\numberwithin{equation}{section}
\newtheorem{thm}{\indent Theorem}[section]
\newtheorem{cor}[thm]{\indent Corollary}
\newtheorem{lem}[thm]{\indent Lemma}
\newtheorem{prop}[thm]{\indent Proposition}
\newtheorem{rmk}{{\indent\bf Remark}}[section]
\newcommand{\lra}{\longrightarrow}
\newcommand{\ol}{\overline}
\newcommand{\td}{\tilde}
\newcommand{\fr}{\frac}
\newcommand{\edd}{\end{document}}
\newcommand{\be}{\begin{equation}}
\newcommand{\ee}{\end{equation}}
\newcommand{\lagl}{\langle}
\newcommand{\ragl}{\rangle}
\newcommand{\lmx}{\left(\begin{matrix}}
\newcommand{\rmx}{\end{matrix}\right)}
\newcommand{\ldt}{\left|\begin{matrix}}
\newcommand{\rdt}{\end{matrix}\right|}
\newcommand{\tr}{{\rm tr\,}}
\newcommand{\vfi}{\varphi}
\newcommand{\veps}{\varepsilon}
\newcommand{\bbr}{{\mathbb R}}
\newcommand{\bbs}{{\mathbb S}}
\newcommand{\ba}{\begin{array}}
\newcommand{\ea}{\end{array}}
\newcommand{\nnm}{\nonumber}
\newcommand{\beal}{\begin{align}}
\newcommand{\eal}{\end{align}}
\newcommand{\bea}{\begin{eqnarray}}
\newcommand{\eea}{\end{eqnarray}}
\newcommand{\id}{{\rm id}}
\newcommand{\pp}[2]{\fr{\partial #1}{\partial #2}}
\newcommand{\emf}{e^{\fr1m|F|^2}}
\newcommand{\eamf}{e^{\fr am|F|^2}}
\newcommand{\ft}{e^{\fr1m|F|^2}(H+F)}
\newcommand{\fta}{e^{\fr am|F|^2}(cH+bF)}
\begin{document}

\title[The mean curvature flow in the Gaussian space]{On the mean curvature flow of submanifolds\\
in the standard Gaussian space $^\dag$}

\author[A.-M. Li]{An-Min Li}

\author[X. X. Li]{Xingxiao Li}

\author[D. Zhang]{Di Zhang $^*$}

\dedicatory{}

\subjclass[2010]{ 
Primary 53B44; Secondary 53B40. }
%
\keywords{ 
Mean curvature flow, Gaussian space, blow-up of the curvature}
\thanks{$^\dag$ Research supported by
National Natural Science Foundation of China (No. 11671121, No. 11871197 and No 11971153).}
\thanks{$^*$ The corresponding author}

\address{An-Min Li\endgraf
Department of Mathematics\endgraf
Sichuan University\endgraf
Chengdu 610065, Sichuan\endgraf
P.R. China}
\email{anminliscu@126.com}

\address{Xingxiao Li\endgraf
School of Mathematics and Information Sciences\endgraf
Henan Normal University\endgraf
Xinxiang 453007, Henan\endgraf
P.R. China}
\email{xxl@henannu.edu.cn}

\address{Di Zhang\endgraf
Department of Mathematics\endgraf
Sichuan University\endgraf
Chengdu 610065, Sichuan\endgraf
P.R. China}
\email{zhangdi5727@163.com}

\begin{abstract}
In this paper, we study the regular geometric behavior of the mean curvature flow (MCF) of submanifolds in the standard Gaussian metric space $(\bbr^{m+p},e^{-|x|^2/m}\ol g)$ where $(\bbr^{m+p},\ol g)$ is the standard Euclidean space and $x\in\bbr^{m+p}$ denotes the position vector. Note that, as a special Riemannian manifold, $(\bbr^{m+p},e^{-|x|^2/m}\ol g)$ has an unbounded curvature. Up to a family of diffeomorphisms on $M^m$, the mean curvature flow we considered here turns out to be equivalent to a special variation of the ``{\em conformal mean curvature flow}\,'' which we have introduced previously. The main theorem of this paper indicates, geometrically, that any immersed compact submanifold in the standard Gaussian space, with the square norm of the position vector being not equal to $m$, will blow up at a finite time under the mean curvature flow, in the sense that either the position or the curvature blows up to infinity; Moreover, by this main theorem, the interval $[0,T)$ of time in which the flowing submanifolds keep regular has some certain optimal upper bound, and it can reach the bound if and only if the initial submanifold either shrinks to the origin or expands uniformly to infinity under the flow. Besides the main theorem, we also obtain some other interesting conclusions which not only play their key roles in proving the main theorem but also characterize in part the geometric behavior of the flow, being of independent significance.
\end{abstract}

\maketitle

\tableofcontents

\section{Introduction}

Let $M$ be a compact manifold of dimension $m\geq2$, and $(\bbr^{m+p},\ol g)$ ($p\geq1$) the Euclidean space with the standard flat metric $\ol g$.
Then the Riemannian manifold $(\bbr^{m+p},e^{-|x|^2/m}\ol g)$, where $|x|$ denotes the standard norm of the position vector $x\in\bbr^{m+p}$, is exactly the standard Gaussian metric space. Thus, for any immersion $x:M\to\bbr^{m+p}$, we have two induced metrics $g:=x^*\ol g$, $\td g=e^{-\frac1m|x|^2}g$ and, accordingly, we also have two mean curvature vectors $H$, $\td H$. A simple computation shows that these two mean curvature vectors are related to each other by
\be\label{tdH}\td H=e^{\frac1m|x|^2}(H+x^\bot).\ee

As is well known, the Gaussian space $(\bbr^{m+p},e^{-|x|^2/m}\ol g)$ plays important roles both in physics and in mathematics; In particular, as a special Riemannian manifold, it has a typical geometric structure of non-constant curvature. Moreover, while the Gaussian metric $e^{-|x|^2/m}\ol g$ is highly symmetric around the origin, the sectional curvature of $(\bbr^{m+p},e^{-|x|^2/m}\ol g)$ is unfortunately unbounded. In fact, under the canonical frame $e_A=\pp{}{x^A}$ $(A=1,2,...,m+p)$ of $\bbr^{m+p}$, if we denote by $\tilde R_{ABCD}$ $(A,B,C,D=1,2,...,m+p)$ the components of the Riemannian curvature tensor of the Gaussian metric $e^{-|x|^2/m}\ol g$, then the sectional curvature $\tilde K(e_A,e_B)$ of the Gaussian space $(\bbr^{m+p},e^{-|x|^2/m}\ol g)$ along the two-dimensional section $[e_A\wedge e_B]$ at any point $x=(x^1,x^2,\cdots x^{m+p})\in\bbr^{m+p}$ with any pair $A,B$, $A\neq B$, can be directly computed as follows:
\begin{align*}
\tilde K(e_A,e_B)=&-\frac{\tilde R_{ABAB}}{\|e_A\wedge e_B\|^2}\\
=&\fr1me^{\frac1m|x|^2}\Big(2+\fr1m(x^A)^2+\fr1m(x^B)^2-\frac1m|x|^2\Big)\\
=&\frac1me^{\frac1m|x|^2}\Big(2-\fr1m\sum_{C\neq A,B}(x^C)^2\Big)\lra-\infty,
\end{align*}
as $\sum_{C\neq A,B}(x^C)^2\lra+\infty$, where $\|e_A\wedge e_B\|$ is the norm of the exterior $2$-vector $e_A\wedge e_B$ induced by the Gaussian metric $e^{-|x|^2/m}\ol g$.

Motivated by the major development of the classical mean curvature flow, we are very interested in and shall initiate the study of the mean curvature flow (MCF) of submanifolds in the Gaussian space $(\bbr^{m+p},e^{-|x|^2/m}\ol g)$. Due to some of the idea proposed in our earlier study on the conformal MCF (see the statement in the introduction of \cite{l-z}), instead of using the original mean curvature $\td H$ directly, we would like to write analytically the MCF in $(\bbr^{m+p},e^{-|x|^2/m}\ol g)$, using the ``{\em standard}\,'' mean curvature $H$, into the following form of initial problem of a degenerate parabolic partial differential equation:
\be\label{flow0}
\begin{cases}
\pp{F}{t}=\td H\equiv\emf(H+F^\bot),\\
F(\cdot,0)\equiv F_0(\cdot),
\end{cases}
\ee
where $|F|^2\equiv\ol g(F,F)$ is the square norm of $F$ with respect to the standard flat metric $\ol g$.

On the other hand, we all know that (\cite{w}, see also \cite{e} for $p=1$) a tangential velocity in any flow of submanifolds does not change the geometric behavior of the flow, for example, the image $F_t(M^m)$ at each moment $t\in[0,T)$. Therefore, the original MCF \eqref{flow0} can be made geometrically equivalent to the following curvature flow:
\be\label{flow}
\begin{cases}
\pp{F}{t}=\ft,\\
F(\cdot,0)\equiv F_0(\cdot).
\end{cases}
\ee
More generally, for later use in the major argument, we need to consider the following problem of ``{\em modified MCF}\,":
\be\label{flow'}
\begin{cases}
\pp{F}{t}=e^{\frac1ma(t)|F|^2}\big(c(t)H+b(t)F\big),\\
F(\cdot,0)\equiv F_0(\cdot),
\end{cases}
\ee
where $a,b,c\in C^\infty[0,T)$, $a>0$, $b>0$, $c>0$. The most important case is that $a,b,c$ are all given constants. By the way, we shall call a smooth solution $F:M^m\times[0,T)\to\bbr^{m+p}$ to \eqref{flow0} or \eqref{flow} or \eqref{flow'} {\em regular} if for each $t\in[0,T)$, the corresponding map $F_t:M^m\to\bbr^{m+p}$ is an immersion, where $F_t(\cdot)=F(\cdot,t)$; Such a regular solution $F$ is called {\em maximal} if the time interval $[0,T)$ is maximal.

To give our present research a sound background, a brief review here of the well studied classical MCF of submanifolds, especially in the real space forms, seems rather necessary.

As indicated in the literature, in order to describe the formation of grain boundaries in annealing metals, the MCF was first studied in 1956 by Mullins (\cite{mull}). Later, Brakke (\cite{bra}) introduced the motion of a submanifold in arbitrary codimension driven by the mean curvature vector, and constructed a generalized varifold solution for all time. Since then there have been many interesting results on the MCF, especially for hypersurfaces in Euclidean space, the most important case. To cite a few, Huisken (\cite{hui84}) showed that, under the MCF, any compact and uniformly convex hypersurface in the Euclidean space is convergent to a ``round point" in finite time. More generally in the case of higher codimension, Andrews and Baker proved (\cite{a-b} or \cite{b}) that, for a compact initial submanifold of the MCF in the Euclidean space, if its second fundamental form and mean curvature satisfy a suitable pinching condition, then the corresponding moving submanifold must be convergent to a ``round point" in finite time. This last theorem was later generalized to MCFs in other two non-flat space forms, see Baker (\cite{b}) and Liu-Xu-Ye-Zhao (\cite{l-x-y-z11}, \cite{l-x-y-z}). It should be remarked that, in order to extend the result in \cite{hui86} by Huisken for the MCF of hypersurfaces in any more general Riemannian manifold $(N,\ol g)$ with bounded geometry, and the result in \cite{a-b} by Andrews and Baker for the MCF in Euclidean space, Liu, Xu and Zhao considered in \cite{l-x-z12} the MCF of submanifolds with higher codimension in the geometrically bounded Riemannian manifold $(N,\ol g)$ and proved a general convergence theorem. As for other interesting progresses on the MCFs, we refer the readers to, for example, the references \cite{andr}, \cite{b-n}, \cite{l-x}, \cite{p-s-x}, \cite{smo11} and \cite{w} etc.

Now come back to the curvature flow \eqref{flow}. We remark that the flow \eqref{flow} turns out to be another special interesting case of the so-called ``{\em modified mean curvature flow with an external force}\,'' (see (1.1) in \cite{l-z}). Besides the standard MCF and \eqref{flow}, this modified mean curvature flow with an external force also has some other important special cases that have been extensively studied by many authors. For example, the MCF with density (see \cite{b-r10} and \cite{b-r14} for Gaussian MCF in real space forms);  that in the Euclidean space $\bbr^n$ with the external force in the direction of position vector (\cite{g-l-w}, \cite{s-s}); and some more general flows in $\bbr^n$ when the external force is taken to be the gradient of certain smooth functions $\psi\in C^\infty(\bbr^n)$ (\cite{l-s}, \cite{l-j07} and \cite{l-j08}), which are related to the study of the Ginzburg-Landau vortex (\cite{j-x03} and \cite{j-l06}), or when the external force is chosen to be a closed conformal vector field (\cite{a-l-r} and \cite{mntl}).

As the first article in our study on the MCF of submanifolds in the Gaussian space, the present paper will focus on the general evolution trend of the moving submanifold within the regular time interval. In other words, we shall make some relevant characterizations on how the first singularity of the possible solution of \eqref{flow} comes.

Now our main theorem in the present paper can be stated as follows:

\begin{thm}[The main theorem]\label{main1}
Let $F_0:M^m\to\bbr^{m+p}$ be an immersed compact submanifold satisfying $|F_0|^2\neq m$ everywhere on $M^m$. Denote
$$
\hat T_1:=\fr m{2(m-\max|F_0|^2)}\left(1-e^{-\frac1m\max|F_0|^2}\right),\quad \hat T_2:=\fr m{2(\min|F_0|^2-m)}e^{-\frac1m\min|F_0|^2}.
$$
Then there exists a maximal regular solution $F:M^m\times[0,T)\to\bbr^{m+p}$ to the flow \eqref{flow}. Furthermore,

(1) If $\min|F_0|^2<m$, then we have either
$$T<\hat T_1 \text{\ \ and\ \ }\lim\limits_{t\to T}\max|h|^2=+\infty$$
or
$$T=\hat T_1 \text{\ \ and\ \ }\lim\limits_{t\to\hat T_1}\max|F|^2=0,$$
namely, $F_t(M^m)$ converges to the origin as $t$ tends to $\hat T_1$;

(2) If $\max|F_0|^2>m$, then we have either $T<\hat T_2$ and one of the following two holds:

(i) $\lim\limits_{t\to T}\max|h|^2=+\infty$;

(ii) $\lim\limits_{t\to T}\max|F|^2=+\infty$;
\\
or $T=\hat T_2$ and $\lim\limits_{t\to\hat T_2}\min|F|^2=+\infty$.
\end{thm}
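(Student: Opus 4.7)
The plan is to combine a maximum principle analysis of $|F|^2$—which yields the explicit time thresholds $\hat T_1$ and $\hat T_2$—with a standard extension criterion for the parabolic system \eqref{flow}. I would first establish short-time existence: writing the flow \eqref{flow} as $\partial_t F = e^{|F|^2/m}\Delta F + e^{|F|^2/m}F$ (via the identity $\Delta F = H$), the equation is quasilinear strictly parabolic, with positive leading coefficient $e^{|F|^2/m}$ and a smooth zeroth-order drift; standard theory for compact $M^m$ with smooth $F_0$ therefore produces a unique maximal smooth solution $F:M^m\times[0,T)\to\bbr^{m+p}$.

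A direct computation using $|\nabla F|^2 = m$ gives the scalar evolution
$$(\partial_t - e^{|F|^2/m}\Delta)|F|^2 = 2e^{|F|^2/m}(|F|^2 - m).$$
Setting $\rho := |F|^2$, the maximum principle applied at spatial extrema yields $\dot\rho_{\max} \leq 2e^{\rho_{\max}/m}(\rho_{\max}-m)$ and $\dot\rho_{\min} \geq 2e^{\rho_{\min}/m}(\rho_{\min}-m)$. In Case (1), with $\rho_0:=\max|F_0|^2<m$, the first inequality forces $\rho_{\max}$ to be strictly decreasing, so $\rho_{\max}(t)\leq\rho_0<m$ throughout. Passing to the auxiliary quantity $u:=e^{-|F|^2/m}$, one has $\min_x u = e^{-\rho_{\max}/m}$, and the chain rule together with the bound on $\dot\rho_{\max}$ gives
$$\fr{d}{dt}\min_x u \geq \fr{2(m-\rho_{\max})}{m} \geq \fr{2(m-\rho_0)}{m}.$$
Integration yields $\min_x u(t)\geq e^{-\rho_0/m}+\fr{2(m-\rho_0)}{m}\,t$. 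Regularity of the flow forces $\min_x u<1$ (otherwise $F\equiv 0$ at that instant, contradicting the immersion property), so $T\leq\hat T_1$; if $T=\hat T_1$ the bound becomes tight and forces $\min_x u(t)\to 1$, equivalently $\max|F|^2\to 0$. Case (2) is entirely symmetric, using $\max_x u = e^{-\rho_{\min}/m}$, the inequality $\fr{d}{dt}\max_x u\leq -\fr{2(\rho_m-m)}{m}$ for $\rho_m := \min|F_0|^2>m$, and the fact that $\max_x u\geq 0$; this yields $T\leq\hat T_2$ and, in the equality case, $\min|F|^2\to+\infty$.

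To conclude, I would invoke a standard extension criterion for \eqref{flow}: if $T<\infty$ is maximal, then at least one of $|F|$ or $|h|$ must be unbounded on $M^m\times[0,T)$, because uniform bounds on both $|F|$ and $|h|$ bootstrap via Bernstein- or Moser-type integral estimates to $C^\infty$ bounds on all covariant derivatives of $h$, permitting smooth extension of the flow past $T$. In Case (1), $|F|^2\leq\rho_0<m$ is automatic, so $\max|h|^2\to+\infty$ is the only available singularity for $T<\hat T_1$; and if $T=\hat T_1$ the maximum-principle analysis already gives $\max|F|^2\to 0$. In Case (2), $|F|^2$ is not a priori bounded above, so for $T<\hat T_2$ either $\max|h|^2\to+\infty$ or $\max|F|^2\to+\infty$ may occur; while $T=\hat T_2$ forces $\min|F|^2\to+\infty$. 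The principal technical obstacle I anticipate is the extension criterion itself: the non-constant coefficient $e^{|F|^2/m}$ and the position-dependent drift $e^{|F|^2/m}F$ introduce extra terms in the evolution equations for $|h|^2$ and $|\nabla^k h|^2$ that must be carefully absorbed into the integral estimates, extending the classical MCF bootstrap arguments to this Gaussian setting.
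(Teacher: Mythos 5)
Your maximum-principle argument on $u=e^{-|F|^2/m}$ is a clean, direct route to the thresholds $\hat T_1,\hat T_2$, and it is genuinely different from the paper's approach. The paper (Sections~4 and~6) constructs an explicit family of origin-centered sphere solutions via the ODE \eqref{R'}, proves a comparison principle (Proposition~\ref{prop4.8}) sandwiching $|F_t|^2$ between such sphere radii, and then passes to a limit in the initial radius to obtain the sharp bound $T\le\hat T_1$ (resp.\ $\hat T_2$). Your scalar argument --- apply Hamilton's maximum principle to the evolution $(\partial_t-\emf\Delta)|F|^2=2\emf(|F|^2-m)$ at a spatial extremum, substitute $u=e^{-|F|^2/m}$, and integrate the resulting linear differential inequality --- arrives at the same bounds and the same forced asymptotics in the equality case, with far less machinery. (Your implicit use of connectedness to turn ``$\min|F_0|^2<m$'' into ``$\max|F_0|^2<m$'' matches the paper's first step in Theorem~\ref{thm6.1}.) The trade-off is that the paper's comparison route delivers, essentially for free, the independently interesting Theorem~\ref{thm4.5} and Corollary~\ref{cor4.6} on spherical solutions.

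There is, however, a genuine gap in your short-time existence step. You write the flow as $\partial_tF=\emf\Delta F+\emf F$ and call it ``quasilinear strictly parabolic, with positive leading coefficient $\emf$.'' This is incorrect: $\Delta$ here is the Laplacian of the induced metric $g=F^*\ol g$, which itself depends on $F$ and its first derivatives, and the resulting operator $F\mapsto\Delta_{g_F}F$ has a \emph{degenerate} principal symbol --- precisely because the flow \eqref{flow} is invariant under reparametrizations of $M^m$, so the tangential directions are null directions for the parabolic operator. This is the same degeneracy as in the ordinary mean curvature flow and the Ricci flow, and it is exactly why the paper devotes Section~\ref{s2} to the DeTurck trick: one introduces the nondegenerate DeTurck flow \eqref{dtmcf}, solves it by standard strictly parabolic theory, and transports the solution back via Lemma~\ref{tmdptfl} to obtain Theorem~\ref{exiuni}. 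Your proof cannot simply invoke ``standard theory'' for a strictly parabolic system; it needs this (or an equivalent gauge-fixing) step.

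Finally, a smaller remark on the extension criterion. You correctly identify it as the main technical obstacle and sketch it via Bernstein/Moser-type integral estimates. The paper takes a different technical route to Theorem~\ref{thm5.1}: it proves pointwise bounds $|\nabla^lh|^2\le C^0_l(1+t^{-l})$ by applying the maximum principle to $G_l=t^l|\nabla^lh|^2+lt^{l-1}|\nabla^{l-1}h|^2$ (Proposition~\ref{prop5.7}), transfers these to bounds on $\mathring\nabla^l F$ in a fixed background metric via control of the connection difference $\mathring T$ (Lemmas~\ref{lem5.9}--\ref{lem5.13}, Proposition~\ref{prop5.11}), and closes with a Cauchy-in-$C^\infty$ argument. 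Both strategies should work, but note that the bound $|F|^2\le\max|F_0|^2$ you already have in Case~(1) (and the a priori assumption $|F|$ bounded in the contradiction argument) is exactly what keeps the coefficient $\emf$ under control; without it the ``extra terms'' you mention from the non-constant coefficient would not be absorbable.
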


We shall also prove some other characterizations of the flow \eqref{flow}. For example, a blow-up theorem as

\begin{thm}[see Theorem \ref{thm5.1}]\label{main2}
Let $F:M^m\times[0,T)\to\bbr^{m+p}$ be a maximal regular solution of the curvature flow \eqref{flow} such that $|F_0|^2\neq m$ everywhere. Then $T<+\infty$ and either
$$\lim\limits_{t\to T}\max|F|^2=+\infty,\quad\text{or}\quad \lim\limits_{t\to T}\max|h|^2=+\infty,$$
\end{thm}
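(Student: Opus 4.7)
I would split the argument into two independent stages: (A) show that the maximal time $T$ is finite, using only the evolution of $|F|^2$ at spatial extrema; and (B) prove the blow-up dichotomy by the standard "Huisken extension" argument, assuming toward contradiction that both $|F|^2$ and $|h|^2$ remain bounded on $[0,T)$.

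For (A), I would differentiate $|F|^2$ along \eqref{flow} and combine the result with the flat identity $\Delta|F|^2=2m+2\ol g(F,H)$ to obtain
$$\pp{}{t}|F|^2=2e^{|F|^2/m}\bigl(\ol g(F,H)+|F|^2\bigr)=e^{|F|^2/m}\bigl(\Delta|F|^2+2(|F|^2-m)\bigr).$$
The hypotheses ($M^m$ compact, $|F_0|^2\neq m$ everywhere) force, on each connected component, either $\max|F_0|^2<m$ or $\min|F_0|^2>m$. Hamilton's max/min principle applied at a spatial extremum of $|F|^2$ then gives
$$\fr{d}{dt}\max|F|^2\leq 2e^{\max|F|^2/m}(\max|F|^2-m),\quad \fr{d}{dt}\min|F|^2\geq 2e^{\min|F|^2/m}(\min|F|^2-m),$$
and comparison with the scalar ODE $y'=2e^{y/m}(y-m)$ drives $\max|F|^2\to0$ within time $\hat T_1$ on a "shrinking" component, or $\min|F|^2\to+\infty$ within time $\hat T_2$ on an "expanding" one. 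Either outcome forces loss of regularity of $F_t$, so $T\leq\max(\hat T_1,\hat T_2)<+\infty$.

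For (B), I would assume for contradiction that there is a constant $K>0$ with $|F|^2,|h|^2\leq K$ on $M^m\times[0,T)$. A uniform bound on $|h|$ keeps the induced metrics $g_t=F_t^*\ol g$ uniformly equivalent on $[0,T)$, and the coefficient $e^{|F|^2/m}$ of the leading term of \eqref{flow} is then uniformly bounded above and below, so \eqref{flow} is uniformly parabolic. Deriving the evolution equations of the covariant derivatives $\nabla^kh$, which schematically read
$$\pp{}{t}\nabla^kh=e^{|F|^2/m}\Delta\nabla^kh+\bigl(\text{polynomial in }F,h,\nabla h,\ldots,\nabla^{k-1}h\bigr),$$
and running a Bernstein/Shi-type iteration gives uniform bounds on every $|\nabla^kh|$ and in turn on all spatial derivatives of $F$. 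Consequently $F_t\to F_T$ in $C^\infty(M^m,\bbr^{m+p})$ to a smooth immersion, and short-time existence for \eqref{flow} starting from $F_T$ would extend the solution past $T$, contradicting the maximality of $[0,T)$.

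The main obstacle is the Shi-type higher-derivative estimate in this weighted setting with a non-zero-order term: the exponential factor $e^{|F|^2/m}$ and the extra $F$ in the speed produce new contributions to the commutators $[\pp{}{t},\nabla^k]$ involving powers of $F$ and $|F|^2$, which must be carefully bookkept. Thanks to the standing hypothesis that both $|F|$ and $|h|$ remain bounded, these new terms are uniformly controlled, so Huisken's classical Bernstein argument carries over with only notational modification; the $|h|$-bound supplies the uniform ellipticity, and the $|F|$-bound tames all exponential and polynomial factors appearing in the derivation.
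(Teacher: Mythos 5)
Your proposal is correct and follows essentially the same two-stage strategy as the paper. For stage (A), your direct Hamilton-trick ODE comparison, $\frac{d}{dt}\max|F|^2\leq 2e^{\max|F|^2/m}(\max|F|^2-m)$, is the same computation as the paper's comparison with origin-centered spherical barriers (Proposition \ref{prop4.8} and Corollary \ref{cor4.9}): the spherical solution is precisely the comparison ODE $y'=2e^{y/m}(y-m)$, and the paper packages the argument via the PDE $\partial_t(|F|^2-R^2)$ rather than at a spatial extremum, but the content is identical. For stage (B), your bootstrap — assume $|F|$ and $|h|$ bounded, derive all $|\nabla^kh|$ bounds by a Bernstein/Shi-type iteration, get uniform equivalence of the induced metrics, and conclude $C^\infty$ convergence to a limit immersion that contradicts maximality via short-time existence — is exactly the chain Proposition \ref{prop5.7}, Corollary \ref{cor5.8}, Lemma \ref{lem5.10}, Proposition \ref{prop5.11}, Lemma \ref{lem5.13} in the paper. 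One minor imprecision: in the expanding case, $\min|F|^2\to+\infty$ does not in itself entail ``loss of regularity'' — the maps may stay immersed — but rather forces $T\le\hat T_2$ because a compact immersion at any $t\ge\hat T_2$ would contradict the lower barrier; this is how the paper uses the comparison, and your argument works once phrased that way.
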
\noindent
and a theorem characterizing how the spherical submanifolds evolve under the flow \eqref{flow'}:

\begin{thm}[see Theorem \ref{thm4.5}]\label{main3}
Let $F:M^m\times[0,T)\to\bbr^{m+p}$ be a maximal regular solution of the flow \eqref{flow'} with both $a$ and $b$ being constant. If the initial submanifold $F_0:M^m\to\bbr^{m+p}$ is contained in a standard hypersphere $\bbs^{m+p-1}(R_0)$ of radius $R_0$ which is centered at the origin, i.e., $|F_0|\equiv R_0$, then $F_t(M^m)$ is always kept on a likewise standard hypersphere. Furthermore,

(1) If $(b|F|^2-mc)(0)<0$ and $c'\geq 0$, then
\be\label{t1}
T\leq T_1:=
\fr m{2ab\left(\fr{c(0)}bm-R^2_0\right)}\left(1-e^{-\frac{a}mR_0^2}\right).
\ee
Furthermore, in the case that $T=T_1$, $F_t(M^m)$ will be convergent to the origin as $t\to T_1$;

(2) If $(b|F|^2-mc)(0)>0$ and $c'\leq 0$, then
\be\label{t2}
T\leq T_2:=\fr m{2 ab\left(R^2_0-\fr{c(0)}bm\right)}e^{-\frac{a}mR_0^2}.
\ee
Furthermore, in the case that $T=T_2$, it must hold that $\lim\limits_{t\to T_2}|F_t|^2=+\infty$;

(3) If  $(b|F|^2-mc)(0)=0$ and $c$ is constant, then $T=+\infty$ and $F_t(M^m)$ is always kept on the fixed standard hypersphere $\bbs^{m+p-1}(R_0)$ for all $t\geq 0$.
\end{thm}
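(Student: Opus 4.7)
The plan is to reduce the geometric problem to a scalar ODE by proving that the flow preserves the spherical structure, and then to analyze the ODE case by case.

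\emph{Step 1: evolution of $\rho:=|F|^2$.} Differentiating $\rho$ along \eqref{flow'} gives
\begin{equation*}
\partial_t\rho = 2\langle F,\partial_t F\rangle = 2e^{\frac{a}{m}\rho}\bigl(c\langle F,H\rangle + b\rho\bigr).
\end{equation*}
Combining this with the standard identities $|\nabla F|^2 = m$ and $\Delta F = H$ for any isometric immersion (the convention consistent with the factor $m$ appearing in the hypotheses), one has $\Delta\rho = 2m + 2\langle F,H\rangle$. Eliminating $\langle F,H\rangle$ produces the reaction--diffusion equation
\begin{equation*}
\partial_t\rho = c\,e^{\frac{a}{m}\rho}\Delta\rho + 2e^{\frac{a}{m}\rho}(b\rho - mc).
\end{equation*}

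\emph{Step 2: spherical invariance via the maximum principle.} Set $\rho_{\max}(t):=\max_{M^m}\rho(\cdot,t)$ and $\rho_{\min}(t):=\min_{M^m}\rho(\cdot,t)$. At any point realizing $\rho_{\max}$ one has $\Delta\rho\le 0$, and since $ce^{a\rho/m}>0$ the diffusion term is nonpositive; Hamilton's trick then yields
\begin{equation*}
\tfrac{d}{dt}\rho_{\max}(t)\le 2e^{\frac{a}{m}\rho_{\max}}\bigl(b\rho_{\max}-mc(t)\bigr),
\end{equation*}
together with the reverse inequality for $\rho_{\min}$. Comparison against the scalar ODE
\begin{equation*}
\phi'(t) = 2e^{\frac{a}{m}\phi}(b\phi - mc(t)),\qquad \phi(0) = R_0^2,
\end{equation*}
sandwiches $\rho_{\max}\le\phi\le\rho_{\min}$; combined with the trivial $\rho_{\min}\le\rho_{\max}$ this forces $\rho(\cdot,t)\equiv\phi(t)$, i.e.\ $F_t(M^m)\subset\bbs^{m+p-1}(\sqrt{\phi(t)})$.

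\emph{Step 3: analysis of the scalar ODE.} In Case (1) the sign hypothesis makes $\phi$ strictly decreasing, so $\phi(t)\le R_0^2$ and $c(t)\ge c(0)$ jointly give $mc(t)-b\phi(t)\ge mc(0)-bR_0^2>0$ along the trajectory, hence
\begin{equation*}
T\le \int_0^{R_0^2}\frac{d\phi}{2e^{\frac{a}{m}\phi}(mc(t)-b\phi)}\le \frac{1}{2(mc(0)-bR_0^2)}\int_0^{R_0^2}e^{-\frac{a}{m}\phi}\,d\phi = T_1.
\end{equation*}
If $T=T_1$ the inequalities are saturated and $\phi(T)=0$, so the sphere collapses to the origin. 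Case (2) is the mirror image: $\phi$ strictly increases, $c(t)\le c(0)$ and $\phi(t)\ge R_0^2$ give $b\phi(t)-mc(t)\ge bR_0^2-mc(0)$, producing $T\le T_2$ and, in the saturation case, $\phi(t)\to +\infty$ as $t\to T_2$. In Case (3) the initial datum $\phi(0)=R_0^2=mc/b$ sits at an exact equilibrium of the ODE (since $c$ is constant), so the radial dynamics is trivial, $\phi\equiv R_0^2$, and the sphere is permanently fixed, giving $T=+\infty$.

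\emph{Main obstacle.} The heart of the argument is Step 2: I must carefully justify the parabolic maximum principle (Hamilton's trick) for a reaction--diffusion equation posed on the manifold $M^m$ whose induced metric $g(t)=F_t^*\bar g$ evolves with $t$, and verify the ODE comparison over the full lifetime $[0,T)$ despite the nonconstant coefficient $c(t)$; this is handled by the standard Hamilton trick together with a Gronwall-type comparison. Once spherical invariance is in place, the remainder reduces to elementary calculus, the crucial point being to replace the variable quantity $mc(t)-b\phi(t)$ by its worst-case constant $mc(0)-bR_0^2$ using the monotonicity of $c$ and $\phi$, which is exactly what produces the stated $T_1$ and $T_2$.
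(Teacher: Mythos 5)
Your proof is essentially correct and takes a genuinely different, more elementary route for the crucial spherical-invariance step. The paper derives a vector-valued parabolic evolution equation for $\nabla|F|^2$ (Lemma \ref{lem4.4}), observes that both $X\equiv 0$ and $X=\nabla|F|^2$ solve it, and concludes $\nabla|F_t|^2\equiv 0$ by uniqueness of the PDE solution (Proposition \ref{prop4.4}). Your Step 2 sidesteps this machinery entirely: applying Hamilton's trick to the scalar reaction--diffusion equation for $\rho=|F|^2$ at both $\rho_{\max}$ and $\rho_{\min}$, and comparing each to the scalar ODE solution $\phi$ with the same initial datum, you sandwich $\rho_{\max}\le\phi\le\rho_{\min}$ and conclude $\rho\equiv\phi$. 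This is cleaner and avoids the lengthy computation of Lemma \ref{lem4.4}. Your Step 3 bounds the lifetime by the change of variables $dt=d\phi/\phi'$ and estimating the integrand by its worst-case constant, whereas the paper integrates the equivalent form $(e^{-aR^2/m})'=-\tfrac{2ab}m(R^2-\tfrac cbm)$ and invokes Propositions \ref{prop4.1}--\ref{prop4.2}; both give the same $T_1$, $T_2$. You should, however, make explicit that $c(t)$ in your $d\phi$-integral means $c\circ t(\phi)$ via the inverse of the strictly monotone $\phi$, and that the leading bound $T\le\int_0^{R_0^2}\cdots$ rests on the observation that $\phi(t)=|F_t|^2$ must remain positive for $t<T$, forcing $T$ not to exceed the time at which $\phi$ reaches $0$.

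Regarding Case (3), your jump from ``the sphere is permanently fixed'' to ``$T=+\infty$'' is a genuine gap: when $F_0(M^m)$ is a \emph{proper} submanifold of $\bbs^{m+p-1}(R_0)$ and $bR_0^2=mc$, the normal velocity $e^{aR_0^2/m}(cH+bF^\bot)$ reduces, after splitting $H$ into its spherical and radial parts, to a positive constant times the mean curvature of $F_t(M^m)$ \emph{inside} $\bbs^{m+p-1}(R_0)$, and there is no a priori reason such an intrinsic spherical MCF is immortal. The paper's own remark after Theorem \ref{thm4.5} attributes Case (3) solely to ODE uniqueness and therefore shares this lacuna; so it is not a defect particular to your argument, but you should not present $T=+\infty$ as following from the radial computation alone.
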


Theorem \ref{main3} apparently has the following corollary which characterizes how a standard hypersphere evolves.

\begin{cor}[see Corollary \ref{cor4.6}]
Let $F:M^{n-1}\times[0,T)\to\bbr^n$ be a maximal regular solution to the flow \eqref{flow'} with $a,b,c$ all being constant. Suppose that the initial submanifold $F_0:M^{n-1}\to\bbr^n$ of \eqref{flow'} is a standard hypersphere centered at the origin, then $F_t:M^{n-1}\to\bbr^n$ remains a likewise standard hypersphere for each $t\in [0,T)$. Moreover,

(1) If $b|F_0|^2<(n-1)c$ and $T_1$ is given by \eqref{t1} with $R_0=|F_0|$ and $m=n-1$, then $T=T_1$ and $\lim\limits_{t\to T_1}|F_t|^2=0$;

(2) If $b|F_0|^2>(n-1)c$ and $T_2$ is given by \eqref{t2} with $R_0=|F_0|$ and $m=n-1$, then $T=T_2$ and $\lim\limits_{t\to T_2}|F_t|^2=+\infty$;

(3) If $b|F_0|^2=(n-1)c$, then $T=+\infty$ and $F_t\equiv F_0$ for all $t\geq 0$.
\end{cor}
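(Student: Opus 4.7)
My plan is to specialize Theorem \ref{main3} to the situation where $F_0$ parametrizes the entire hypersphere; in this case the flow reduces to a single autonomous ODE for the radius $R(t)$, and the three parts of the corollary correspond to the three sign possibilities for the initial datum of that ODE.

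First, since $F_0(M^{n-1})=\bbs^{n-1}(R_0)$ is in particular contained in $\bbs^{n-1}(R_0)$, I would invoke Theorem \ref{main3} (with $m=n-1$, $p=1$) to conclude $F_t(M^{n-1})\subset\bbs^{n-1}(R(t))$ for some smooth positive function $R(t)$ on $[0,T)$. Because $F_t$ is a regular immersion of the compact connected manifold $M^{n-1}$ into the equidimensional sphere $\bbs^{n-1}(R(t))$, its image is both open (by the inverse function theorem) and closed (by compactness), and hence equals the whole sphere. Using $H=-\tfrac{n-1}{R^2}F$ for the round sphere together with the spatial constancy $|F_t|^2\equiv R(t)^2$, the flow equation \eqref{flow'} collapses to $\partial_t F=e^{\frac{a}{n-1}R^2}\bigl(b-\tfrac{(n-1)c}{R^2}\bigr)F$, and pairing with $F$ yields the autonomous ODE
\begin{equation*}
\frac{d}{dt}R(t)^2=2\,e^{\frac{a}{n-1}R(t)^2}\bigl(bR(t)^2-(n-1)c\bigr).
\end{equation*}

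Next I would handle the three cases. In case (3), $bR_0^2=(n-1)c$ makes the right-hand side vanish at $t=0$; by uniqueness $R(t)\equiv R_0$, and substituting back into the flow equation gives $\partial_t F\equiv 0$, hence $F_t\equiv F_0$ and $T=+\infty$. In case (1), the sign $bR_0^2-(n-1)c<0$ is preserved along the evolution (since $R^2$ decreases), and the estimate $\frac{d}{dt}R^2\leq -2((n-1)c-bR_0^2)<0$ (using $e^{aR^2/(n-1)}\geq 1$) forces $R^2\to 0$ in finite time; as long as $R(t)>0$ the map $F_t$ is a regular immersion, so $T$ equals this hitting time and $\lim_{t\to T}|F_t|^2=0$. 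Case (2) is symmetric: the sign is positive and preserved, the ODE yields $\frac{d}{dt}R^2\geq e^{aR^2/(n-1)}$ for $R^2$ large enough, hence $R^2\to+\infty$ in finite time and $\lim_{t\to T}|F_t|^2=+\infty$.

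The main computational obstacle is to identify these finite singular times with the explicit closed-form values $T_1$ of \eqref{t1} and $T_2$ of \eqref{t2}. I anticipate doing this by observing that the inequality behind the bound in Theorem \ref{main3} becomes an identity for the full-sphere evolution, because $|F|^2$ is then spatially constant and the spatial-extremum estimates in that proof reduce to exact statements for the ODE above; integrating from the initial radius $R_0^2$ to the endpoint $R^2=0$ (resp.\ $R^2=+\infty$) should then reproduce $T_1$ (resp.\ $T_2$).
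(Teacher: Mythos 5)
Your overall scheme — reducing to the scalar ODE~\eqref{R} for $R^2(t)$ via Theorem~\ref{main3} and sorting the three cases by the sign of $bR_0^2-(n-1)c$ — matches the paper's approach, which additionally notes that round hyperspheres have no geometric singularities of their own, so the regular lifespan of the flow is governed by the ODE. The genuine gap is the step you yourself flag as the ``main computational obstacle,'' and your anticipated way of closing it would fail.

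In the derivation of $T_1$ preceding Theorem~\ref{thm4.5}, the estimate $\bigl(e^{-aR^2/m}\bigr)'=\frac{2ab}{m}\bigl(\frac{cm}{b}-R^2\bigr)>\frac{2ab}{m}\veps$ (with $\veps<\frac{cm}{b}-R_0^2$) is integrated and then $\veps\to\frac{cm}{b}-R_0^2$ is taken. The spatial constancy of $|F|^2$ plays no role in making this sharp: what you would need is $\frac{cm}{b}-R^2(t)\equiv\frac{cm}{b}-R_0^2$ for all $t$, and that is false, because $R^2(t)$ strictly decreases, so $\frac{cm}{b}-R^2(t)>\frac{cm}{b}-R_0^2$ for every $t>0$. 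Integrating the exact ODE gives the collapse time
\begin{equation*}
\delta=\int_0^{R_0^2}\frac{e^{-as/m}\,ds}{2(mc-bs)}
<\frac{1}{2(mc-bR_0^2)}\int_0^{R_0^2}e^{-as/m}\,ds=T_1,
\end{equation*}
so ``the inequality becomes an identity'' is simply not true, and your plan of integrating from $R_0^2$ to $0$ will not reproduce the closed-form $T_1$ (nor will the symmetric computation reproduce $T_2$). Be aware that the paper's own proof of Case~(1) asserts without derivation that ``for every $t<T_1$, $F_t$ remains an immersion'' and then cites Theorem~\ref{thm4.5}(1), which only supplies $T\le T_1$ together with a conditional converse; the identification $T=T_1$ is exactly the step left implicit there, so reading the paper's proof will not by itself tell you how to close this.
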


\begin{rmk}\rm
For the MCF in the Gaussian space $(\bbr^{m+p},e^{-|x|^2/m}\ol g)$, the most natural and interesting problem for us to study, could be the singular behavior of the limit submanifold of the solution to the MCF \eqref{flow}, on which either the curvature or the position would blow up as is concluded in Theorem \ref{main2}. This problem seems a little more subtle but surely attracting. We reasonably expect that, under some necessary geometric or analytic conditions on the initial submanifold, the regular submanifolds $F_t$ ($t\in[0,T)$) given in the flow, possibly after some suitable rescaling and/or a suitable reparameterization of time, will have a geometrically simple limit, as is seen in the most theorems obtained for the MCF of submanifolds in the real space forms, particularly in the standard Euclidean space.
\end{rmk}

To end the introduction, we shall brief the organization of this paper. In Section 2, by using the well-known trick of DeTurck, we reiterate the process similar to what we did in \cite{l-z}, showing the short-time existence and uniqueness of the solution to the curvature flow \eqref{flow'} (see Theorem \ref{exiuni} in section \ref{s2}). Section 3 contains only some direct computations that give us the necessary evolution equations for a number of basic quantities. Section 4 starts the main part of the paper, providing a few key conclusions that characterize in part the properties of the concerned flow, which will also be used in the proof of the main theorem. Section 5 mainly concerns the key estimates together with the argument to reach a blow-up theorem. In Section 6, the final one, we shall combine all discussions done earlier to complete the proof of the main theorem (see Theorem \ref{thm6.1}).

\section{The short-time existence and the uniqueness of the solution}\label{s2}

Same as the standard MCF we have in the literature, Equation \eqref{flow'} is generally a degenerate parabolic partial differential equation. By using the
``{\em DeTurck trick}\,'' (see, for example, \cite{b}, \cite{dtu} etc), we are to prove in this section the short-time existence and uniqueness of solution of \eqref{flow'} for each initial immersion $F_0:M^m\to\bbr^{m+p}$.

First we need to make some preparations for notations. For a given local coordinate system $(u^i)$ on $M$ and a smooth real-valued or vector-valued function $f$, write
$$e_i:=\pp{}{u^i},\quad\omega^i:=du^i,\quad  f_i:=\pp{}{u^i}(f)\equiv\pp{f}{u^i}.$$
Moreover, as done in \cite{l-z}, we always denote by $\nabla^g$ the Levi-Civita connection of any given Riemannian metric $g$ on $M^m$. Now arbitrarily fix a metric $\mathring g$ and then define a metric-dependent vector field $W\equiv W(g)$ on $M$ by $W(g):=\tr_g(\nabla^g-\nabla^{\mathring g})$. In particular, if $F:M^m\to\bbr^{m+p}$ is an immersion, then we get an immersion-dependent vector field $W(F)\equiv W(g_F)$ where $g_F$ is the induced metric via $F$ of the standard metric $\ol g$ on $\bbr^{m+p}$.

Introduce the following {\em DeTurck MCF} for the flow \eqref{flow'}:
\be\label{dtmcf}
\pp{\hat F}{t}=e^{\fr am|\hat F|^2}(cH_{\hat F}+c\hat F_*W(\hat F)+b\hat F),
\ee
where $H_{\hat F}$ is the mean curvature of the time-dependent immersion $\hat F_t:M^m\to(\bbr^{m+p},\ol g)$. For the given coordinate system $(u^i)$ on $M$, it holds that
$$
F_{t*}(e_i)=e_i(F)\equiv F_i,\quad \hat F_{t*}(e_i)=e_i(\hat F)\equiv\hat F_i,
$$
where $F$ and $\hat F$ on the right hand side are viewed as $\bbr^{m+p}$-valued functions. Denote respectively by $\Gamma^k_{ij}$ and $\mathring\Gamma^k_{ij}$ the Christoffel symbols for $g$ and $\mathring g$ and write
$$
W(g)=W^ke_k=g^{ij}(\Gamma^k_{ij}-\mathring\Gamma^k_{ij})e_k.
$$
Then the two flows \eqref{flow'} and \eqref{dtmcf} have the following local representations respectively:
\be\label{cfmcf1}
\pp{F^A}{t}=\eamf\left(cg_F^{ij}F^A_{,ij}+bF^A\right)
\ee
\be\label{dtmcf1}
\pp{\hat F^A}{t}=e^{\fr am|\hat F|^2}\left(cg_{\hat F}^{ij}\hat F^A_{,ij}+c\hat F^A_kW^k(\hat F)+b\hat F^A\right)\equiv e^{\fr am|\hat F|^2}\left(cg_{\hat F}^{ij}\hat F^A_{;ij}+b\hat F^A\right),
\ee
where the subscript ``$,$\,'' denotes the covariant derivatives w.r.t. the induced metric $g_F$ or $g_{\hat F}$ accordingly, while the subscript $``;$\,'' denotes those w.r.t. the fixed metric $\mathring g$. From \eqref{dtmcf1} it is clearly seen that \eqref{dtmcf} is a (nondegenerate) parabolic equation and thus has a short-time existing solution $\hat F=\hat F(u,t)$, $(u,t)\in M^m\times[0,T)$, according to the standard theory of parabolic equations. So we have a well-defined time-dependent vector field $W=W(\hat F)$ on $M$ for all $t\in[0,T)$.

Now we recall a known existence result as follows:

\begin{lem}[see for example \cite{c-k}, p.82, Lemma 3.15]\label{tmdptfl} If $\{X_t:0\leq t<T\leq\infty\}$ is a continuous time-dependent
family of vector fields on a compact manifold $M$, then there exists uniquely a one-parameter
family of diffeomorphisms
$$\{\vfi_t:M\to M; 0\leq t<T\leq\infty\}$$
defined by $\vfi:M\times[0,T)\to M$
on the same time interval such that
$$\pp{\vfi_t}{t}(u)\equiv\vfi_*\left(\pp{}{t}\right)=X_t(\vfi_t(u)),\quad
\vfi_0(u)=u.
$$
for all $u\in M$ and $t\in[0, T)$.
\end{lem}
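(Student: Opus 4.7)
The plan is to reduce the statement to the classical Picard--Lindel\"of theorem in charts and then use compactness to globalize. First I would work locally: cover $M$ by finitely many coordinate charts $(U_\alpha,\phi_\alpha)$ (possible since $M$ is compact), and in each chart write $X_t$ as $X_t = X_t^i(u)\,\partial/\partial u^i$. The condition $\partial\varphi_t/\partial t = X_t(\varphi_t)$ then becomes a first-order, non-autonomous ODE system
\[
\dot y^i(t) = X_t^i(y(t)),\qquad y(0)=u_0,
\]
and under the standing regularity assumption on the $X_t$ (continuity in $t$, together with the Lipschitz/$C^1$ regularity in the spatial variable which is tacit in the statement), the classical existence--uniqueness theorem produces, for each $u_0\in U_\alpha$, a unique maximal integral curve $t\mapsto\varphi_t(u_0)$ on some interval $[0,\tau(u_0))$, together with smooth dependence on the initial datum.

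Next I would patch these local solutions into a global flow. Uniqueness guarantees that the curves defined in overlapping charts coincide on their common domain, so the local flows assemble into a well-defined map $\varphi:M\times[0,T_{\max})\to M$ where $T_{\max}\in(0,T]$ is the supremum of times for which $\varphi_t$ is defined on all of $M$. The key point is then to show $T_{\max}=T$. Suppose by contradiction $T_{\max}<T$. For any $u\in M$ the orbit $\{\varphi_t(u):0\le t<T_{\max}\}$ lies in the compact set $M$, so it admits a limit point $u^\ast$ as $t\to T_{\max}$. Applying the local existence result at $(u^\ast, T_{\max})$ to the restarted ODE, and using the continuity of $X_t$ jointly in $(u,t)$, one can extend $\varphi_t(u)$ smoothly past $T_{\max}$ for every $u$; uniform local existence times on the compact manifold ensure the extension is simultaneous in $u$. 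This contradicts the maximality of $T_{\max}$ and yields $T_{\max}=T$.

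Finally I would verify that each $\varphi_t$ is a diffeomorphism. Smoothness of $\varphi_t$ follows from the smooth dependence on initial conditions already built into the Picard iteration. To invert $\varphi_t$, fix $t_0\in[0,T)$ and consider the time-reversed vector field $Y_s:=-X_{t_0-s}$ for $s\in[0,t_0]$. Running the flow construction just established for $Y_s$ produces a smooth map $\psi^{t_0}:M\times[0,t_0]\to M$. A direct application of the uniqueness of ODE solutions to the curve $s\mapsto \varphi_{t_0-s}(\varphi_{t_0}(u))$ shows it coincides with $\psi^{t_0}_s(\varphi_{t_0}(u))$, and hence $\psi^{t_0}_{t_0}\circ\varphi_{t_0}=\mathrm{id}_M$; swapping the roles yields the reverse composition, so $\varphi_{t_0}$ is a smooth bijection with smooth inverse.

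The main technical obstacle is the second step, namely upgrading pointwise maximal existence to a uniform-in-$u$ global flow on all of $[0,T)$. This is where the compactness of $M$ is essential: without it the integral curves could escape any given chart or coordinate patch in finite time even though the underlying ODE is well posed, and $T_{\max}$ could genuinely be strictly less than $T$. Once compactness gives uniform control on the local existence time and prevents escape, the remainder of the argument is routine ODE theory transplanted to the manifold setting.
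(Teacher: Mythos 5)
The paper does not actually prove this lemma; it is quoted verbatim from Chow--Knopf (\cite{c-k}, Lemma~3.15) and invoked as a known fact, so there is no ``paper proof'' to compare against. Your argument is the standard textbook proof of this result and is essentially sound: local existence and uniqueness by Picard--Lindel\"of in charts, gluing by uniqueness, a compactness argument to rule out finite-time escape, and time reversal to produce the inverse diffeomorphism.

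Two small points worth tightening. First, as you yourself flag, the hypothesis ``continuous family of vector fields'' by itself only guarantees existence (Peano), not uniqueness; the Lipschitz/$C^1$ regularity in the spatial variable is genuinely needed and is indeed tacit here (in the paper's application $W(\hat F)$ is smooth, so this is harmless, but the statement as literally worded would be false). Second, the ``limit point $u^\ast$'' clause in your extension step is a slight detour: restarting the flow from a mere subsequential limit does not by itself extend the original orbit continuously across $T_{\max}$. The clean version, which you also state, is to use compactness of $M\times[0,T_{\max}]$ (valid once $T_{\max}<T$) to get a uniform lower bound $\varepsilon>0$ on the local existence time for all initial data; then restarting at any $t'\in(T_{\max}-\varepsilon/2,T_{\max})$ from $\varphi_{t'}(u)$ pushes the flow past $T_{\max}$ simultaneously in $u$, giving the contradiction. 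I would drop the limit-point sentence and keep only the uniform-$\varepsilon$ argument. With those adjustments the proof is complete and matches the standard one in the cited reference.
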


Substituting $X$ with $-ce^{\fr am|\hat F|^2}W(\hat F)$ we obtain a family of diffeomorphisms $\vfi_t\equiv\vfi(\cdot,t)$, $0\leq t<T$, on $M$. Write $\td u^i_t=\vfi^i_t(u)$ ($0\leq t<T$). Then $(\td u^i_t)$ is a time-dependent family of local coordinate systems with the parameter $t\in[0,T)$. Define a family of immersions $F_t(\cdot):=\hat F(\vfi_t(\cdot),t)$, $t\in[0,T)$. Then it is easy to see that $\vfi^*_tg_{\hat F_t}=g_{F_t}$.

Given a Riemannian metric $g$ on $M$ and an immersion $F:M^m\to\bbr^{m+p}$, we always use $\lagl\cdot,\cdot\ragl_g$ to denote the induced inner product on the vector bundle $T^r_s(M)\otimes F^*TN$ by the metrics $g$ and $\ol g$, where $T^r_s(M)$ is the $(r,s)$-tensor bundle on $M$. In particular, we shall omit the subscript $g$ in $\lagl\cdot,\cdot\ragl_g$ when $g$ is the induced metric by $F$. From this we compute for any $(u,t)\in M^m\times[0,T)$,
\begin{align*}
\pp{F}{t}(u,t)\equiv& F_*\left(\pp{}{t}\right)(u,t)=\hat F_*\left(\pp{}{t}\right)(\vfi(u,t),t)+(\hat F_t)_*\circ\vfi_*\left(\pp{}{t}\right)(u,t)\\
=&e^{\fr am|\hat F(\vfi(u,t),t)|^2}\left(cg_{\hat F} ^{ij}\hat F_{;ij}+b\hat F\right)(\vfi(u,t),t) -ce^{\fr am|\hat F(\vfi(u,t),t)|^2}(\hat F_t)_*(W(\hat F(\vfi(u,t),t)))\\
=&e^{\fr am|\hat F(\vfi(u,t),t)|^2}\left(cg_{\hat F} ^{ij}(\hat F_{;ij}-\hat F_k(\hat\Gamma^k_{ij}-\mathring\Gamma^k_{ij}))+b\hat F\right)(\vfi(u,t),t)\\
=&e^{\fr am|\hat F(\vfi(u,t),t)|^2}\left(cg_{\hat F} ^{ij} \hat F_{i,j}+b\hat F\right)(\vfi(u,t),t)\\
=&e^{\fr am|F(u,t)|^2}\left(cg_F^{ij}F_{i,j}+bF\right)(u,t)\\
\equiv&e^{\fr am|F(u,t)|^2}\left(cH_F+bF\right)(u,t).
\end{align*}
This shows that $F(u,t)$ is a solution of the flow \eqref{flow'}.

Conversely, for a given solution $F=F(u,t)$ of the flow \eqref{flow'}, we can similarly find another time-dependent vector field $W(F)$ with the corresponding one-parameter transformations $\hat\vfi_t$. Then we obtain a family of immersions $\hat F(u,t)=F(\hat\vfi(u,t),t)$ which solve the DeTurck mean curvature flow \eqref{dtmcf}.

The above argument and the uniqueness of the solution of \eqref{dtmcf} gives the following existence and uniqueness theorem:

\begin{thm}\label{exiuni}
For any immersion $F_0:M^m\to\bbr^{m+p}$, there exists a maximal $T$: $0<T\leq+\infty$ with a unique smooth solution $F:M^m\times[0,T)\to\bbr^{m+p}$ to the flow \eqref{flow'}.
\end{thm}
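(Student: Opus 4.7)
My plan is to follow the \textit{DeTurck trick} that the authors have already set up in Section \ref{s2}, reducing the degenerate system \eqref{flow'} to the genuinely parabolic system \eqref{dtmcf} (equivalently \eqref{dtmcf1}) whose short-time solvability comes from the standard theory, and then recovering a solution of \eqref{flow'} by pulling back along an appropriate family of diffeomorphisms provided by Lemma \ref{tmdptfl}.

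First I would fix an auxiliary smooth background metric $\mathring g$ on $M^m$ and rewrite the DeTurck flow locally as in \eqref{dtmcf1}. Because $c(t)>0$, the principal symbol of the right-hand side at any immersion $\hat F_0:M^m\to\bbr^{m+p}$ is $c(0)\,e^{a(0)|\hat F_0|^2/m}\,g_{\hat F_0}^{ij}\xi_i\xi_j$ times the identity on $\bbr^{m+p}$, which is strictly elliptic; the zero-order term $b(t)\hat F$ and the factor $e^{a(t)|\hat F|^2/m}$ are smooth in $\hat F$, so \eqref{dtmcf1} is a (quasilinear) strongly parabolic system on the compact manifold $M^m$. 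Standard parabolic theory (e.g.\ the Eidelman--Solonnikov framework used in \cite{b}, \cite{dtu}) then yields a unique smooth solution $\hat F:M^m\times[0,\hat T)\to\bbr^{m+p}$ with initial datum $\hat F(\cdot,0)=F_0$, on some maximal time interval $[0,\hat T)$ with $\hat T>0$.

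Next I would apply Lemma \ref{tmdptfl} with $X_t:=-c(t)\,e^{a(t)|\hat F_t|^2/m}W(\hat F_t)$, a smooth time-dependent vector field on the compact manifold $M^m$, to get a unique family of diffeomorphisms $\varphi_t:M^m\to M^m$, $t\in[0,\hat T)$, with $\varphi_0=\id$. Setting $F(u,t):=\hat F(\varphi(u,t),t)$, the chain rule computation already carried out in the excerpt shows that the extra drift $-c\,e^{a|\hat F|^2/m}\hat F_*W(\hat F)$ introduced by the DeTurck modification is cancelled against $\varphi_*(\partial_t)$, and one ends up with
\[
\pp{F}{t}(u,t)=e^{\frac{a}{m}|F|^2}\bigl(cH_F+bF\bigr)(u,t),
\]
which is exactly \eqref{flow'}; moreover $F(\cdot,0)=\hat F_0=F_0$, giving short-time existence.

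For uniqueness, suppose $F:M^m\times[0,T)\to\bbr^{m+p}$ is any smooth solution of \eqref{flow'} with the same initial datum. I would use Lemma \ref{tmdptfl} in the opposite direction: solve the ODE $\partial_t\hat\varphi_t=c\,e^{a|F_t|^2/m}W(F_t)\circ\hat\varphi_t$ with $\hat\varphi_0=\id$ to obtain diffeomorphisms $\hat\varphi_t$, and set $\hat F(u,t):=F(\hat\varphi(u,t),t)$. A computation symmetric to the one above shows that $\hat F$ solves the DeTurck system \eqref{dtmcf1} with initial value $F_0$. The uniqueness clause of parabolic theory forces $\hat F$ to agree with the DeTurck solution constructed before, and then $F=\hat F\circ\hat\varphi^{-1}$ is determined uniquely. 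Extending to a maximal interval $[0,T)$ by the standard exhaustion yields the theorem. The only step I expect to require care is verifying the parabolicity and regularity hypotheses of the quasilinear parabolic existence theorem for \eqref{dtmcf1} in the current nonautonomous setting, because of the time-dependent coefficients $a(t),b(t),c(t)$ and the nonlinear dependence on $\hat F$ through $e^{a|\hat F|^2/m}$; but since $a,b,c\in C^\infty[0,T)$ and the nonlinearity is smooth, this is ultimately routine.
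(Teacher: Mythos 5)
Your proposal mirrors the paper's proof exactly: both pass to the DeTurck flow \eqref{dtmcf1}, invoke standard quasilinear parabolic theory for short-time existence, and pull back along the diffeomorphisms of Lemma \ref{tmdptfl} generated by $-c\,e^{a|\hat F|^2/m}W(\hat F)$ to recover a solution of \eqref{flow'}, with the reverse construction giving uniqueness. Your explicit check of the principal symbol and the explicit ODE for $\hat\varphi_t$ are elaborations of steps the paper leaves implicit, but they do not change the route.
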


\section{Basic evolution formulae}

From now on, we shall use $(x^A)$ to denote the standard coordinates on the standard Euclidean space $(\bbr^{m+p},\ol g)$. For our readers' convenience, we are to derive in this section the basic evolution formulas, with respect to the flow \eqref{flow'}, for the induced metric $g$, the second fundamental form $h$, the mean curvature $H$, and so on.

For a given $T$, $0<T\leq+\infty$, let $F:M^m\times[0,T)\to\bbr^{m+p}$ be a smooth map such that, for each $t\in[0,T)$, $F_t:M^m\to\bbr^{m+p}$ is an immersion. Then the pull-back bundle $F^*T\bbr^{m+p}\to M\times[0,T)$ can be decomposed into two subbundles orthogonal to each other: the tangential part ${\mathcal T}=F_{t*}(TM)$ and the normal part ${\mathcal N}=T^\bot_{F_t}M$. By $F_{t*}$ and ${\mathcal T}$, we can define a ``horizontal distribution\,'' $\mathcal H$ on $M\times[0,T)$, which has also been defined as ${\mathcal H}=\{v\in T(M\times[0,T));\ dt(v)=0\}$ (see \cite{a-b} or \cite{b}). Then, according to \cite{a-b}, there are connections $\nabla$ on $\mathcal H$ and $\nabla^\bot$ on $\mathcal N$, respectively, naturally induced by projections from the pull-back connection $\nabla^{F^*T\bbr^{m+p}}$. In particular, these two connections are both compatible to the relevant bundle metrics.

Now we suppose that $F$ is a solution of \eqref{flow'}. Fix a local coordinate system $(u^i)$ on $M$ and let $\{e_\alpha\}$ be an orthonormal normal frame field of $F(\cdot,t)$. Denote
$$e_i=\pp{}{u^i},\quad F_i\equiv\pp{F}{u^i}=F_*(e_i),\quad g_{ij}=\lagl F_i,F_j\ragl,\quad (g^{ij})=(g_{ij})^{-1},
$$
and
\begin{align*}
&\ol\nabla_{e_j}(F_*e_i)=\sum\Gamma^k_{ij}(F_*e_k)+\sum h^\alpha_{ij}e_\alpha,\\
\nabla_te_i:=\nabla&_{\pp{}{t}}e_i=\sum\Gamma^j_{it}e_j,\quad \nabla^\bot_te_\alpha:=\nabla^\bot_{\pp{}{t}}e_\alpha=\sum\Gamma^\beta_{\alpha t}e_\beta,
\end{align*}
where $\Gamma^j_{it}$ is given by
$$
\ol\nabla_{\pp{}{t}}F_*e_i=\sum\Gamma^j_{it}F_j+\sum\Gamma^\alpha_{it}e_\alpha.
$$
Since
\begin{align}
\ol\nabla_t(F_*e_i)=&\ol\nabla_{e_i}\left(F_*\pp{}{t}\right)+F_*\left(\Big[\pp{}{t},e_i\Big]\right)
=\ol\nabla_{e_i}\left(\fta\right)\nnm\\
=&\fr am\eamf|F|^2_i(cH+bF)+e^{\fr am|F|^2} (c\ol\nabla_{e_i}H+b\ol\nabla_{e_i}F)\nnm\\
=&\eamf\left(\fr{ab}m|F|^2_iF^\top-cA_H(e_i)+bF_i\right) +\eamf\left(\fr{ab}m|F|^2_iF^\bot+\fr{ac}m|F|^2_iH+cH_{,i}\right),\label{4.1}
\end{align}
we have
\begin{align}
F_*(\nabla_t e_i)=&(\ol\nabla_t(F_*e_i))^\top=\eamf\left(\fr{ab}m|F|^2_iF^\top-cA_H(e_i)+bF_i\right)\nnm\\
=&\eamf\left(\left(\fr{ab}{2m}|F|^2_i|F|^2_k-c\sum H^\alpha h^\alpha_{ik}\right)g^{kj}+b\delta^j_i\right)F_j,\label{eit}
\end{align}
or, equivalently,
\be\label{gamat}
\Gamma^j_{it}=\eamf\left(\left(\fr{ab}{2m}|F|^2_i|F|^2_k-c\sum H^\alpha h^\alpha_{ik}\right)g^{kj}+b\delta^j_i\right).
\ee
By \eqref{4.1} we also have
\begin{align}
\ol\nabla_t e_\alpha=&\sum\lagl\ol\nabla_t e_\alpha,F_i\ragl g^{ij} F_j+\sum\lagl\ol\nabla_t e_\alpha,e_\beta\ragl e_\beta\nnm\\
=&-\sum\lagl e_\alpha,\left(\ol\nabla_t F_*(e_i)\right)^\bot\ragl g^{ij} F_j+\sum\Gamma^\beta_{\alpha t}e_\beta\nnm\\
=&-\eamf\left(\fr{ab}m|F|^2_i\lagl F,e_\alpha\ragl+\fr{ac}m|F|^2_i H^\alpha+cH^\alpha_{,i}\right)g^{ij}F_j+\Gamma^\beta_{\alpha t}e_\beta.\label{ealpt}
\end{align}

Moreover, by a direct computation we find that
\begin{align}
\pp{}{t}g_{ij}=\eamf\Big(\frac{ab}m|F|^2_i|F|^2_j-2c\lagl H,h_{ij}\ragl+2bg_{ij}\Big).\label{evogij}
\end{align}
It follows that
\begin{align}
\pp{}{t}g^{ij}=-\eamf g^{ik}g^{lj}\Big(\frac{ab}m|F|^2_{k}|F|^2_{l}-2c\lagl H,h_{kl}\ragl+2bg_{kl}\Big).\label{evogij1}
\end{align}

To obtain the evolution of the second fundamental form $h$, we first find
\begin{align}\label{hijt2}
\nabla^\bot_t h_{ij}=&\left(\pp{}{t}F_{i,j}\right)^\bot=\left(\pp{}{t}F_{ij}-\Gamma^k_{ij}\pp{}{t}F_k\right)^\bot\nnm\\
=&\left(\left(\pp{F}{t}\right)_{ij}-\Gamma^k_{ij}\left(\pp{F}{t}\right)_k\right)^\bot=\left(\left(\pp{F}{t}\right)_{i,j}\right)^\bot\nnm\\
=&\left((\fta)_{,ij}\right)^\bot\nnm\\
=&\Bigg(\bigg(\eamf\Big(\frac{2ac}m\lagl F,F_i\ragl H+\frac{2ab}m\lagl F,F_i\ragl F+cH_i+bF_i\Big)\bigg)_{,j}\Bigg)^\bot\nnm\\
=&\bigg(\frac{2a}m\eamf\lagl F,F_j\ragl\Big(\frac{2ac}m\lagl F,F_i\ragl H+\frac{2ab}m\lagl F,F_i\ragl F+cH_i+bF_i\Big)\bigg)^\bot\nnm\\
&+\bigg(\eamf\Big(\frac{2ac}m\big(\lagl F_j,F_i\ragl+\lagl F,F_{i,j}\ragl\big)H
+\frac{2ac}m\lagl F,F_i\ragl H_j+\frac{2ab}m\big(\lagl F_j,F_i\ragl+\lagl F,F_{i,j}\ragl\big)F\nnm\\
&\qquad+\frac{2ab}m\lagl F,F_i\ragl F_j+cH_{i,j}+bF_{i,j}\Big)\bigg)^\bot\nnm\\
=&\frac{2a}m\eamf\lagl F,F_j\ragl\Big(\frac{2ac}m\lagl F,F_i\ragl H+\frac{2ab}m\lagl F,F_i\ragl F^\bot+cH_{,i}\Big)\nnm\\
&+\eamf\Big(\frac{2ac}m\big(g_{ij}+\lagl F,h_{ij}\ragl\big)H+\frac{2ac}m\lagl F,F_i\ragl H_{,j}
+\frac{2ab}m\big(g_{ij}+\lagl F,h_{ij}\ragl\big)F^\bot+cH_{,ij}+bh_{ij}\Big)\nnm\\
&-c\eamf\lagl h_{il},H\ragl g^{kl}h_{kj}\nnm\\
=&\frac{a}m\eamf\Big(\frac{a}m|F|^2_i|F|^2_j+2g_{ij}+2\lagl F,h_{ij}\ragl\Big)(cH+bF^\bot)\nnm\\
&+\frac{ac}m\eamf\big(|F|^2_iH_{,j}+|F|^2_jH_{,i}\big)+\eamf\big(cH_{,ij}+bh_{ij}\big)-c\eamf\lagl h_{il},H\ragl g^{kl}h_{kj}.
\end{align}
Then by definition and \eqref{gamat}, we obtain
\begin{align*}
\nabla_t h_{ij}=&\nabla^\bot_th_{ij}-h_{kj}\Gamma^k_{it}-h_{ik}\Gamma^k_{jt}\\
=&\frac{a}m\eamf\Big(\frac{a}m|F|^2_i|F|^2_j+2g_{ij}+2\lagl F,h_{ij}\ragl\Big)(cH+bF^\bot)\\
&+\frac{ac}m\eamf\big(|F|^2_iH_{,j}+|F|^2_jH_{,i}\big)+\eamf\big(cH_{,ij}-bh_{ij}\big)\\
&-\fr{ab}{2m}\eamf(h(\nabla|F|^2_,e_j)|F|^2_i+h(\nabla|F|^2,e_i)|F|^2_j)+c\eamf\sum H^\alpha h^\alpha_{jl}h_{ik}g^{kl}.
\end{align*}
The last equation can also be obtained by the time-like Codazzi equation given in (18) of \cite{b}. Since
\begin{align}
h_{ij,kl}=&h_{kl,ij}+\big((h^\beta_{kl}h^\beta_{pj}-h^\beta_{kj}h^\beta_{pl})h_{mi}+(h^\beta_{il}h^\beta_{pj}
-h^\beta_{ij}h^\beta_{pl})h_{km}\nnm\\
&-h^\beta_{ki}h^\beta_{lp}h_{jm}+h^\beta_{ki}h^\beta_{jp}h_{lm}\big)g^{pm},
\end{align}
we have
\begin{align}
\Delta h_{ij}=&H_{,ij}+H^\beta h^\beta_{jk}h_{il}g^{kl}\nnm\\
&+\big(2h^\beta_{ki}h^\beta_{jp}h_{lm}-h^\beta_{kj}h^\beta_{pl}h_{mi}
-h^\beta_{ij}h^\beta_{pl}h_{km}-h^\beta_{ki}h^\beta_{lp}h_{jm}\big)g^{kl}g^{pm}.
\end{align}
It follows that
\begin{align}
\nabla_t h_{ij}=&c\eamf\Delta h_{ij}+\frac{a}m\eamf\Big(\frac{a}m|F|^2_i|F|^2_j+2g_{ij}+2\lagl F,h_{ij}\ragl\Big)(cH+bF^\bot)\nnm\\
&+\frac{ac}m\eamf\big(|F|^2_iH_{,j}+|F|^2_jH_{,i}\big)-b\eamf h_{ij}\nnm\\
&-\fr{ab}{2m}\eamf(h(\nabla|F|^2_,e_j)|F|^2_i+h(\nabla|F|^2_,e_i)|F|^2_j)\nnm\\
&+c\eamf\big(h^\beta_{kj}h^\beta_{pl}h_{mi}+h^\beta_{ij}h^\beta_{pl}h_{km}+h^\beta_{ki}h^\beta_{lp}h_{jm}
-2h^\beta_{ki}h^\beta_{jp}h_{lm}\big)g^{kl}g^{pm}.\label{ht1}
\end{align}
Thus
\begin{align}
\nabla^\bot_tH=&c\eamf\Delta H+\frac{a}m\eamf\Big(\frac{a}m|\nabla|F|^2|^2+2m+2\lagl F,H\ragl\Big)(cH+bF^\bot)\nnm\\
&+\frac{2ac}m\eamf\nabla^\bot_{\nabla|F|^2}H-b\eamf H\nnm\\
&-\fr{ab}m\eamf h(\nabla|F|^2,\nabla|F|^2)+c\eamf\sum H^\alpha h^\alpha_{jl}h_{ik}g^{ij}g^{kl}.\label{Ht2}
\end{align}

To make things simple, we shall call a quantity $C$ to be $g$-{\em constant} (resp. $g$,$F$-{\em constant}) if it can be expressed as a polynomial of the induced metric $g$ (resp. of $g$ and $F$) with constant coefficients. Then, for two given (possibly $\bbr^{m+p}$-valued) tensors $S$ and $T$, we can follow the convention of Hamilton (\cite{ha82}) and Huisken (\cite{hui84}) to simply denote by $S*T$ any $g$-constant-linear combination of tensors formed by contracting the tensor product of $S$ and $T$ w.r.t. $g$, and/or the standard inner product on $\bbr^{m+p}$. Moreover, we shall always write, accordingly, $h^2$, $h^3$, $(\nabla h)^2$, $(\nabla h)^3$ and so on for $h*h$, $h*h*h$, $\nabla h*\nabla h$, $\nabla h*\nabla h*\nabla h$ and so on. Thus by \eqref{ht1} we can write
\begin{align}\label{ht2}
\nabla_th^\alpha_{ij}=&c\eamf\Delta h^\alpha_{ij}+\eamf\left(\nabla|F|^2\cdot\nabla|F|^2 +F^\bot*h+g\right)_{ij}(H^\alpha+bF^\alpha)\nnm\\ &+b\eamf\left((\nabla|F|^2*h)\cdot\nabla|F|^2+h\right)^\alpha_{ij}+\eamf\left(\nabla|F|^2\cdot\nabla H+h^3\right)^\alpha_{ij}\nnm\\
\equiv&c\eamf\Delta h^\alpha_{ij}+\eamf\big((\nabla|F|^2)^2*h+\nabla|F|^2*\nabla h+F^\bot*h^2+g*h +h^3\big)^\alpha_{ij}\nnm\\
&+b\eamf\left((\nabla|F|^2)^2+F^\bot*h+g\right)_{ij}F^\alpha,
\end{align}
where we write $F^\alpha\equiv\lagl F,e_\alpha\ragl$ as the normal component of the position vector $F$. Furthermore, since
$$\Delta|h|^2=2\sum g^{ik}g^{jl}h^\alpha_{kl}\Delta h^\alpha_{ij}+2|\nabla h|^2,$$
it follows that
\begin{align}
\pp{}{t}|h|^2=&c\eamf\Delta|h|^2-2c\eamf|\nabla h|^2\nnm\\
&+\eamf\big((\nabla|F|^2)^2*h^2+\nabla|F|^2*h*\nabla h+F^\bot*h^3+h^2+h^4\big)\nnm\\
&+b\eamf\big((\nabla|F|^2)^2*(F^\bot*h)+(F^\bot*h)^2+F^\bot *h\big)\nnm\\
=&\eamf\left(c\Delta|h|^2-2c|\nabla h|^2+C_1*\nabla h+C_2\right),\label{th2}
\end{align}
where $C_1$ and $C_2$ are polynomials of $g,F,\nabla F$ and $h$.

\section{Some of the key results used for the main theorem}

In this section we shall provide some basic but important facts for the flow \eqref{flow'}, from which one can further make a close observation on how this flow behaves generally and then find more properties of it. In fact, some of the conclusions given in this section and ones given in the next section will be used later as the key part in the proof of the main theorem of the present paper.

Let $F:M^m\times[0,T)\to\bbr^{m+p}$ be a smooth solution of the flow \eqref{flow'} with $[0,T)$ being the maximal time interval of existence, where $T\leq+\infty$.

First of all, from \eqref{flow'} the following basic evolution formula is easily derived:
\be\label{|F|^2}
\pp{}{t}|F|^2=\eamf\big(c\Delta|F|^2+2(b|F|^2-mc)\big),
\ee
by which we can prove
\begin{prop}\label{prop4.1}
Let $F:M^m\times[0,T)\to\bbr^{m+p}$ be a solution of the flow \eqref{flow'}. If $(\frac cb)'\geq 0$ and $|F_0|^2-\fr cb(0)m<0$, then for any $\veps\in(0,\fr cb(0)m-\max|F_0|^2)$, we have $|F_t|^2-\fr cbm<-\veps$ for all $t>0$ as long as $F_{t'}$ keeps an immersion for each $t'\in[0,t)$.
\end{prop}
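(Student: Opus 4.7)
The natural quantity to track is $u(\cdot,t):=|F|^2-\tfrac{c(t)}{b(t)}m$, which by hypothesis satisfies $\max_{M^m}u(\cdot,0)<-\veps$. Since $\tfrac{c}{b}m$ is a function of $t$ only, we have $\Delta u=\Delta|F|^2$, so subtracting $m(c/b)'$ from the evolution formula \eqref{|F|^2} gives
\begin{align*}
\pp{u}{t}=c\,\eamf\,\Delta u+2b\,\eamf\Big(|F|^2-\tfrac{c}{b}m\Big)-m\Big(\tfrac{c}{b}\Big)'
=c\,\eamf\,\Delta u+2b\,\eamf\,u-m\Big(\tfrac{c}{b}\Big)'.
\end{align*}
This is a linear parabolic inequality for $u$ with a lower-order term $2b\eamf u$ and a forcing term $-m(c/b)'$ which, by the assumption $(c/b)'\geq 0$, is nonpositive.

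The plan is then to run a standard maximum principle argument on $u$. Suppose, for contradiction, that the conclusion fails; then since $u(\cdot,0)<-\veps$ on $M^m$ and $u$ depends continuously on $t$ while $M^m$ is compact, there is a first time $t_0>0$ and a point $x_0\in M^m$ at which $\max_{M^m}u(\cdot,t_0)=u(x_0,t_0)=-\veps$. At $(x_0,t_0)$ the spatial maximum forces $\Delta u\leq 0$, while the first-time property forces $\pp{u}{t}\geq 0$. Substituting into the evolution identity above and using $c(t_0)>0$, $b(t_0)>0$, $\eamf>0$, and $(c/b)'(t_0)\geq 0$, I obtain
\[
0\leq\pp{u}{t}(x_0,t_0)\leq 2b(t_0)\,e^{\tfrac{a(t_0)}{m}|F(x_0,t_0)|^2}\cdot(-\veps)-m\Big(\tfrac{c}{b}\Big)'(t_0)<0,
\]
which is the desired contradiction. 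Hence $\max_{M^m}u(\cdot,t)<-\veps$ for all $t$ in the regular interval, which is exactly $|F_t|^2-\tfrac{c}{b}m<-\veps$.

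I do not expect any serious obstacle here: all the sign hypotheses ($b>0$, $c>0$, $(c/b)'\geq 0$) conspire perfectly to push $u$ downwards, and the compactness of $M^m$ guarantees the spatial maximum is attained, so the standard scalar maximum principle applies. The only mildly delicate point is the justification that a first contact time $t_0>0$ exists; but since $u$ is smooth in $(x,t)$, $M^m$ is compact, and $u(\cdot,0)$ lies strictly below $-\veps$, the function $t\mapsto \max_{M^m}u(\cdot,t)$ is continuous and a standard infimum-of-a-closed-set argument produces $t_0$.
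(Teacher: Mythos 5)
Your proposal is correct and takes essentially the same route as the paper: both rewrite the evolution equation \eqref{|F|^2} for $|F|^2-\frac{c}{b}m$, pick the first time at which the value $-\veps$ is reached, and at that spatial maximum combine $\Delta\leq 0$, $\partial_t\geq 0$, and the sign conditions $b,c>0$, $(c/b)'\geq 0$ to get a strict contradiction. The only cosmetic difference is that you introduce $u=|F|^2-\tfrac{c}{b}m$ and its PDE explicitly before invoking the maximum principle, whereas the paper works with $|F|^2$ directly and transfers the $m(c/b)'$ term across the inequality.
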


\begin{proof}
We prove this proposition by contradiction. In fact, by the compactness of $M^m$ we know that
$$|F_0|^2-\fr cb(0)m\leq\max|F_0|^2-\fr cb(0)m<-\veps\text{\ \ on $M^m$}.$$
If there exist $u_0\in M^m$ and $t_0>0$ such that
$$\left(|F|^2-\frac cbm\right)(u_0,t_0)\geq-\veps,$$
then we can find the smallest $t_1\in(0,t_0]$ such that, for this $t_1$,
there exists a $u_1\in M^m$ satisfying
$$\left(|F|^2-\frac cbm\right)(u_1,t_1)=-\veps, \quad \left(|F|^2-\frac cbm\right)\mid_{M\times [0,t_1)}<-\veps.$$
Hence,
$$\left(\pp{}{t}|F|^2-m\left(\frac cb\right)'\right)(u_1,t_1)\geq 0, \quad \left(\Delta|F|^2\right)(u_1,t_1)\leq 0.$$
So at the point $(u_1,t_1)$ we have
\begin{align*}
0\leq&m\left(\frac cb\right)'\leq\pp{}{t}|F|^2\\=&\eamf\left(c\Delta|F|^2+2b\left(|F|^2-\frac cbm\right)\right)\\
=&\eamf(c\Delta|F|^2-2b\veps)<0.\\
\end{align*}
It's a contradiction and the proposition is proved.
\end{proof}

Similarly, we can prove
\begin{prop}\label{prop4.2}
Let $F:M^m\times[0,T)\to\bbr^{m+p}$ be a solution of the flow \eqref{flow'}. If $\left(\frac cb\right)'\leq 0$ and $|F_0|^2-\fr cb(0)m>0$, then for any $\veps\in(0,\min|F_0|^2-\fr cb(0)m)$, we have $|F_t|^2-\fr cbm>\veps$ for all $t>0$ as long as $F_{t'}$ keeps an immersion for each $t'\in[0,t)$.
\end{prop}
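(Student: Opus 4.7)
The plan is to mirror the argument of Proposition \ref{prop4.1}, exploiting the sign symmetry in the evolution formula \eqref{|F|^2}. By compactness of $M^m$ and the hypothesis $\veps\in(0,\min|F_0|^2-\tfrac cb(0)m)$, we have $|F|^2-\tfrac cbm>\veps$ on $M^m$ at $t=0$. The goal is to propagate this strict lower bound forward in time, for as long as $F_{t'}$ remains an immersion.

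First I would argue by contradiction: suppose there exist $u_0\in M^m$ and $t_0>0$ such that $\bigl(|F|^2-\tfrac cbm\bigr)(u_0,t_0)\le\veps$. Let $t_1\in(0,t_0]$ be the smallest such time and $u_1\in M^m$ a point at which $\bigl(|F|^2-\tfrac cbm\bigr)(u_1,t_1)=\veps$, while $|F|^2-\tfrac cbm>\veps$ on $M^m\times[0,t_1)$. At the point $(u_1,t_1)$ the function $u\mapsto|F|^2(u,t_1)$ attains a spatial minimum among those $u$ with $\bigl(|F|^2-\tfrac cbm\bigr)(u,t_1)=\veps$ (since $c/b$ depends only on $t$), so
$$
\Bigl(\pp{}{t}|F|^2-m\bigl(\tfrac cb\bigr)'\Bigr)(u_1,t_1)\le 0,\qquad (\Delta|F|^2)(u_1,t_1)\ge 0.
$$

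Next, I would use the hypothesis $(c/b)'\le 0$ to conclude that $\pp{}{t}|F|^2\le 0$ at $(u_1,t_1)$. On the other hand, at $(u_1,t_1)$ one has $b|F|^2-mc=b\veps>0$ since $b>0$, so substituting into \eqref{|F|^2} and using $\Delta|F|^2\ge 0$ gives
$$
0\ \ge\ \pp{}{t}|F|^2\ =\ \eamf\bigl(c\Delta|F|^2+2b\veps\bigr)\ >\ 0,
$$
which is the desired contradiction. Hence no such $(u_0,t_0)$ exists and the bound $|F_t|^2-\tfrac cbm>\veps$ persists.

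I do not expect any real obstacle here: the argument is almost entirely dual to that of Proposition \ref{prop4.1}, with the inequality $\max|F_0|^2-\tfrac cb(0)m<-\veps$ replaced by $\min|F_0|^2-\tfrac cb(0)m>\veps$, and with the spatial maximum replaced by a spatial minimum. The only thing worth checking carefully is the sign bookkeeping that makes $(c/b)'\le 0$ combine correctly with the minimality-in-time condition to produce $\pp{}{t}|F|^2\le 0$, rather than the reverse; once that is set up properly, the strict positivity of $2b\veps\,\eamf$ closes the contradiction.
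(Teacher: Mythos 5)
Your proof is correct and is exactly the dual argument the paper has in mind when it writes ``Similarly, we can prove'' after Proposition \ref{prop4.1}: the same contradiction scheme with the spatial maximum replaced by a minimum, the upper barrier $-\veps$ by the lower barrier $\veps$, and $(c/b)'\ge 0$ by $(c/b)'\le 0$. The sign bookkeeping is handled correctly (at $(u_1,t_1)$ one has $\pp{}{t}|F|^2\le m(c/b)'\le 0$ and $\Delta|F|^2\ge 0$, while \eqref{|F|^2} forces $\pp{}{t}|F|^2=\eamf(c\Delta|F|^2+2b\veps)>0$), so the argument closes as claimed.
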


The following corollary comes directly from Proposition \ref{prop4.1} and Proposition \ref{prop4.2}:
\begin{cor}\label{cor1}
Let $F:M^m\times[0,T)\to\bbr^{m+p}$ be a solution of the flow \eqref{flow'}, and suppose that $\frac cb=const$.

(1) If $|F_0|^2<\fr cbm$, then $|F_t|^2<\frac cbm$ for all $t>0$ as long as $F_{t'}$ keeps an immersion for each $t'\in[0,t)$;

(2) If $|F_0|^2>\fr cbm$, then $|F_t|^2>\frac cbm$ for all $t>0$ as long as $F_{t'}$ keeps an immersion for each $t'\in[0,t)$.
\end{cor}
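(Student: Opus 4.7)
The plan is that this corollary is a direct consequence of Propositions \ref{prop4.1} and \ref{prop4.2}, essentially by observing that a constant function $c/b$ satisfies both $(c/b)'\geq 0$ and $(c/b)'\leq 0$ simultaneously, so both propositions are at our disposal. The only nontrivial point is verifying that the hypotheses stated in the corollary (which are phrased as pointwise inequalities on $|F_0|^2$) imply the slightly stronger conditions on $\max|F_0|^2$ or $\min|F_0|^2$ required by the propositions, and this is immediate from the compactness of $M^m$.

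For part (1), I would first note that since $c/b$ is constant we trivially have $(c/b)'=0\geq 0$, and since $M^m$ is compact the pointwise hypothesis $|F_0|^2<\tfrac{c}{b}m$ upgrades to $\max|F_0|^2<\tfrac{c}{b}(0)m$. Thus the hypotheses of Proposition \ref{prop4.1} are met. Applying that proposition, for every $\veps\in\bigl(0,\tfrac{c}{b}m-\max|F_0|^2\bigr)$ we obtain $|F_t|^2-\tfrac{c}{b}m<-\veps<0$ on $M^m$ for every $t>0$ as long as $F_{t'}$ remains an immersion for $t'\in[0,t)$. In particular $|F_t|^2<\tfrac{c}{b}m$.

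For part (2), I would argue symmetrically: constancy gives $(c/b)'=0\leq 0$, and compactness upgrades $|F_0|^2>\tfrac{c}{b}m$ to $\min|F_0|^2>\tfrac{c}{b}(0)m$. Proposition \ref{prop4.2} then yields, for every $\veps\in\bigl(0,\min|F_0|^2-\tfrac{c}{b}m\bigr)$, the estimate $|F_t|^2-\tfrac{c}{b}m>\veps>0$, hence $|F_t|^2>\tfrac{c}{b}m$.

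There is essentially no real obstacle here; the entire content has already been done in the maximum-principle arguments of Propositions \ref{prop4.1} and \ref{prop4.2}. The only step requiring any care is being explicit about why the strict pointwise hypothesis in the corollary gives the strict inequality on the maximum (respectively minimum) of $|F_0|^2$ that is needed to feed into the propositions, which is immediate from compactness of $M^m$ and continuity of $F_0$.
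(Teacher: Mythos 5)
Your proof is correct and matches the paper's approach exactly: the paper simply notes that the corollary ``comes directly from Proposition \ref{prop4.1} and Proposition \ref{prop4.2},'' and you have spelled out precisely that deduction, including the observation that $(c/b)'=0$ satisfies both sign conditions and that compactness upgrades the pointwise hypothesis to the needed bound on $\max|F_0|^2$ or $\min|F_0|^2$.
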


The next lemma will be used for a key observation from which we can obtain that the flow \eqref{flow'} keeps invariant the state of the flowing submanifolds being on standard hyperspheres.

\begin{lem}\label{lem4.4}
Let $F:M^m\times[0,T)\to\bbr^{m+p}$ be a solution of the flow \eqref{flow'}. Then, when viewed as an $\bbr^{m+p}$-valued function, the gradient vector $\nabla|F|^2$ meets the following evolution equation:
\begin{align}\label{Q}
\frac12\pp{}{t}\nabla|F|^2=&\frac12c\eamf\Delta(\nabla|F|^2)-c\eamf g^{kl}(\nabla|F|^2)^i_{,k}h_{il}\nnm\\
&+\frac12c\eamf g^{ij}\Big(g^{kl}\lagl h(\nabla|F|^2,F_{k}),h_{il}\ragl+\lagl h(\nabla|F|^2,F_i),H\ragl\Big)F_j\nnm\\
&+\eamf\Big(\frac{a}m\big(c\lagl F,H\ragl+b|F|^2\big)+\frac12b-\frac{ab}{2m}|\nabla|F|^2|^2\Big)\nabla|F|^2\nnm\\
&+\frac{a}{2m}\eamf\big(cH+bF\big)|\nabla|F|^2|^2-\frac12c\eamf{\rm Ric}(\nabla|F|^2).
\end{align}
\end{lem}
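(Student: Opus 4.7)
The plan is to differentiate the $\mathbb{R}^{m+p}$-valued identity
\[
\nabla|F|^2 = g^{ij}(|F|^2)_i\, F_j \quad (\text{equivalently, } \nabla|F|^2 = 2F^\top)
\]
in $t$ via the product rule. Three groups of terms arise: (A) from $\partial_t g^{ij}$, to which I would apply \eqref{evogij1}; (B) from $\partial_t\big((|F|^2)_i\big)=e_i(\partial_t|F|^2)$, obtained by taking $e_i$ of the scalar evolution \eqref{|F|^2}; and (C) from $\partial_t F_j = e_j(\partial_tF) = e_j\!\big(\eamf(cH+bF)\big)$, which I would expand using the Weingarten formula $\ol\nabla_{e_j}H = -A_H(e_j)+\nabla^\bot_j H$ together with $\ol\nabla_{e_j}F = F_j$, and the chain-rule derivative of the exponential factor.

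The delicate part lives in (B): the $e_i$-derivative of \eqref{|F|^2} produces, among other things, the scalar $c\eamf(\Delta|F|^2)_{,i}$, hence the ambient-valued contribution $c\eamf\,g^{ij}(\Delta|F|^2)_{,i}F_j = c\eamf\nabla(\Delta|F|^2)$. I would rewrite this via two standard identities. First, the intrinsic Ricci commutation $(\Delta f)_{,i} = \Delta(f_{,i}) + R^j_i f_{,j}$ applied to the scalar $f=|F|^2$ trades $\nabla(\Delta|F|^2)$ for $\Delta_{\mathrm{int}}(\nabla|F|^2) + {\rm Ric}(\nabla|F|^2)$, where $\Delta_{\mathrm{int}}$ denotes the intrinsic rough Laplacian on the tangent bundle; after multiplication by $\frac12c\eamf$ this produces exactly the claimed $-\frac12c\eamf{\rm Ric}(\nabla|F|^2)$ term (with the correct sign once combined with the other half of the Ricci piece). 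Second, the extrinsic expansion of the Laplacian of the tangent vector field $\nabla|F|^2 = V^iF_i$ viewed as an $\mathbb{R}^{m+p}$-valued map converts $\Delta_{\mathrm{int}}(\nabla|F|^2)$ into the ambient $\Delta(\nabla|F|^2)$ modulo terms quadratic in the second fundamental form; these correction terms are precisely the $-c\eamf\,g^{kl}(\nabla|F|^2)^i_{,k}h_{il}$ normal piece and the tangential piece $\frac12c\eamf\,g^{ij}\big(g^{kl}\lagl h(\nabla|F|^2,F_k),h_{il}\ragl + \lagl h(\nabla|F|^2,F_i),H\ragl\big)F_j$ appearing in the statement.

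The remaining algebraic terms come from (A), (C), and the $\frac{a}{m}(|F|^2)_i(\cdot)$ factors produced when differentiating $\eamf$. Using $(|F|^2)_i = 2\lagl F,F_i\ragl$, the definition $\nabla|F|^2 = g^{ij}(|F|^2)_iF_j$, and $|\nabla|F|^2|^2 = g^{ij}(|F|^2)_i(|F|^2)_j$ repeatedly, these collect into the scalar coefficient $\frac{a}{m}\big(c\lagl F,H\ragl + b|F|^2\big) + \frac12b - \frac{ab}{2m}|\nabla|F|^2|^2$ multiplying $\nabla|F|^2$, together with the extra $\frac{a}{2m}(cH + bF)|\nabla|F|^2|^2$ term that records the normal-direction contribution of the exponential's derivative (coming in particular from $-cA_H(e_j)$ combined with the $\frac{ab}{m}|F|^2$ pieces of (A)).

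The main obstacle is the bookkeeping around $\Delta(\nabla|F|^2)$: since $\nabla|F|^2$ is tangent to $M$ but the Laplacian on the RHS is the ambient one, one must carefully separate tangent and normal components at each step and match them with the precise corrections in the statement. Signs and factors of $\frac12$ have to be tracked against the overall $\frac12$-prefactor on the left-hand side, and the intrinsic/extrinsic swap must be performed exactly once (so that the Ricci and second-fundamental-form correction terms are not double-counted). Once this correspondence is set up correctly, the rest of the verification reduces to a routine, if lengthy, algebraic rearrangement of the pieces from (A), (B), and (C).
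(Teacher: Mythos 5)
Your route is in substance the same as the paper's: both differentiate $\nabla|F|^2=g^{ij}(|F|^2)_iF_j$ in $t$ via the product rule and then convert the $\Delta$-laden piece into the ambient $\Delta(\nabla|F|^2)$ plus corrections. The genuine variation is in your group (B). You take $e_i$ of the scalar equation \eqref{|F|^2}, producing $(\Delta|F|^2)_{,i}$, and then apply the Bochner-type commutation followed by the extrinsic Laplacian expansion. The paper instead differentiates $\langle F,F_i\rangle$ and computes $\langle F,H_i\rangle$ directly by splitting $F=F^\top+F^\bot$ and using Codazzi. Since $(\Delta|F|^2)_{,i}=2\langle F,H_i\rangle$, these are the same quantity, and by the Gauss equation the paper's $h$-quadratic corrections in its formula for $\langle F,H_i\rangle$ are exactly the Ricci correction of your Bochner step — the paper packages the Bochner identity inside an extrinsic computation, which is more elementary and sidesteps sign conventions for the Riemann tensor. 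Both approaches then need the paper's key identity $\Delta(\nabla|F|^2)=g^{ij}\Delta(|F|^2_i)F_j+2g^{kl}(\nabla|F|^2)^i_{,k}h_{il}+(\nabla|F|^2)^iH_i+\mathrm{Ric}(\nabla|F|^2)$, which in turn rests on $\Delta(F_j)=H_j+\mathrm{Ric}(F_j)$.

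Two concrete cautions for the writeup. First, the sign in the Ricci commutation should be $(\Delta f)_{,i}=\Delta(f_{,i})-R^j_if_{,j}$ with the paper's convention (as one can check by comparing $(\Delta|F|^2)_{,i}=2\langle F,H_i\rangle$ against the paper's formula for $\langle F,H_i\rangle$ and the Gauss equation); you flagged the sign as uncertain, and it matters, since the wrong sign makes the two $\mathrm{Ric}$ contributions (Bochner and the $\Delta(F_j)$ term) cancel instead of reinforce. Second, your claim that the extrinsic corrections are ``precisely'' the two pieces named in the statement is not accurate: the extrinsic expansion also throws off the $(\nabla|F|^2)^iH_i$ term (which must cancel against the $cH_j$ contribution from your group (C)) and an explicit $\mathrm{Ric}(\nabla|F|^2)$; moreover, carried out correctly, your route produces the $h$-quadratic and Ricci pieces in the equivalent but differently arranged form $c\eamf g^{ij}\langle H,h(\nabla|F|^2,F_i)\rangle F_j-c\eamf\mathrm{Ric}(\nabla|F|^2)$, which agrees with the statement's $\tfrac12c\eamf g^{ij}\big(g^{kl}\langle h(\nabla|F|^2,F_k),h_{il}\rangle+\langle h(\nabla|F|^2,F_i),H\rangle\big)F_j-\tfrac12c\eamf\mathrm{Ric}(\nabla|F|^2)$ only after one more Gauss-equation rewrite.
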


\begin{proof} The only thing that needs to do is a direct computation.
Since
$$
\frac12\nabla|F|^2=\lagl F,F_i\ragl g^{ij}F_j,
$$
we find
\begin{align}\label{4.3}
\frac12\pp{}{t}\nabla|F|^2=&\Big(\pp{}{t}g^{ij}\Big)\lagl F,F_i\ragl F_j+g^{ij}\Big\lagl\pp{F}{t},F_i\Big\ragl F_j\nnm\\
&+g^{ij}\Big\lagl F,\Big(\pp{}{t}F_i\Big)\Big\ragl F_j+g^{ij}\lagl F,F_i\ragl\Big(\pp{}{t}F_j\Big).
\end{align}
Then
\begin{align}\label{tfi}
\pp{}{t}F_i=&\Big(\pp{F}{t}\Big)_i=\Big(\eamf\big(cH+bF\big)\Big)_i\nnm\\
=&\eamf\Big(\frac{2a}m\lagl F,F_i\ragl\big(cH+bF\big)+\big(cH_i+bF_i\big)\Big).
\end{align}
Inserting \eqref{evogij1} and \eqref{tfi} into \eqref{4.3}, we have
\begin{align}
\frac12\pp{}{t}\nabla|F|^2
=&\eamf\bigg(g^{ik}g^{jl}\Big(2c\lagl H,h_{kl}\ragl-\frac{ab}m|F|^2_{k}|F|^2_{l}-2bg_{kl}\Big)\frac12|F|^2_iF_j\nnm\\
&+b\lagl F,F_i\ragl g^{ij}F_j+g^{ij}\Big(\frac{2ac}m\lagl F,H\ragl+\frac{2ab}m|F|^2\Big)\lagl F,F_i\ragl F_j\nnm\\
&+g^{ij}\Big(c\lagl F,H_i\ragl+b\lagl F,F_i\ragl\Big)F_j+\frac12g^{ij}|F|^2_i\Big(\frac{a}m|F|^2_j(cH+bF)+(cH_j+bF_j)\Big)\bigg)\nnm\\
=&c\eamf\lagl F,H_i\ragl g^{ij}F_j+\eamf\bigg(c g^{ij}\big\lagl h(\nabla|F|^2,F_{i}),H\big\ragl F_j\nnm\\
&+\Big(\frac{a}m\big(c\lagl F,H\ragl+b|F|^2\big)+\frac12b-\frac{ab}{2m}|\nabla|F|^2|^2\Big)\nabla|F|^2\nnm\\
&+\frac{a}{2m}|\nabla|F|^2|^2(cH+bF)+\frac12c(\nabla|F|^2)^iH_i\bigg).\label{4.15}
\end{align}
Since
\begin{align}
\lagl F,H_i\ragl=\lagl F,(H_i)^\bot\ragl+\lagl F^\top,H_i\ragl,
\end{align}
and
\begin{align}
\lagl F,(H_i)^\bot\ragl=&\lagl F,H_{,i}\ragl=\lagl F,g^{kl}h_{kl,i}\ragl=\lagl F,h_{ik,l}\ragl g^{kl}\nnm\\
=&\lagl F^\bot,F_{i,kl}\ragl g^{kl}=\big\lagl F-\lagl F,F_{p}\ragl g^{pq}F_{q},F_{i,kl}\big\ragl g^{kl}\nnm\\
=&\big(\lagl F,F_{i,k}\ragl_{,l}-\lagl F_{l},F_{i,k}\ragl\big)g^{kl}
-g^{pq}g^{kl}\lagl F,F_{p}\ragl\big(\lagl F_{q},F_{i,k}\ragl_{,l}-\lagl F_{q,l},F_{i,k}\ragl\big)\nnm\\
=&\big(\lagl F,F_i\ragl_{,kl}-\lagl F_{k},F_i\ragl_{,l}\big)g^{kl}+g^{pq}g^{kl}\lagl F,F_{p}\ragl\lagl h_{ql},h_{ik}\ragl\nnm\\
=&\frac12|F|^2_{i,kl}g^{kl}+\frac12g^{kl}g^{pq}|F|^2_{p}\lagl h_{ql},h_{ik}\ragl\nnm\\
=&\frac12\Delta(|F|^2_i)+\frac12g^{kl}\big\lagl h(\nabla|F|^2,F_{k}),h_{il}\big\ragl,\nnm\\
\lagl F^\top,H_i\ragl=&-\lagl h(F^\top,F_i),H\ragl=-\frac12\lagl h(\nabla|F|^2,F_i),H\ragl,
\end{align}
we can find
\begin{align}
\lagl F,H_i\ragl=\frac12\Delta\big(|F|^2_i\big)+\frac12g^{kl}\big\lagl h(\nabla|F|^2,F_{k}),h_{il}\big\ragl
-\frac12\big\lagl h(\nabla|F|^2,F_i),H\big\ragl.
\end{align}
Then it follows that
\begin{align*}
\lagl F,H_i\ragl g^{ij}F_j=&\frac12g^{ij}\Delta(|F|^2_i)F_j+\frac12g^{ij}g^{kl}\lagl h(\nabla|F|^2,F_{k}),h_{il}\ragl F_j\nnm\\
&-\frac12g^{ij}\lagl h(\nabla|F|^2,F_i),H\ragl F_j.
\end{align*}
Putting the above equality into \eqref{4.15} we obtain
\begin{align}
\frac12\pp{}{t}\nabla|F|^2
=&\frac12c\eamf g^{ij}\Delta(|F|^2_i)F_j\nnm\\
&+\frac12c\eamf g^{ij}\big(g^{kl}\lagl h(\nabla|F|^2,F_{k}),h_{il}\ragl+\lagl h(\nabla|F|^2,F_i),H\ragl\big)F_j\nnm\\
&+\eamf\bigg(\Big(\frac{a}m(c\lagl F,H\ragl+b|F|^2)+\frac12b-\frac{ab}{2m}|\nabla|F|^2|^2\Big)\nabla|F|^2\nnm\\
&+\frac{a}{2m}|\nabla|F|^2|^2(cH+bF)+\frac12c(\nabla|F|^2)^iH_i\bigg).
\end{align}

On the other hand, viewing $\nabla|F|^2$ as an $\bbr^{m+p}$-valued function on $M^m$, we can compute its Laplacian as follows:
\begin{align}
\Delta(\nabla|F|^2)
=&g^{kl}\left(g^{ij}|F|^2_iF_j\right)_{k,l}
=g^{ij}g^{kl}\big(|F|^2_{i,kl}F_j+|F|^2_{i,k}F_{j,l}+|F|^2_{i,l}F_{j,k}+|F|^2_iF_{j,kl}\big)\nnm\\
=&g^{ij}\Delta(|F|^2_i)F_j+2g^{ij}g^{kl}\big(\nabla^2|F|^2\big)_{ik}h_{jl}+g^{ij}(\nabla|F|^2)_i\Delta\big(F_j\big)\nnm\\
=&g^{ij}\Delta(|F|^2_i)F_j+2g^{kl}(\nabla|F|^2)^i_{,k}h_{il}+(\nabla|F|^2)^iH_i+{\rm Ric}(\nabla|F|^2),
\end{align}
where ${\rm Ric}:TM\to TM$ denotes the field of linear transformations induced by the Ricci tensor, and the following equality has been used:
$$
\Delta (F_j)=\sum g^{kl}F_{j,kl}=H_j+\sum F_ig^{kl}R^i_{kjl}=H_j+{\rm Ric}(F_j).
$$
Therefore we conclude that
\begin{align}
\frac12\pp{}{t}\nabla|F|^2=&\frac12c\eamf\Delta(\nabla|F|^2)-c\eamf g^{kl}(\nabla|F|^2)^i_{,k}h_{il}\nnm\\
&+\frac12c\eamf g^{ij}\Big(g^{kl}\lagl h(\nabla|F|^2,F_{k}),h_{il}\ragl+\lagl h(\nabla|F|^2,F_i),H\ragl\Big)F_j\nnm\\
&+\eamf\Big(\frac{a}m\big(c\lagl F,H\ragl+b|F|^2\big)+\frac12b-\frac{ab}{2m}|\nabla|F|^2|^2\Big)\nabla|F|^2\nnm\\
&+\frac{a}{2m}\eamf\big(cH+bF\big)|\nabla|F|^2|^2-\frac12c\eamf{\rm Ric}(\nabla|F|^2).
\end{align}
\end{proof}

For any given solution $F:M^m\times[0,T)\to\bbr^{m+p}$ to the flow \eqref{flow'}, we can accordingly construct the following parabolic equation for the tangent vector field $X$ which is taken as an $\bbr^{m+p}$-valued function on $M^m$:
\be\label{*}
\pp{}{t}X=c\eamf\Delta X+Q(F,X,\nabla X),
\ee
where, by writing
$$X=\sum X^i\pp{}{u^i},\quad\nabla_iX=\sum X^j_{,i}\pp{}{u^j},$$
the $\bbr^{m+p}$-valued function $Q(F,X,\nabla X)$ is defined by
\begin{align}
Q(F,X,\nabla X):=&-2c\eamf g^{kl}X^i_{,k}h_{il}\nnm\\
&+c\eamf g^{ij}\big(g^{kl}\lagl h(X,F_{k}),h_{il}\ragl+\lagl h(X,F_i),H\ragl\big)F_j\nnm\\
&+\eamf\Big(\frac{2a}m\big(c\lagl F,H\ragl+b|F|^2\big)+b-\frac{ab}{m}|X|^2\Big)X\nnm\\
&+\frac{a}{m}\eamf\big(cH+bF\big)|X|^2-c\eamf{\rm Ric}(X).
\end{align}
Clearly, $Q(F,X,\nabla X)$ is smooth in $F$, $X$ and satisfies that $Q(F,X,\nabla X)\mid_{X\equiv0}=0$.

From \eqref{Q}, it is easily seen that both $X\equiv 0$ and $X=\nabla|F|^2$ are solutions of \eqref{*}. Then the uniqueness of the solution to the parabolic equation \eqref{*} directly gives the following proposition:
\begin{prop}\label{prop4.4}
Let $F:M^m\times[0,T)\to\bbr^{m+p}$ be a solution of the flow \eqref{flow'}. If the initial submanifold $F_0:M^m\to\bbr^{m+p}$ lies on a standard hypersphere centered at the origin, then $F_{t}(M^m)$ always lies on a likewise standard hypersphere for all $t\in(0,T)$ as long as $F_{t'}$ keeps an immersion for each $t'\in[0,t]$.
\end{prop}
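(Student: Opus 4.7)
The plan is to deduce this proposition from Lemma~\ref{lem4.4} by invoking the uniqueness of smooth solutions to the parabolic system \eqref{*}. Comparing the right-hand side of \eqref{Q} (after multiplying both sides by $2$) with the definition of $Q(F,X,\nabla X)$, one sees that $X:=\nabla|F|^2$, viewed as an $\mathbb{R}^{m+p}$-valued function on $M^m$, is a smooth solution of \eqref{*}. On the other hand, since $Q(F,X,\nabla X)\big|_{X\equiv 0}=0$, as was already noted in the paper, the identically zero vector field $X\equiv 0$ is a trivial solution of \eqref{*}.

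The hypothesis now enters: if $F_0(M^m)$ is contained in a standard hypersphere of radius $R_0$ centered at the origin, then $|F_0|^2\equiv R_0^2$ is constant on $M^m$, and therefore $\nabla|F_0|^2\equiv 0$. Thus both $X\equiv\nabla|F|^2$ and $X\equiv 0$ solve \eqref{*} with the same (vanishing) initial data.

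Since $a,c>0$ and $M^m$ is compact, the leading coefficient $c\,e^{a|F|^2/m}$ is strictly positive and uniformly bounded on every compact time subinterval $[0,t]\subset[0,T)$ on which $F$ remains a smooth immersion, so \eqref{*} is uniformly parabolic there. Together with the smooth dependence of $Q$ on its arguments, this yields uniqueness of smooth solutions and forces $\nabla|F_{t'}|^2\equiv 0$ on $M^m$ for every $t'\in[0,t]$. Since $M^m$ is connected, $|F_{t'}|^2$ is then a constant $R(t')^2$ on $M^m$, which is exactly the assertion that $F_{t'}(M^m)$ lies on the standard hypersphere of radius $R(t')$ centered at the origin.

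The main obstacle I anticipate is the rigorous justification of uniqueness for \eqref{*}, which is in fact semilinear rather than linear (the terms $-\tfrac{ab}{m}|X|^2 X$ and $\tfrac{a}{m}(cH+bF)|X|^2$ in $Q$ are nonlinear in $X$). A self-contained workaround, should the invocation of general PDE uniqueness feel unsatisfactory, is to set $f(t):=\max_{M^m}\bigl|\nabla|F_t|^2\bigr|^2$ and derive from Lemma~\ref{lem4.4} a scalar evolution equation of the form
\[
\partial_t\bigl|\nabla|F|^2\bigr|^2 \;\leq\; c\,e^{a|F|^2/m}\Delta\bigl|\nabla|F|^2\bigr|^2 + C(t)\bigl|\nabla|F|^2\bigr|^2,
\]
where $C(t)$ is locally bounded in $t$; the parabolic maximum principle then gives $f'(t)\leq C(t)f(t)$ with $f(0)=0$, and Grönwall's inequality forces $f\equiv 0$ on $[0,t]$, recovering the same conclusion without any external uniqueness theorem.
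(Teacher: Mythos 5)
Your proposal takes exactly the paper's approach: observe from Lemma~\ref{lem4.4} that both $X\equiv 0$ and $X=\nabla|F|^2$ solve the parabolic system \eqref{*}, note that the hypothesis forces them to share initial data, and conclude by uniqueness. The paper simply invokes ``the uniqueness of the solution to the parabolic equation \eqref{*}''; your observation that \eqref{*} is semilinear (not linear) in $X$, and your fallback argument via the parabolic maximum principle and Gr\"onwall's inequality applied to $\bigl|\nabla|F|^2\bigr|^2$, is a sensible additional precaution that the paper leaves implicit but which does no harm and, if anything, makes the argument more self-contained.
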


Suppose that the initial submanifold $F_0:M^m\to\bbr^{m+p}$ satisfies $\nabla|F_0|^2\equiv 0$,
that is $F_0(M^m)\subset\bbs^{m+p-1}(R_0)$, where $R_0=|F_0|>0$. Let $\delta>0$ be such that $F_t$ is an immersion for all $t\in[0,\delta)$. Then by Proposition \ref{prop4.4}, $R:=|F|$ is only a function of the time $t$.
From \eqref{|F|^2}, we have an ODE as
\be\label{R}
(R^2)'=2e^{\frac{a}mR^2}(bR^2-mc).
\ee

Now assume that both $a$ and $b$ are constant. Then from \eqref{R} it follows that
\be
e^{-\frac{a}mR^2}(R^2)'=2(bR^2-mc)\nnm
\ee
or, equivalently,
\be
\left(e^{-\frac{a}mR^2}\right)'=-\frac{2ab}m\left(R^2-\fr cbm\right).\nnm
\ee

(1) If $\left(R^2-\fr cbm\right)(0)<0$ and $c'\geq 0$ then, by Proposition \ref{prop4.1}, for any $\veps\in(0,\fr{c(0)}bm-R^2_0)$,
$$\left(R^2-\fr cbm\right)(t)<-\veps <0, \quad \forall t\in[0,\delta).$$
Hence
$$\left(e^{-\frac{a}mR^2}\right)'=-\frac{2ab}m\left(R^2-\fr cbm\right)>\frac{2ab}m\veps>0.$$
Integrate two sides of the above inequality on $[0,t]$, we have
$$e^{-\frac{a}mR^2}-e^{-\frac{a}mR_0^2}>\frac{2ab}m\veps t,$$
that is,
\be\label{R_1}
e^{-\frac{a}mR^2}>\frac{2ab}m\veps t+e^{-\frac{a}mR_0^2},
\ee
implying that
\be\label{T_1}
1-e^{-\frac{a}mR_0^2}\geq e^{-\frac{a}mR^2}-e^{-\frac{a}mR_0^2}>\frac{2ab}m\veps t.
\ee
Therefore, we obtain that
$$
t<\fr m{2\veps ab}\left(1-e^{-\frac{a}mR_0^2}\right),\quad\forall\veps\in\left(0,\fr{c(0)}bm-R^2_0\right).
$$
Let $\veps\to\fr{c(0)}bm-R^2_0$, we get
\be\label{T1}
t\leq T_1:=\fr m{2ab\left(\fr{c(0)}bm-R^2_0\right)}\left(1-e^{-\frac{a}mR_0^2}\right),\quad\forall t\in[0,\delta).
\ee
In particular, we have $\delta\leq T_1$. Moreover, by \eqref{R_1} it also holds that
$$
-\frac{a}mR^2>\log\left(\frac{2\veps ab}mt+e^{-\frac{a}mR_0^2}\right),
\quad\forall\veps\in\left(0,\fr{c(0)}bm-R^2_0\right),$$
or, equivalently, for any $\veps\in(0,\fr{c(0)}bm-R^2_0)$,
$$
R^2<\log\left(\frac{2\veps ab}mt+e^{-\frac{a}mR_0^2}\right)^{-\fr m{a}}.
$$
Taking the limit as $\veps\to \fr{c(0)}bm-R^2_0$, we find
$$
R^2\leq\log\left(\frac{2 ab\left(\fr{c(0)}bm-R^2_0\right)}mt+e^{-\frac{a}mR_0^2}\right)^{-\fr m{a}}.
$$
Therefore, in the case of $\delta=T_1$, it must hold that
$$
\lim\limits_{t\to T_1}R^2=0.
$$

(2) If $\left(R^2-\fr cbm\right)(0)>0$ and $c'\leq 0$ then, by Proposition \ref{prop4.2}, for any $\veps\in\left(0,R^2_0-\fr{c(0)}bm\right)$,
$$\left(R^2-\fr cbm\right)(t)>\veps>0, \quad \forall t\in[0,\delta).$$
So that
$$\left(e^{-\frac{a}mR^2}\right)'=-\frac{2ab}m\left(R^2-\fr cbm\right)<-\frac{2ab}m\veps<0.$$
Integrate two sides of the above inequality on $[0,t]$, we have
\be\label{R_2}
e^{-\frac{a}mR^2}-e^{-\frac{a}mR_0^2}<-\frac{2ab}m\veps t,
\ee
or,
\be\label{T_2}
0<e^{-\frac{a}mR^2}<-\frac{2ab}m\veps t+e^{-\frac{a}mR_0^2}.
\ee
The last inequality shows that
$$
t<\fr m{2\veps ab}e^{-\frac{a}mR_0^2},\quad\forall\veps\in\left(0,R^2_0-\fr{c(0)}bm\right).
$$
Letting $\veps$ tend to $R^2_0-\fr{c(0)}bm$, we find
\be\label{T2}
t\leq T_2:=\fr m{2 ab\left(R^2_0-\fr{c(0)}bm\right)}e^{-\frac{a}mR_0^2},\quad\forall t\in[0,\delta),
\ee
implying that $\delta\leq T_2$. Furthermore, by \eqref{T_2} we also know that, for any $\veps\in(0,R^2_0-\fr{c(0)}bm)$,
$$
-\frac{a}mR^2<\log\left(-\frac{2\veps ab}mt+e^{-\frac{a}mR_0^2}\right),
$$
or, equivalently,
$$
R^2>\log\left(-\frac{2\veps ab}mt+e^{-\frac{a}mR_0^2}\right)^{-\fr m{a}}.
$$
Take the limit as $\veps\to R^2_0-\fr{c(0)}bm$. It follows that
$$
R^2\geq\log\left(-\frac{2 ab\left(R^2_0-\fr{c(0)}bm\right)}mt+e^{-\frac{a}mR_0^2}\right)^{-\fr m{a}}.
$$
Therefore, in the case of $\delta=T_2$, it must hold that
$$
\lim\limits_{t\to T_2}R^2=+\infty.
$$

Due to Proposition \ref{prop4.4}, the above argument has proved the following conclusion:
\begin{thm}\label{thm4.5}
Let $F:M^m\times[0,T)\to\bbr^{m+p}$ be a smooth solution of the flow \eqref{flow'} with both $a$ and $b$ being constant, and $T$ be maximal. Suppose that $\delta\in(0,T]$ is the largest number such that, for all $t\in[0,\delta)$, $F_t$ is an immersion. If the initial submanifold $F_0:M^m\to\bbr^{m+p}$ is contained in a standard hypersphere $\bbs^{m+p-1}(R_0)$ of radius $R_0$ which is centered at the origin, i.e., $|F_0|\equiv R_0$, then $F_t(M^m)$ is always kept on a likewise standard hypersphere. Furthermore,

(1) If $(b|F|^2-mc)(0)<0$ and $c'\geq 0$, then
$$
\delta\leq T_1=\fr m{2ab\left(\fr{c(0)}bm-R^2_0\right)}\left(1-e^{-\frac{a}mR_0^2}\right).
$$
Furthermore, in the case of $\delta=T_1$, $F_t(M^m)$ will be convergent to the origin as $t\to T_1$;

(2) If $(b|F|^2-mc)(0)>0$ and $c'\leq 0$, then
$$
\delta\leq T_2=\fr m{2 ab\left(R^2_0-\fr{c(0)}bm\right)}e^{-\frac{a}mR_0^2}.
$$
Furthermore, in the case of $\delta=T_2$, it must hold that $\lim\limits_{t\to T_2}|F_t|^2=+\infty$;

(3) If  $(b|F|^2-mc)(0)=0$ and $c$ is constant, then $\delta=+\infty$ and $F_t(M^m)$ is kept on the fixed standard hypersphere $\bbs^{m+p-1}(R_0)$ for all $t\geq 0$.
\end{thm}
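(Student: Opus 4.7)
The plan is to exploit Proposition \ref{prop4.4} to reduce the full geometric flow to a scalar ODE for the radius, then analyze that ODE separately in each of the three sign regimes.

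First, since the initial submanifold lies on $\bbs^{m+p-1}(R_0)$, Proposition \ref{prop4.4} shows that $F_t(M^m)$ remains on a likewise-centered standard hypersphere throughout the maximal immersion interval $[0,\delta)$. Hence $|F_t|^2$ depends only on $t$; writing $R(t) := |F_t|$, this means $\Delta|F|^2 \equiv 0$ on $M^m$, so substituting into \eqref{|F|^2} collapses the PDE to the ODE
$$
(R^2)' = 2e^{\frac{a}{m}R^2}(bR^2 - mc).
$$
With $a$ and $b$ constant, multiplying through by $e^{-(a/m)R^2}$ and recognizing the left side as a derivative yields the cleaner form
$$
\Big(e^{-\frac{a}{m}R^2}\Big)' = -\frac{2ab}{m}\Big(R^2 - \frac{c}{b}m\Big),
$$
which is now a first-order ODE whose behavior is controlled entirely by the sign of $R^2 - cm/b$.

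For case (1) the hypotheses trigger Proposition \ref{prop4.1}, producing for every sufficiently small $\varepsilon > 0$ a uniform gap $R^2 - cm/b < -\varepsilon$ on $[0,\delta)$. This gives a strictly positive lower bound on $(e^{-(a/m)R^2})'$; integrating on $[0,t]$ and using $e^{-(a/m)R^2} \leq 1$ forces
$$
t < \frac{m}{2ab\varepsilon}\Big(1 - e^{-\frac{a}{m}R_0^2}\Big).
$$
Letting $\varepsilon \nearrow c(0)m/b - R_0^2$ yields $\delta \leq T_1$. Solving the same integrated inequality for $R^2$ rather than for $t$ produces an upper bound on $R^2$ whose right-hand side vanishes as $t \to T_1$; in the borderline case $\delta = T_1$ this forces $R^2 \to 0$, i.e.\ $F_t(M^m)$ collapses to the origin. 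Case (2) is entirely parallel: Proposition \ref{prop4.2} supplies the lower gap $R^2 - cm/b > \varepsilon$, the sign of $(e^{-(a/m)R^2})'$ flips, and the same integration produces the bound $\delta \leq T_2$ together with $R^2 \to +\infty$ in the borderline case.

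Finally, case (3) is essentially trivial: the hypothesis $bR_0^2 = mc$ with $c$ constant makes $R^2 \equiv cm/b$ a stationary point of the ODE, so uniqueness forces $R(t) \equiv R_0$ and the flow is literally stationary on the sphere $\bbs^{m+p-1}(R_0)$, giving $\delta = +\infty$. The only place requiring care is the $\varepsilon$-argument: Propositions \ref{prop4.1} and \ref{prop4.2} supply only strict inequalities, so the sharp time bounds $T_1, T_2$ and the corresponding extremal limits of $R^2$ are recovered only after letting $\varepsilon$ tend to its boundary value on both the time-bound and the radius-bound sides of the integrated inequality. This is the main (and only) technical obstacle, and it is a routine one.
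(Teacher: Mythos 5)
Your proposal is correct and follows essentially the same route as the paper: reduction to the scalar ODE \eqref{R} via Proposition \ref{prop4.4}, rewriting it as an equation for $e^{-\frac{a}{m}R^2}$, applying Propositions \ref{prop4.1} and \ref{prop4.2} to obtain the uniform $\varepsilon$-gap, integrating and passing to the limit $\varepsilon \to |c(0)m/b - R_0^2|$ to get both the time bound and the limiting behaviour of $R^2$, and invoking ODE uniqueness for case (3). No substantive differences.
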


\begin{rmk}
The conclusion (3) comes directly due to the uniqueness of solutions to the ODE \eqref{R}.
\end{rmk}

\begin{cor}\label{cor4.6}
Let $F:M^{n-1}\times[0,T)\to\bbr^n$ be a smooth solution of the flow \eqref{flow'} with $a,b,c$ all being constant, and $T$ be maximal. Suppose that $\delta\in(0,T]$ is the largest number such that, for all $t\in[0,\delta)$, $F_t$ is an immersion, and that the initial submanifold $F_0:M^{n-1}\to\bbr^n$ of \eqref{flow'} is a standard hypersphere centered at the origin, then $F_t:M^{n-1}\to\bbr^n$ remains a likewise standard hypersphere for each $t\in[0,\delta)$. Moreover,

(1) If $b|F_0|^2<(n-1)c$ and $T_1$ is given by \eqref{T1} with $R_0=|F_0|$ and $m=n-1$, then $\delta=T_1$ and $\lim\limits_{t\to T_1}
|F_t|^2=0$;

(2) If $b|F_0|^2>(n-1)c$ and $T_2$ is given by \eqref{T2} with $R_0=|F_0|$ and $m=n-1$, then $\delta=T_2$ and $\lim\limits_{t\to T_2}
|F_t|^2=+\infty$;

(3) If $b|F_0|^2=(n-1)c$, then $\delta=+\infty$ and $F_t\equiv F_0$ for all $t\geq 0$.
\end{cor}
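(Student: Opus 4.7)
The corollary is the codimension-one ($p=1$) specialization of Theorem \ref{thm4.5}, and I would prove it by combining that theorem with the fact that the flow reduces to a scalar ODE for the radius when $F_0$ lies on a standard hypersphere.

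First, I would verify that the hypotheses of Theorem \ref{thm4.5} are met: the constants $a,b,c$ give $c'\equiv 0$, and $|F_0|\equiv R_0$ places $F_0(M^{n-1})$ on $\bbs^{n-1}(R_0)$. Applying Theorem \ref{thm4.5} yields $F_t(M^{n-1})\subset\bbs^{n-1}(R(t))$ for each $t\in[0,\delta)$, together with the inequalities $\delta\le T_1$ in Case~(1) and $\delta\le T_2$ in Case~(2), as well as the conclusion $\delta=+\infty$ with $F_t\equiv F_0$ in Case~(3). Case~(3) is thus immediate.

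Second, I would exploit the codimension-one feature: since $F_t(M^{n-1})\subset\bbs^{n-1}(R(t))$, the map $F_t$ is an immersion if and only if $R(t)\in(0,+\infty)$. This follows from Proposition \ref{prop4.4} combined with the uniqueness in Theorem \ref{exiuni}, which forces $F_t$ to coincide, up to the tangential diffeomorphism family constructed there, with the homothety $(R(t)/R_0)F_0$ of the initial hypersphere. Scaling by a positive factor preserves the immersion property, while $R(t)=0$ collapses everything to the origin and $R(t)=+\infty$ destroys boundedness. Consequently $\delta$ coincides with the extinction time in Case~(1) and the blow-up time in Case~(2) of the autonomous scalar ODE \eqref{R}.

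Third, in Cases~(1) and~(2), Propositions \ref{prop4.1} and \ref{prop4.2} together with Corollary \ref{cor1} ensure strict monotonicity of $R^2(t)$: decreasing toward $0$ in Case~(1) and increasing toward $+\infty$ in Case~(2). Combining this monotonicity with the upper bound $\delta\le T_i$ from Theorem \ref{thm4.5} forces $\delta=T_i$, and the corresponding limits $|F_t|^2\to 0$ or $|F_t|^2\to+\infty$ then follow from the definition of $\delta$ as the extinction/blow-up time together with the limit statements already furnished by Theorem \ref{thm4.5}. The main obstacle I anticipate is the sharp identification of this extinction/blow-up time with $T_i$: ruling out the strict inequality $\delta<T_i$ requires the ODE-level analysis already internal to the proof of Theorem \ref{thm4.5}, specifically the step that extracts the precise endpoint behavior from the estimates on $(e^{-aR^2/m})'$.
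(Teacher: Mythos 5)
Your overall structure matches the paper's proof of this corollary: reduce to the radius ODE via Proposition~\ref{prop4.4} and Theorem~\ref{thm4.5}, use the codimension-one observation that $\bbs^{n-1}(R)$ is a smooth immersion for every $R>0$ so that $\delta$ equals the extinction/blow-up time $\tau$ of the scalar ODE \eqref{R}, and then try to pin down $\tau=T_i$ exactly. You correctly flag the sharp identification $\delta=T_i$ (rather than merely $\delta\le T_i$) as the weak link, but the resolution you propose --- that it follows from ``the ODE-level analysis already internal to the proof of Theorem~\ref{thm4.5}'' --- does not work, because that analysis only produces a one-sided, non-sharp bound.

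Concretely, in Case~(1) the argument before Theorem~\ref{thm4.5} uses
$\bigl(e^{-aR^2/m}\bigr)'=\frac{2ab}{m}\bigl(\frac{c}{b}m-R^2\bigr)>\frac{2ab}{m}\bigl(\frac{c}{b}m-R_0^2\bigr)$,
and this inequality is strict for every $t>0$ precisely because $R^2(t)<R_0^2$ strictly. Integrating it up to the extinction time $\tau$ (where $R^2(\tau)=0$) gives $1-e^{-aR_0^2/m}>\frac{2ab}{m}\bigl(\frac{c}{b}m-R_0^2\bigr)\tau$, i.e.\ $\tau<T_1$; the analogous calculation gives $\tau<T_2$ in Case~(2). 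Thus ``monotonicity plus $\delta\le T_i$'' does not force $\delta=T_i$ --- those two facts are consistent with, and in fact imply, the strict inequality $\delta<T_i$. The paper's own proof skips exactly this point: the sentence ``for every $t<T_1$, $F_t$ remains an immersion'' is the unproved assertion $\delta\ge T_1$, so the argument is circular there as well. Your instinct about where the difficulty lies is sharper than the paper's exposition, but the gap is real and cannot be closed by the estimates in Theorem~\ref{thm4.5}; the conclusion those estimates actually support is $\delta<T_i$ with $\lim_{t\to\delta}|F_t|^2=0$ (resp.\ $+\infty$), reached strictly before $T_1$ (resp.\ $T_2$).
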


\begin{proof}
Clearly, by Theorem \ref{thm4.5}, $F_t$ is always a standard hypersphere centered at the origin for all $t\in[0,\delta)$. Note that all the standard hyperspheres do not have any singularity point.

Case (1) $b|F_0|^2<(n-1)c$. Since, for every $t<T_1$, $F_{t}$ remains an immersion, it must hold by Proposition \ref{prop4.1} that $b|F_t|^2<(n-1)c$, which can be in turn viewed as a new initial submanifold of \eqref{flow'}. This allows us to use the conclusion (1) of Theorem \ref{thm4.5} to obtain that $\delta=T_1$ and $\lim\limits_{t\to T_1}|F|^2=0$.

Case (2) $b|F_0|^2>(n-1)c$. Then the similar argument to that in Case (1) will show the conclusion.

Case (3) $b|F_0|^2=(n-1)c$. Then the conclusion comes directly from Theorem \ref{thm4.5}(3).
\end{proof}

In Corollary \ref{cor4.6}, let $n=m+p$, $c=\fr m{m+p-1}$, $b=1$, and $a=\frac{m+p-1}m$. Then, by \eqref{R}, we can obtain the following initial problem of ODE:
\be\label{R'}
\frac12\frac{dR^2}{dt}=e^{\frac1mR^2}(R^2-m),\quad R^2(0)=R_0^2
\ee
for a given positive number $R_0$.

On the other hand, from the flow \eqref{flow}, we also have
$$
\frac12\pp{}{t}|F|^2=\emf(\frac12\Delta|F|^2+|F|^2-m).
$$
It then follows that
\begin{align}
\frac12\pp{}{t}(|F|^2-R^2)=&\frac12\emf\Delta(|F|^2-R^2)+\emf(|F|^2-m)-e^{\frac1mR^2}(R^2-m)\label{a2}\\
=&\frac12\emf\Delta(|F|^2-R^2)+\emf(|F|^2-R^2)+(\emf-e^{\frac1mR^2})(R^2-m)\label{a1},
\end{align}
where $R=R(t)$ satisfies \eqref{R'}.

The above equations \eqref{a2} and \eqref{a1} will be used to prove the following proposition:
\begin{prop}\label{prop4.8}
Let $F:M^m\times[0,\delta)\to\bbr^{m+p}$ and $R^2=R^2(t)$ be determined respectively by \eqref{flow} and \eqref{R'}. Suppose that $F_t$ is an immersion for all $t\in[0,\delta)$. Denote
\be\label{T'}
T_1=\fr m{2(m-R_0^2)}\left(1-e^{-\frac{1}{m}R_0^2}\right),\quad
T_2=\fr m{2(R_0^2-m)}e^{-\frac{1}{m}R_0^2}.
\ee

(1) If $|F_0|^2<m$ then, for any $R^2_0\in\left(\max|F_0|^2,\frac{1}{2}(m+\max|F_0|^2)\right)$ and any $\veps\in(0,R^2_0-\max|F_0|^2)$, it holds that $|F|^2-R^2<-\veps$ for all $t\in[0,\delta)\cap[0, T_1)$;

(2) If $|F_0|^2>m$ then, for any $R^2_0\in\left(\frac{1}{2}(m+\min|F_0|^2),\min|F_0|^2\right)$ and any $\veps\in(0,\min|F_0|^2-R^2_0)$, it holds that $|F|^2-R^2>\veps$ for all $t\in[0,\delta)\cap[0,T_2)$.
\end{prop}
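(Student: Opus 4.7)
The plan is to run a standard parabolic maximum principle argument on the scalar function $u := |F|^2 - R^2$, using equation \eqref{a2} rather than \eqref{a1}. The decisive observation is that the function $g(r) := e^{r/m}(r-m)$ satisfies $g'(r) = \frac{r}{m}e^{r/m} \geq 0$ on $[0,\infty)$, so $g$ is strictly increasing on $(0,\infty)$. Rewriting \eqref{a2} as
$$\frac12\pp{u}{t} = \frac12\emf\Delta u + g(|F|^2) - g(R^2),$$
the nonlinear zero-order term has the right sign whenever $|F|^2$ and $R^2$ compare the same way as $u$ and $0$.

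For part (1), first check the initial condition: by $\veps < R_0^2 - \max|F_0|^2$, we have $u(\cdot,0) \leq \max|F_0|^2 - R_0^2 < -\veps$ on $M$. Assume for contradiction that the conclusion fails, and let $t_1 \in (0,\delta)\cap(0,T_1)$ be the first time at which $\max_M u = -\veps$, attained at some $u_1 \in M$. Since $R^2(t_1)$ is independent of the spatial variable, $u_1$ is also a spatial maximum of $|F|^2$, whence $\Delta u(u_1,t_1) \leq 0$; moreover, the first-contact nature of $t_1$ gives $\partial_t u(u_1,t_1) \geq 0$. At this point $|F|^2 = R^2 - \veps < R^2$ (with both quantities in $[0,\infty)$), so the strict monotonicity of $g$ yields $g(|F|^2) - g(R^2) < 0$. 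Plugging into the rewritten \eqref{a2} gives $\partial_t u(u_1,t_1) < 0$, contradicting $\partial_t u(u_1,t_1) \geq 0$.

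For part (2), the argument is perfectly symmetric: apply the minimum principle. With $\veps < \min|F_0|^2 - R_0^2$, we have $u(\cdot,0) \geq \min|F_0|^2 - R_0^2 > \veps$. If the conclusion fails, at the first-contact pair $(u_1,t_1)$ where $\min_M u = \veps$ one has $\Delta u(u_1,t_1) \geq 0$ and $\partial_t u(u_1,t_1) \leq 0$. But now $|F|^2 = R^2 + \veps > R^2$, so $g(|F|^2) - g(R^2) > 0$, forcing $\partial_t u(u_1,t_1) > 0$, a contradiction.

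The bulk of the argument is therefore quite clean; the main obstacle is really book-keeping rather than analytic depth. Specifically, one must confirm that $R^2(t) > 0$ throughout $[0,\delta)\cap[0,T_1)$ (so that $g$ is strictly increasing at $R^2$ and the strict inequality $g(R^2-\veps) < g(R^2)$ is genuine), which follows from the ODE \eqref{R'} together with $t < T_1$. The hypotheses $R_0^2 \in (\max|F_0|^2, \tfrac12(m+\max|F_0|^2))$ and the matching bound in part (2) play no role in the maximum principle itself; they merely guarantee that the range of admissible $\veps$ is nonempty and that $R_0^2$ lies on the correct side of $m$ so that \eqref{R'} behaves as expected. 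The same comparison scheme recovers the companion statements under mild sign checks on $g$.
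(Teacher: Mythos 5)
Your proof is correct and takes a somewhat different, more unified route than the paper's. Both arguments compare $|F|^2$ against $R^2(t)$ via a parabolic maximum principle that turns on the sign of the zeroth-order term, but the paper treats the two cases asymmetrically: for part (1) it derives the differential inequality $\partial_t u\leq\emf\Delta u$ from \eqref{a2} and the mean value theorem (valid wherever $u<0$), applies the weak maximum principle to get $u\le\max u_0<-\veps$ on that sub-interval, and then bootstraps by continuity to rule out a first contact with the level $-\veps$; for part (2) it computes directly at a first-contact point using the alternative decomposition \eqref{a1}, whose term $(\emf-e^{R^2/m})(R^2-m)$ requires the separate observation that $R^2(t_0)>m$. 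You instead rewrite \eqref{a2} as $\tfrac12\partial_t u=\tfrac12\emf\Delta u+g(|F|^2)-g(R^2)$ with $g(r)=e^{r/m}(r-m)$ and run one first-contact argument covering both cases, with the strict monotonicity of $g$ on $(0,\infty)$ providing the decisive sign; this removes the continuity bootstrap in (1) and the separate comparison of $R^2$ with $m$ in (2). Your side-remarks are also accurate: positivity of $R^2(t)$ on the relevant interval (needed so that the MVT interval $(|F|^2,R^2)$ or $(R^2,|F|^2)$ lies in $(0,\infty)$, making $g$ strictly increasing there) follows from the ODE \eqref{R'} together with $|F|^2\geq 0$, and the upper bound $R_0^2<\tfrac12(m+\max|F_0|^2)$ only serves to keep $R_0^2<m$ and to leave a nonempty $\veps$-range.
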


\begin{proof} (1) If $|F_0|^2<m$, then by the compactness and connectedness of $M$, we have $\max|F_0|^2<m$.
For any $R^2_0\in\left(\max|F_0|^2,\frac{1}{2}(m+\max|F_0|^2)\right)$ and any $\veps\in(0,R^2_0-\max|F_0|^2)$, we have $|F_0|^2-R_0^2\leq\max|F_0|^2-R_0^2<-\veps$. Moreover $R^2=R^2(t)$ is well-defined in $[0,T_1)$ due to the argument given before Theorem \ref{thm4.5}, and the continuity of the function $|F|^2-R^2$ assures that $|F|^2-R^2<-\veps$ for sufficiently small $t>0$. On the other hand, as long as we have $|F|^2-R^2<0$ on some interval $[0,t_0)$, then by \eqref{a2} and the Lagrange mean value theorem, we obtain that
$$
\pp{}{t}(|F|^2-R^2)<\emf\Delta(|F|^2-R^2),\quad \forall t\in[0,t_0).
$$
From the maximum principle of parabolic equations, it follows that
$$
|F|^2-R^2\leq\max|F_0|^2-R_0^2<-\veps
$$
on $M^m\times[0,t_0)$.

If the conclusion (1) is wrong, then there exists a first time $t_0>0$ such that $$\max|F_{t_0}|^2-R^2(t_0)=|F|^2(u_0,t_0)-R^2(t_0)=-\veps<0$$
for some point $u_0\in M^m$. Since $|F|^2-R^2$ is a continuous function of the time $t$, there must also be some $t_1>t_0$ such that $|F|^2-R^2<0$ on the whole interval $[0,t_1)$. According to the previous argument, it must hold that $|F|^2-R^2<-\veps$ for all $t\in[0,t_1)$. In particular, we can obtain a strict inequality $|F_{t_0}|^2-R^2(t_0)<-\veps$, which contradicts to the equality that $\max|F_{t_0}|^2-R^2(t_0)=-\veps$ and the conclusion is thus proved.

(2) If $|F_0|^2>m$ then, once again, we can use the compactness and connectedness of $M$ to conclude that $\min|F_0|^2>m$.
For any $R^2_0\in\left(\frac{1}{2}(m+\min|F_0|^2),\min|F_0|^2\right)$ and any $\veps\in(0,\min|F_0|^2-R^2_0)$, we have $|F_0|^2-R_0^2\geq \min|F_0|^2-R_0^2>\veps$. Moreover, the function $R^2=R^2(t)$ is well-defined in $[0,T_2)$ due to the argument given before Theorem \ref{thm4.5}. Now we are able to claim that $|F|^2-R^2>\veps$ for all $t\in[0,\delta)\cap[0,T_2)$. Otherwise there must exist a first time $t_0>0$ and a corresponding point
$u_0\in M^m$ such that
\be\label{adeq1}
\min|F_{t_0}|^2-R^2(t_0)=(|F|^2-R^2)(u_0,t_0)=\veps.
\ee
Note, by the argument given before Theorem \ref{thm4.5}, it follows that $R^2(t_0)-m>0$. Hence by \eqref{a1} and \eqref{adeq1} we have
\begin{align*}
0\geq &\frac12\pp{}{t}(|F|^2-R^2)(u_0,t_0)\\
=&\frac12e^{\frac1m|F|^2(u_0,t_0)}\Delta(|F|^2-R^2)\mid_{(u_0,t_0)}+e^{\frac1m|F|^2(u_0,t_0)}(|F|^2-R^2)(u_0,t_0)\\
&+(\emf-e^{\frac1mR^2})\mid_{(u_0,t_0)}(R^2(t_0)-m)\\
\geq &e^{\frac1m|F|^2(u_0,t_0)}\veps+e^{\frac1mR^2(t_0)}(e^{\frac1m\veps}-1)(R^2(t_0)-m)>0.
\end{align*}
which is impossible! This completes the proof of the conclusion (2).
\end{proof}

Combining Corollary \ref{cor4.6} and Proposition \ref{prop4.8} we know that, the maximal time interval for the existence of the regular solution to the flow \eqref{flow} with $|F_0|^2\neq m$ is not larger than that of the origin-centered standard hyperspheres whose radii satisfy the equation \eqref{R'},
while the maximal existence time of the latter with $|R_0|^2\neq m$ is finite by Corollary \ref{cor4.6}, thus we can easily obtain the finiteness of singular time as follows:

\begin{cor}\label{cor4.9}
Let $F:M^m\times[0,\delta)\to\bbr^{m+p}$ be a solution to the flow \eqref{flow}. Suppose that, for each $t\in[0,\delta)$, $F_t$ is an immersion. If $|F_0|^2\neq m$ everywhere on $M^m$, then $\delta<+\infty$.
\end{cor}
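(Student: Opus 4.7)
The plan is to combine Proposition \ref{prop4.8} with the scalar ODE analysis that precedes Theorem \ref{thm4.5} in order to squeeze $|F_t|^2$ between geometrically inadmissible values in finite time. Because $M^m$ is compact and connected and $|F_0|^2\neq m$ everywhere, the continuous function $|F_0|^2-m$ has constant sign on $M^m$, so the hypothesis splits into exactly two disjoint subcases: either $\max|F_0|^2<m$ or $\min|F_0|^2>m$. I would treat these separately by comparing $|F|^2$ with a suitable real-valued radius function $R^2(t)$ solving \eqref{R'}.

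In the first subcase, I would fix any $R_0^2$ in the (nonempty) range $(\max|F_0|^2,\tfrac12(m+\max|F_0|^2))$ and any $\veps\in(0,R_0^2-\max|F_0|^2)$. Since $R_0^2<m$, the analysis of the ODE \eqref{R'} carried out just before Theorem \ref{thm4.5} (equivalently, Corollary \ref{cor4.6}(1) specialized with $a=(m+p-1)/m$, $b=1$, $c=m/(m+p-1)$) produces a solution $R^2(t)$ on $[0,T_1)$ with $\lim_{t\to T_1^-}R^2(t)=0$, where $T_1<+\infty$ is as in \eqref{T'}. Proposition \ref{prop4.8}(1) then gives the pointwise bound
\[
|F|^2<R^2-\veps\quad\text{on }M^m\times\bigl([0,\delta)\cap[0,T_1)\bigr).
\]
If one had $\delta>T_1$, then picking $t<T_1$ close enough to $T_1$ would yield $R^2(t)<\veps$ and hence force $|F|^2<0$ somewhere, which is impossible; therefore $\delta\le T_1<+\infty$.

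In the second subcase I would symmetrically pick $R_0^2\in(\tfrac12(m+\min|F_0|^2),\min|F_0|^2)$ and $\veps\in(0,\min|F_0|^2-R_0^2)$, so that $R_0^2>m$. Now the same ODE gives $R^2(t)\to+\infty$ as $t\to T_2^-$ with $T_2<+\infty$ from \eqref{T'}, and Proposition \ref{prop4.8}(2) delivers
\[
|F|^2>R^2+\veps\quad\text{on }M^m\times\bigl([0,\delta)\cap[0,T_2)\bigr).
\]
If $\delta>T_2$ were possible, then $F_{T_2}$ would be a smooth immersion of the compact $M^m$, so $\max|F_{T_2}|^2$ would be finite; however the inequality combined with $R^2(t)\to+\infty$ forces $\max|F_t|^2\to+\infty$ as $t\to T_2^-$, contradicting continuity of $t\mapsto\max|F_t|^2$ at $t=T_2$. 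Hence $\delta\le T_2<+\infty$.

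The plan contains no genuinely new estimate: the comparison with the scalar sphere ODE has already been established in Proposition \ref{prop4.8}, and the finiteness of $T_1$ and $T_2$ has been read off from \eqref{R'}. The only point requiring any care is the initial sign dichotomy, which relies simultaneously on the connectedness and compactness of $M^m$; once that is in place, the corollary is obtained by matching the chosen $R_0^2$ to the correct branch of Proposition \ref{prop4.8} and letting $R^2(t)$ exhaust either $\{R^2=0\}$ or $\{R^2=+\infty\}$ at the finite time $T_1$ or $T_2$.
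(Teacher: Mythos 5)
Your proposal is correct and follows exactly the route the paper intends: the paper's argument for Corollary \ref{cor4.9} is the one-sentence remark that combining Corollary \ref{cor4.6} (finite lifetime of the sphere solutions of \eqref{R'}) with Proposition \ref{prop4.8} (the pointwise comparison $|F|^2\lessgtr R^2\mp\veps$) forces the lifetime of the regular solution to be bounded above by $T_1$ or $T_2$. You have simply supplied the missing details — the sign dichotomy from compactness/connectedness, the extraction of a contradiction from $|F|^2<R^2-\veps$ as $R^2\to 0$ near $T_1$, and from $|F|^2>R^2+\veps$ as $R^2\to+\infty$ near $T_2$ — all of which are correct and match the paper's intent.
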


Corollary \ref{cor4.9} will be used for the blow-up argument in the next section.

\section{Higher-order derivative estimates and the blow-up argument}

This section is one of the key parts in the proof of the main theorem of the present paper. First of all, note that the existence and the uniqueness of the maximal solution are given by Theorem \ref{exiuni}. In what follows, after giving some necessary estimates both for the higher order derivatives of the second fundamental form and for those of the solution $F$ itself, we are devoted to prove the following finite time blow-up theorem:

\begin{thm}\label{thm5.1}
Let $F:M^m\times[0,T)\to\bbr^{m+p}$, $T\leq+\infty$, be a maximal regular solution of the curvature flow \eqref{flow} such that $|F_0|^2\neq m$ everywhere. Then $T<+\infty$ and either $\lim\limits_{t\to T}\max|F|^2=+\infty$ or $\lim\limits_{t\to T}\max|h|^2=+\infty$.
\end{thm}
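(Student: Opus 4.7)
The plan is to combine finiteness of $T$ with a standard blow-up-or-extendibility dichotomy, the latter proved by contradiction.

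The finiteness $T<+\infty$ is immediate from Corollary \ref{cor4.9}: a maximal regular solution to \eqref{flow} is precisely a solution on $[0,\delta)$ on which every $F_t$ is an immersion, and $|F_0|^2\neq m$ is the exact hypothesis of that corollary. For the blow-up alternative, I would argue by contrapositive: assume that on $M^m\times[0,T)$ one has $|F|^2\le C_1$ and $|h|^2\le C_2$ for constants $C_1,C_2$. Then $e^{|F|^2/m}\in[1,e^{C_1/m}]$ and $|\nabla|F|^2|^2=4|F^\top|^2\le 4C_1$ are controlled. Plugging these into \eqref{evogij} (with $a=b=c=1$) gives $|\pp{}{t}g_{ij}|\le Kg_{ij}$, so Gronwall yields a uniform two-sided equivalence between $g(t)$ and $g_0$ on $[0,T)$. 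In particular, all parabolic PDEs on tensor bundles remain uniformly parabolic and volumes stay bounded.

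The main obstacle is establishing uniform $C^k$ bounds on $h$ for every $k$. I would carry out the Bernstein-type induction familiar from Huisken \cite{hui84} and Andrews--Baker \cite{a-b}: starting from \eqref{ht2} (specialized to $a=b=c=1$) and differentiating covariantly, one obtains for each $k$ an evolution inequality of the schematic form
\begin{equation*}
\pp{}{t}|\nabla^k h|^2 \le e^{\frac1m|F|^2}\Bigl(\Delta|\nabla^k h|^2-2|\nabla^{k+1}h|^2+P_k\bigl(g,F,h,\nabla h,\ldots,\nabla^k h\bigr)\Bigr),
\end{equation*}
where $P_k$ is a $g$-polynomial in its arguments. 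The extra ingredients peculiar to the Gaussian MCF, namely the exponential $e^{|F|^2/m}$ and the factors $F^\top,F^\bot$ coming from the $bF$-term, are harmless under our assumption: they are bounded, and their $g$-covariant derivatives reduce by the Gauss formula to polynomials in $g$, $h$, and lower covariant derivatives of $h$. The classical trick of applying the maximum principle to $(\alpha_k+|\nabla^{k-1}h|^2)|\nabla^k h|^2$ with $\alpha_k$ chosen sufficiently large (using the inductive bound on $|\nabla^{k-1}h|$) then yields $\sup_{M\times[0,T)}|\nabla^k h|^2\le C_k$ for every $k\ge 0$.

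Finally, metric equivalence together with these $C^k$ bounds gives uniform bounds on $F(\cdot,t)$ in $C^\infty(M^m,\bbr^{m+p})$. Since $|\pp{}{t}F|=|e^{|F|^2/m}(H+F^\bot)|$ is uniformly bounded on $M^m\times[0,T)$, the family $\{F(\cdot,t)\}$ is uniformly Lipschitz in $t$ with values in each $C^k$, so it converges as $t\to T$ to a smooth immersion $F_T:M^m\to\bbr^{m+p}$. Applying the short-time existence and uniqueness result Theorem \ref{exiuni} with initial datum $F_T$ produces a smooth solution of \eqref{flow} on $[T,T+\varepsilon)$ which, by uniqueness and a standard concatenation, extends the maximal regular solution strictly beyond $T$. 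This contradicts the maximality of $T$, so at least one of $\max|F|^2$, $\max|h|^2$ must fail to remain bounded as $t\to T$, completing the proof.
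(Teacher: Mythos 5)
Your overall strategy matches the paper's: deduce $T<+\infty$ from Corollary \ref{cor4.9}, then run a contradiction argument in which boundedness of $|F|^2$ and $|h|^2$ yields uniform $C^\infty$ control, convergence of $F(\cdot,t)$ to a smooth limit immersion as $t\to T$, and an extension via Theorem \ref{exiuni} contradicting maximality. Two points are worth flagging.

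First, for the derivative estimates you invoke the Huisken-style multiplicative weight $(\alpha_k+|\nabla^{k-1}h|^2)|\nabla^k h|^2$, whereas the paper (Proposition \ref{prop5.7}) uses the Andrews--Baker time-weighted quantity $G_l=t^l|\nabla^l h|^2+lt^{l-1}|\nabla^{l-1}h|^2$, exploiting finiteness of $T$ to cover the interval by finitely many subintervals of controlled length. Both are standard and both work here, since the exponential factor satisfies $e^{|F|^2/m}\ge 1$ and the extra terms coming from $F$ and $\nabla|F|^2$ are, as you note, bounded polynomials in $g,F,\nabla F,h,\nabla h,\ldots$ once $|F|^2$ and $|h|^2$ are controlled. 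So this is a minor stylistic variation rather than a different route.

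Second, the step from ``uniform $C^k$ bounds on $h$ plus $\sup|\partial_t F|<\infty$'' to ``$F(\cdot,t)$ converges in $C^\infty$ to a smooth limit'' is stated too quickly. Boundedness of $|\partial_t F|$ only gives $t$-Lipschitz control in $C^0$; to conclude genuine (not merely subsequential) $C^k$ convergence you either need to bound $\partial_t$ of higher derivatives measured against a fixed background connection, or appeal to interpolation between $C^0$-Lipschitz in $t$ and uniform $C^{k+1}$ bounds. The paper handles this carefully: it introduces a fixed metric $\mathring g$ and the difference tensor $\mathring T=\mathring\nabla-\nabla$, proves $|\mathring\nabla^l\mathring T|$ bounded (Lemma \ref{lem5.10}), bounds $|\mathring\nabla^l F|_{\mathring g}$ (Proposition \ref{prop5.11}), and then derives an explicit formula for $\partial_t\mathring\nabla^l F$ (Lemma \ref{lem5.13}) from which the Cauchy criterion in each $C^k$ follows directly. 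Your phrase ``metric equivalence together with these $C^k$ bounds'' points in this direction, but the key ingredient — uniform control of $\partial_t\mathring\nabla^l F$ for all $l$, not just $l=0$ — should be spelled out to close the argument. Finally, note that for the flow \eqref{flow} the velocity is $e^{|F|^2/m}(H+F)$, not $e^{|F|^2/m}(H+F^\bot)$ (that is \eqref{flow0}); immaterial here since both are bounded under your assumptions, but worth keeping straight.
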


The following Young's inequality is frequently used in our estimation later:

\begin{lem}[Young's inequality]\label{young}
Let $a$ and $b$ be two nonnegative real numbers and $p$ and $q$ be positive real numbers such that $1/p+1/q=1$. Then
$$ab\leq\veps^p{\frac{ a^{p}}{p}}+\frac1{\veps^q}{\frac{b^{q}}{q}},\quad\forall\veps>0.$$
The equality holds if and only if $\veps^{p+q}a^p=b^q$. In particular, we have the following so-called Peter-Paul inequality:
$$2ab\leq\veps a^2+\fr1\veps b^2$$
for any $\veps>0$.
\end{lem}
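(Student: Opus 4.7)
The plan is to derive the parameter-free form of Young's inequality, namely $xy\leq x^p/p+y^q/q$ for $x,y\geq 0$ and conjugate exponents $p,q$, and then to obtain the stated $\veps$-weighted form by the rescaling substitution $x=\veps a$, $y=b/\veps$. The parameter-free form is classical, and the cleanest route I know is through the concavity of $\log$ on $(0,\infty)$.

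First I would dispatch the trivial case $ab=0$, where both sides are nonnegative and the inequality holds automatically (and the equality condition is easily checked separately). So assume $a,b,\veps>0$. For positive reals $x,y$, the strict concavity of $\log$ combined with $\fr1p+\fr1q=1$ being a convex combination gives
\begin{equation*}
\log\left(\fr1p x^p+\fr1q y^q\right)\geq \fr1p\log x^p+\fr1q\log y^q=\log x+\log y=\log(xy),
\end{equation*}
and exponentiating both sides produces $xy\leq x^p/p+y^q/q$. Strict concavity of $\log$ forces the inequality to be an equality precisely when $x^p=y^q$.

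Now specialize $x=\veps a>0$ and $y=b/\veps>0$. This immediately yields
\begin{equation*}
ab=xy\leq \fr{(\veps a)^p}{p}+\fr{(b/\veps)^q}{q}=\veps^p\fr{a^p}{p}+\fr1{\veps^q}\fr{b^q}{q},
\end{equation*}
and the equality condition $x^p=y^q$ becomes $\veps^p a^p=b^q/\veps^q$, i.e., $\veps^{p+q}a^p=b^q$, as asserted. For the Peter--Paul form, I would specialize to $p=q=2$: the inequality becomes $ab\leq \veps^2 a^2/2+b^2/(2\veps^2)$, and multiplying by $2$ and renaming the positive parameter $\veps^2$ as $\veps$ (which still ranges over all of $(0,\infty)$) gives $2ab\leq \veps a^2+b^2/\veps$.

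The argument is entirely routine and the only place requiring genuine care is the equality case, which however is read off directly from the strict concavity of $\log$; there is no substantive obstacle.
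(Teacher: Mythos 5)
The paper states this lemma without proof, treating it as a classical fact, so there is no in-paper argument to compare against; your derivation is a correct and standard proof. Deriving the parameter-free form from the concavity of $\log$ and then obtaining the weighted version by the substitution $x=\veps a$, $y=b/\veps$ is exactly the textbook route, the equality case $\veps^{p+q}a^p=b^q$ falls out correctly from strict concavity, and the specialization $p=q=2$ together with the renaming $\veps^2\mapsto\veps$ gives the Peter--Paul form as stated.
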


\begin{lem}\label{lem-nbla} For any $l\geq 1$ it holds that
\be\label{nbla}
\nabla^l\left(\eamf\right)=\eamf\left(\sum_{p=1}^l\sum_{r_1+\cdots+r_p =l}C_{r_1\cdots r_p}\nabla^{r_1}|F|^2\otimes \cdots\otimes\nabla^{r_p}|F|^2\right),
\ee
where $C_{r_1\cdots r_p}$ and $r_1,\cdots,r_p\geq 1$ are constants. In particular,
\be
\left|\nabla^l\left(\eamf\right)\right|^2=e^{\fr{2a}m|F|^2}\sum_{p=2}^{2l}\sum_{r_1+\cdots+r_p=2l}\nabla^{r_1}|F|^2* \cdots*\nabla^{r_p}|F|^2,\quad l\geq 1.
\ee
\end{lem}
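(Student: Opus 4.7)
The plan is to prove \eqref{nbla} by induction on $l\geq 1$, and then deduce the formula for $\left|\nabla^l(e^{\frac{a}{m}|F|^2})\right|^2$ from \eqref{nbla} by taking an inner product.

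For the base case $l=1$, one simply computes
\[
\nabla\left(e^{\frac{a}{m}|F|^2}\right)=\frac{a}{m}e^{\frac{a}{m}|F|^2}\nabla|F|^2,
\]
which has exactly the form claimed, with the single partition $r_1=1$ and coefficient $C_{1}=a/m$. For the inductive step, assume \eqref{nbla} holds for some $l\geq 1$ and apply $\nabla$ to both sides. By the Leibniz rule, $\nabla$ either hits the prefactor $e^{\frac{a}{m}|F|^2}$, in which case it produces an extra factor $\frac{a}{m}\nabla|F|^2$ (raising the number of factors $p$ to $p+1$ while keeping the new exponent $r_{p+1}=1$), or it hits one of the $p$ existing factors $\nabla^{r_j}|F|^2$ (raising its exponent from $r_j$ to $r_j+1$). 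In either case the total sum of exponents increases by exactly $1$, yielding an expression of the form
\[
e^{\frac{a}{m}|F|^2}\left(\sum_{p=1}^{l+1}\sum_{r_1+\cdots+r_p=l+1}\widetilde C_{r_1\cdots r_p}\,\nabla^{r_1}|F|^2\otimes\cdots\otimes\nabla^{r_p}|F|^2\right)
\]
for suitable constants $\widetilde C_{r_1\cdots r_p}$ obtained combinatorially from the $C_{r_1\cdots r_p}$'s. This completes the induction.

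For the second identity, I would simply take the $g$-inner product of the right-hand side of \eqref{nbla} with itself. Since each factor in \eqref{nbla} is a sum indexed by a partition with $p\geq 1$ parts, the inner product yields a double sum of terms in which the total number of $\nabla^r|F|^2$-factors is $p+q\geq 2$, and the total order of differentiation is exactly $l+l=2l$. Relabeling $p+q$ as the new summation index running from $2$ to $2l$, and reindexing the partitions accordingly (absorbing all remaining numerical factors into the $*$-notation defined in Section 3, since these constants depend only on dimension), gives precisely the stated formula.

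There is no real obstacle here; the result is a direct Faà di Bruno-type expansion for iterated covariant derivatives of $\phi(|F|^2)$ with $\phi(s)=e^{as/m}$, and the only care needed is in the bookkeeping of partitions and constants, which is why the lemma is stated in the compact $*$-notation rather than with explicit coefficients.
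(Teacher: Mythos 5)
Your proof is correct and follows essentially the same route as the paper: induction on $l$, with the base case $l=1$ giving the single factor $\frac{a}{m}\nabla|F|^2$, and the inductive step applying the Leibniz rule so that $\nabla$ either acts on the exponential prefactor (appending a new factor $\nabla|F|^2$, turning a $p$-fold product into a $(p+1)$-fold one) or on one of the existing $\nabla^{r_j}|F|^2$ factors (raising $r_j$ to $r_j+1$), and in both cases the total order increases to $l+1$. Your derivation of the squared-norm formula by pairing the two copies of the expansion and relabeling $p+q\in\{2,\dots,2l\}$ is exactly the intended reading of the $*$-notation, which the paper leaves implicit.
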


\begin{proof} We shall use the method of induction.
For $l=1$, we have
\begin{align*}
\nabla\left(\eamf\right)=&\fr am\eamf\nabla|F|^2\\
\equiv&\eamf\left(\sum_{p=1}^1\sum_{r_1+\cdots+r_p=1}C_{r_1\cdots r_p}\nabla^{r_1}|F|^2\otimes\cdots\otimes\nabla^{r_p}|F|^2\right)
\end{align*}
with $C_1=\fr am $.

Suppose that formula \eqref{nbla} is true for $l=k\geq 1$, that is
$$
\nabla^k\left(\eamf\right)=\eamf\left(\sum_{p=1}^k\sum_{r_1+\cdots+r_p=k}C_{r_1\cdots r_p}\nabla^{r_1}|F|^2\otimes \cdots\otimes\nabla^{r_p}|F|^2\right).
$$
Then for $l=k+1$, we have
\begin{align*}
&\nabla^{k+1}\left(\eamf\right)=\nabla\left(\nabla^k\left(\eamf\right)\right)\\
=&\nabla\left(\eamf\right)\otimes\left(\sum_{p=1}^k\sum_{r_1+\cdots+r_p=k}C_{r_1\cdots r_p}\nabla^{r_1}|F|^2\otimes \cdots\otimes\nabla^{r_p}|F|^2\right)\\
&+\eamf\sum_{p=1}^k\sum_{r_1+\cdots+r_p=k}C_{r_1\cdots r_p}\nabla\left(\nabla^{r_1}|F|^2\otimes\cdots\otimes\nabla^{r_p}|F|^2\right)\\
=&\fr am\eamf\nabla|F|^2\otimes\left(\sum_{p=1}^k\sum_{r_1+\cdots+r_p=k}C_{r_1\cdots r_p}\nabla^{r_1}|F|^2\otimes \cdots\otimes\nabla^{r_p}|F|^2\right)\\
&+\eamf\sum_{p=1}^k\sum_{r_1+\cdots+r_p=k}C_{r_1\cdots r_p}\left(\nabla^{r_1+1}|F|^2\otimes\cdots\otimes\nabla^{r_p}|F|^2 +\cdots+\nabla^{r_1}|F|^2\otimes\cdots\otimes\nabla^{r_p+1}|F|^2\right)\\
=&\eamf\left(\sum_{p=1}^k\sum_{r_1+\cdots+r_p=k}\fr am C_{r_1\cdots r_p}\nabla|F|^2\otimes\nabla^{r_1}|F|^2\otimes \cdots\otimes\nabla^{r_p}|F|^2\right)\\
&+\eamf\sum_{p=1}^k\sum_{r_1+\cdots+r_p=k+1}C_{r_1\cdots r_p}\left(\nabla^{r_1}|F|^2\otimes\cdots\otimes\nabla^{r_p}|F|^2\right)\\
=&\eamf\left(\sum_{p=1}^k\sum_{r_1=1, r_2+\cdots+r_{p+1}=k}C_{r_1\cdots r_{p+1}}\nabla^{r_1}
|F|^2\otimes\nabla^{r_2}|F|^2\otimes\cdots\otimes\nabla^{r_{p+1}}|F|^2\right)\\
&+\eamf\sum_{p=1}^k\sum_{r_1+\cdots+r_p =k+1}C_{r_1\cdots r_p}\left(\nabla^{r_1}|F|^2\otimes\cdots\otimes\nabla^{r_p}|F|^2\right)\\
=&\eamf\left(\sum_{p=2}^{k+1}\sum_{r_1=1, r_2+\cdots+r_p=k}C_{r_1\cdots r_p}\nabla^{r_1}
|F|^2\otimes\nabla^{r_2}|F|^2\otimes\cdots\otimes\nabla^{r_p}|F|^2\right)\\
&+\eamf\sum_{p=1}^k\sum_{r_1+\cdots+r_p=k+1,r_1\geq 2}C_{r_1\cdots r_p}\left(\nabla^{r_1}|F|^2\otimes\cdots\otimes\nabla^{r_p}|F|^2\right)\\
=&\eamf\sum_{p=1}^{k+1}\sum_{r_1+\cdots+r_p=k+1}C_{r_1\cdots r_p}\left(\nabla^{r_1}|F|^2\otimes\cdots\otimes\nabla^{r_p}|F|^2\right),
\end{align*}
where some same symbols have been used to denote different new coefficients. So Lemma \ref{lem-nbla} is proved.
\end{proof}

It is not hard to see that, for any $r\geq 1$ and each $k=1,2,\cdots$, it holds that
\begin{align}
\nabla^r|F|^2=&2\sum_{s=0}^{k-1}C^s_r\lagl\nabla^sF, \nabla^{r-s}F\ragl,\quad\text{for }r=2k-1;\label{nblr|F|2rod}\\
\nabla^r|F|^2=&2\sum_{s=0}^{k-1}C^s_r\lagl\nabla^s F,\nabla^{r-s}F\ragl+C^k_r\lagl\nabla^kF,\nabla^kF\ragl, \quad\text{for }r=2k.
\label{nblr|F|2rev}
\end{align}
Therefore, by Lemma \ref{lem-nbla}, we easily obtain

\begin{lem}\label{lem-nbla1} For any $l\geq 1$, we have
\begin{align}
\nabla^l\left(\eamf\right)=&\eamf\sum_{p=1}^l\sum_{r_1+\cdots+r_p =l}\sum_{0\leq s_1\leq[\fr12r_1]}\!\!\!\cdots\!\!\!\sum_{0\leq s_p\leq [\fr12r_p]}C^{s_1\cdots s_p}_{r_1\cdots r_p}\lagl\nabla^{s_1}F,\nabla^{r_1-s_1}F\ragl\cdots\lagl\nabla^{s_p}F,\nabla^{r_p-s_p}F\ragl\nnm\\
\equiv&\eamf\sum_{p=1}^l\sum_{r_1+\cdots+r_{2p}=l}\nabla^{r_1}F*\cdots*\nabla^{r_{2p}}F\label{nbla1}
\end{align}
where, in the first equality, we have omitted the tensor product sign $\otimes$, and $C^{s_1\cdots s_p}_{r_1\cdots r_p}$ denote certain different constants.
\end{lem}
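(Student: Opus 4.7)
The plan is to derive Lemma \ref{lem-nbla1} as a direct consequence of Lemma \ref{lem-nbla} together with the explicit expansions \eqref{nblr|F|2rod} and \eqref{nblr|F|2rev} of $\nabla^r|F|^2$ in terms of covariant derivatives of $F$. No new analytic input is needed; the content is essentially combinatorial.

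First I would start from Lemma \ref{lem-nbla}, which says
\[
\nabla^l\!\left(e^{\frac{a}{m}|F|^2}\right)=e^{\frac{a}{m}|F|^2}\sum_{p=1}^l\sum_{r_1+\cdots+r_p=l}C_{r_1\cdots r_p}\,\nabla^{r_1}|F|^2\otimes\cdots\otimes\nabla^{r_p}|F|^2 .
\]
For each factor $\nabla^{r_j}|F|^2$ appearing inside the tensor product, I would substitute the appropriate identity: \eqref{nblr|F|2rod} when $r_j$ is odd and \eqref{nblr|F|2rev} when $r_j$ is even. Both identities express $\nabla^{r_j}|F|^2$ as a constant-coefficient linear combination of inner products $\langle\nabla^{s_j}F,\nabla^{r_j-s_j}F\rangle$ with $0\le s_j\le[r_j/2]$. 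Expanding out the tensor products factor by factor and collecting constants produces exactly the first stated equality
\[
e^{\frac{a}{m}|F|^2}\sum_{p=1}^l\sum_{r_1+\cdots+r_p=l}\sum_{0\le s_1\le[r_1/2]}\!\!\cdots\!\!\sum_{0\le s_p\le[r_p/2]}C^{s_1\cdots s_p}_{r_1\cdots r_p}\langle\nabla^{s_1}F,\nabla^{r_1-s_1}F\rangle\cdots\langle\nabla^{s_p}F,\nabla^{r_p-s_p}F\rangle ,
\]
with the new coefficients $C^{s_1\cdots s_p}_{r_1\cdots r_p}$ absorbing the old $C_{r_1\cdots r_p}$ together with the binomial coefficients $C^{s_j}_{r_j}$ that appear in \eqref{nblr|F|2rod} and \eqref{nblr|F|2rev}.

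For the second equality I would simply rewrite each inner product $\langle\nabla^{s_j}F,\nabla^{r_j-s_j}F\rangle$ in the Hamilton--Huisken $*$-notation as $\nabla^{s_j}F*\nabla^{r_j-s_j}F$. This converts each $p$-fold tensor product of inner products into a $*$-product of $2p$ covariant derivatives of $F$ whose orders sum to $s_1+(r_1-s_1)+\cdots+s_p+(r_p-s_p)=r_1+\cdots+r_p=l$. Relabelling the $2p$ orders as $r_1,\dots,r_{2p}$ and noting that $p$ still ranges over $1,\dots,l$ yields exactly the compact form
\[
\nabla^l\!\left(e^{\frac{a}{m}|F|^2}\right)=e^{\frac{a}{m}|F|^2}\sum_{p=1}^l\sum_{r_1+\cdots+r_{2p}=l}\nabla^{r_1}F*\cdots*\nabla^{r_{2p}}F .
\]

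There is no real obstacle here; the only delicate point is the bookkeeping on the index ranges when one substitutes \eqref{nblr|F|2rod}/\eqref{nblr|F|2rev} into the multi-index sum coming from Lemma \ref{lem-nbla}. In particular one must remember that $r_j\ge 1$ (so that each factor $\nabla^{r_j}|F|^2$ is genuinely a derivative, not $|F|^2$ itself), which is why the inner index range in the first displayed formula is precisely $0\le s_j\le[r_j/2]$ and not $0\le s_j\le r_j$. Once this is tracked carefully, the two equalities drop out by a single round of expansion and reindexing.
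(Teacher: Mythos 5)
Your proposal is correct and follows exactly the paper's own (implicit) argument: the paper states the expansions \eqref{nblr|F|2rod} and \eqref{nblr|F|2rev} and then simply asserts that Lemma \ref{lem-nbla1} follows ``by Lemma \ref{lem-nbla}'' via direct substitution, which is precisely the expansion-and-reindexing you carry out. Your added remark about tracking the ranges $r_j\ge 1$ versus $0\le s_j\le[r_j/2]$ is a sensible bit of bookkeeping that the paper leaves unsaid.
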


The following formulae are directly from \cite{l-z}:
\begin{lem}[\cite{l-z}]\label{lem5.5} For any $k\geq 0$,
\begin{align}
\nabla^{l+2}F=&\nabla^l h+\sum_{\iota=0}^{k-1}(*^{2(k-\iota)}_{2\iota+1}h)^iF_*(e_i)+\sum_{\iota=0}^{k-1}(*^{2(k-\iota)+1}_{2\iota}h)^\alpha e_\alpha,\quad\text{if }l=2k;\label{nbla f1}\\
\nabla^{l+2}F=&\nabla^l h+\sum_{\iota=0}^k(*^{2(k-\iota+1)}_{2\iota}h)^iF_*(e_i)+\sum_{\iota=0}^{k-1}(*^{2(k-\iota)+1}_{2\iota+1}h)^\alpha e_\alpha,\quad\text{if }l=2k+1\label{nbla f2}
\end{align}
where, for integers $p\geq 1$ and $q\geq 0$,
$$
*^0_qh=1,\quad*^p_qh=\sum_{r_1+\cdots+r_p=q}\nabla^{r_1}h*\cdots*\nabla^{r_p}h.
$$
\end{lem}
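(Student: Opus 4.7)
\smallskip
\noindent\textit{Proof proposal.} Since this lemma is essentially recalled from \cite{l-z}, the natural approach is induction on $l$, driven entirely by the Gauss and Weingarten formulae for the immersion $F:M^m\to\bbr^{m+p}$. The base case $l=0$ is the Gauss identity $\nabla^2F=h$, for which both sums on the right-hand side are empty (their range collapses to $\iota=0,\dots,-1$) and contribute nothing. The case $l=1$ follows from one further application of $\bar\nabla$ to $h=h^\alpha_{ij}e_\alpha$: the Weingarten formula $\bar\nabla_{e_k}e_\alpha=-h^\alpha_{kl}g^{lm}F_*(e_m)+\omega^\beta_\alpha(e_k)e_\beta$ produces a tangential correction of schematic form $h*h$, i.e. an instance of $*^2_0h$, which is precisely the $\iota=0$ term on the tangential side of the $l=2k+1$, $k=0$ line.

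\smallskip
For the inductive step I would apply $\nabla$ once more to the formula at level $l$ and sort the output into tangential and normal pieces. Three sources of terms arise. First, $\nabla(\nabla^lh)$ gives the leading $\nabla^{l+1}h$ plus, via Weingarten applied to its normal factor, a tangential correction of schematic type $h*\nabla^lh$, namely $*^2_lh$. Second, differentiating a tangential summand $(*^p_qh)^iF_*(e_i)$ produces, via Leibniz and Gauss, a tangential piece whose coefficient lies in $*^p_{q+1}h$ together with a new normal piece in $*^{p+1}_qh$. Third, differentiating a normal summand $(*^p_qh)^\alpha e_\alpha$ produces, symmetrically via Leibniz and Weingarten, a normal piece in $*^p_{q+1}h$ plus a tangential piece in $*^{p+1}_qh$. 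The accounting rule is simple: a $\nabla$ falling onto an existing $h$-factor raises $q$ by one, whereas each Gauss/Weingarten swap between tangential and normal raises $p$ by one while preserving $q$.

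\smallskip
Collecting the three contribution types and relabelling, one then checks that the induced shifts in $(p,q)$ together with the tangential-normal exchanges correspond exactly to the right-hand side at level $l+1$, noting that $k=\lfloor l/2\rfloor$ and the parity of $l$ update accordingly. The genuine obstacle is purely combinatorial book-keeping: one must verify that the upper summation limits ($\iota=0,\dots,k-1$ in some blocks versus $\iota=0,\dots,k$ in others) and the parities $2(k-\iota)$ or $2(k-\iota)+1$ in the subscripts and superscripts of $*^p_qh$ match exactly after merging the three groups of terms. Treating the even case $l=2k$ and the odd case $l=2k+1$ separately, and tracking whether the term just produced came from a $\nabla$-fall or from a Gauss/Weingarten swap, reduces the matching to a routine index check and closes the induction.
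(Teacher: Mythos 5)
The paper does not actually reproduce a proof of this lemma --- it is quoted verbatim from \cite{l-z} --- so there is no ``paper's own proof'' to compare against. Your proposed argument is nonetheless the natural and correct one, and it is the argument one expects to find in \cite{l-z}: induction on $l$, with $\nabla^2 F=h$ as base case, then one application of $\bar\nabla$ at each step, sorting outputs into tangential and normal pieces via the Gauss and Weingarten formulae. I verified the $(p,q)$ bookkeeping rule you state (a $\nabla$ landing on an $h$-factor sends $q\mapsto q+1$; a Gauss/Weingarten exchange between tangent and normal sends $p\mapsto p+1$ with $q$ fixed), and the resulting parity pattern ($p$ even, $q$ odd for the tangential sum when $l$ is even; etc.) and the upper limits ($\iota\le k-1$ vs.\ $\iota\le k$) do close under the inductive step in both the $2k\to 2k+1$ and $2k+1\to 2(k+1)$ transitions; in particular the extra $\iota=k$ tangential term in the odd case is exactly the Weingarten correction $*^2_lh$ coming from differentiating $\nabla^l h$. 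The proposal is correct.
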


To proceed, we need the following identities which are derived in \cite{a-b} (see also \cite{b}):
\begin{align}
R(\partial_t,e_i,e_j,e_k)=&\lagl\nabla^\bot_{e_k}(F_t)^\bot,h_{ij}\ragl-\lagl\nabla^\bot_{e_j}(F_t)^\bot,h_{ik}\ragl,\label{rtijk1}\\
R^\bot(\partial_t,e_i,e\alpha,e_\beta)=&\lagl\nabla^\bot_{A_\alpha(e_i)}(F_t)^\bot,e_\beta\ragl-\lagl\nabla^\bot_{A_\beta(e_i)}
(F_t)^\bot,e_\alpha\ragl\label{rtiab1}.
\end{align}

Moreover, by the flow equation \eqref{flow'}, the fact that $\nabla^\bot F^\bot=F^\top*h=\nabla|F|^2*h$, we also obtain
\begin{align}
R(\partial_t,e_i,e_j,e_k)
=&\eamf(\nabla|F|^2*(h^2+bF^\bot*h)+h*\nabla h),\label{rtijk2}\\
R^\bot(\partial_t,e_i,e_\alpha,e_\beta)
=&\eamf(\nabla|F|^2*(h^2+bF^\bot*h)+h*\nabla h).\label{rtiab2}
\end{align}

Now we can prove the following proposition:
\begin{prop}
The evolution of the $l$-th covariant derivative of $h$ is of the form
\begin{align}
\nabla_t\nabla^lh=&c\eamf\Delta\nabla^lh+\eamf\Big(\nabla|F|^2*\nabla^{l+1}h+\left(\nabla|F|^2\right)^2*\nabla^lh+F^\bot*\nabla^lh*h\nnm\\
&+g*\nabla^lh+h^2*\nabla^lh+P_1(g,F,\nabla F,h,\nabla h,\cdots,\nabla^{l-1}h)\Big)\nnm\\
&+b\eamf\left(F^\bot*\nabla^lh+P_2(g,F,\nabla F,h,\nabla h,\cdots,\nabla^{l-1}h)\right)F^\bot,\label{f1}
\end{align}
where $P_1$ and $P_2$ are polynomials of $g,F,\nabla F,h,\nabla h,\cdots,\nabla^{l-1}h$ of degrees dependent only on $l$.
\end{prop}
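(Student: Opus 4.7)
The plan is to prove this by induction on $l$, with the base case $l=0$ being precisely equation \eqref{ht2} derived in Section 3: there $P_1$ is vacuous (the sequence $h,\nabla h,\ldots,\nabla^{-1}h$ being empty) and $P_2(g,F,\nabla F)=g+(\nabla|F|^2)^2$, both manifestly polynomials in the allowed arguments since $\nabla|F|^2$ is linear in $F$ and $\nabla F$. This same shape check---that no $\nabla^{l+1}h$ or higher leaks into the remainders---must be repeated at each inductive step.

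For the step from $l$ to $l+1$, start with the identity
\[
\nabla_t\nabla^{l+1}h=\nabla(\nabla_t\nabla^l h)+[\nabla_t,\nabla]\nabla^l h.
\]
The commutator $[\nabla_t,\nabla]=-R(\partial_t,\cdot)$ acts on the covariant indices of $\nabla^l h$ separately; by \eqref{rtijk2}--\eqref{rtiab2} together with the identity $\nabla^\bot F^\bot=-h(F^\top,\cdot)$, each such commutator contribution is schematically
\[
\eamf\bigl(\nabla|F|^2*(h^2+F^\bot*h)+h*\nabla h\bigr)*\nabla^l h,
\]
a polynomial in $g,F,\nabla F,h,\nabla h,\ldots,\nabla^l h$ which therefore fits into the new $P_1'$ (which is permitted to contain $\nabla^l h$ at step $l+1$).

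The main calculation is to apply $\nabla$ to the inductive hypothesis term by term via Leibniz, using $\nabla(\eamf)=\frac{a}{m}\eamf\cdot\nabla|F|^2$. Two identities carry the bulk of the work. First, the Ricci commutation $\nabla\Delta-\Delta\nabla$ applied to the $\mathcal{H}$-valued tensor $\nabla^l h$ produces curvature contractions which, by the Gauss and Ricci equations on $(M^m,g_t)$, reduce to expressions of the form $h*h*\nabla^{l+1}h+h*\nabla h*\nabla^l h+\cdots$; the $h^2*\nabla^{l+1}h$ piece is exactly one of the six declared principal templates, and the rest lands in $P_1'$. Second, $\nabla^2|F|^2=2g+2\langle F^\bot,h\rangle$, so whenever a derivative falls on an existing $\nabla|F|^2$ factor it produces a $g$ or $F^\bot*h$ factor of acceptable shape; similarly $\nabla^\bot F^\bot=-h(F^\top,\cdot)=\nabla|F|^2*h$, so differentiating an inner $F^\bot$ factor yields $\nabla|F|^2*h$, again of an allowed type. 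The outer $F^\bot$ factor on the $b$-line of the claim is preserved, with its lower-order derivatives accumulating inside $P_2'$.

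The main obstacle is not conceptual but combinatorial: one must systematically verify that every term produced by the expansion is either an exact match with one of the six principal templates (with $l$ replaced by $l+1$) or a polynomial in $g,F,\nabla F,h,\nabla h,\ldots,\nabla^l h$ suitable for $P_1'$ or $P_2'$. This is automatic as long as each applied $\nabla$ falls either on one of the low-order factors ($\eamf$, $\nabla|F|^2$, $F^\bot$, $g$, or $h$), on the factor $\nabla^l h$ already sitting inside a principal term (raising it to $\nabla^{l+1}h$), or on the differential operator $\Delta$ (handled via the Ricci commutation above); the schematic $*$-notation then absorbs all $g$-contractions and normalizations without further issue.
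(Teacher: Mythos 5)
Your proposal is correct and matches the paper's proof in both structure and substance: induction on $l$ with base case \eqref{ht2}, the time-like Ricci identity to handle $[\nabla_t,\nabla]\nabla^l h$ via \eqref{rtijk2}--\eqref{rtiab2}, Leibniz expansion with $\nabla(\eamf)=\tfrac{a}{m}\eamf\,\nabla|F|^2$, the Laplacian--gradient commutation formula, and the auxiliary identities for $\nabla^2|F|^2$ and $\nabla^\bot F^\bot$ (your signs and factors of $2$ differ superficially from the paper's but are absorbed by the $*$-convention). The paper simply carries out the term-by-term expansion you describe as ``combinatorial'' explicitly in its display \eqref{tk+1h}, but no additional idea is used.
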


\begin{proof} We shall first prove \eqref{f1} by induction. When $l=0$, we directly use \eqref{ht2} to get
\begin{align}
\nabla_th^\alpha_{ij}=&c\eamf\Delta h^\alpha_{ij}+\eamf\big((\nabla|F|^2)^2*h+\nabla|F|^2*\nabla h+F^\bot*h^2+g*h +h^3\big)^\alpha_{ij}\nnm\\
&+b\eamf\left((\nabla|F|^2)^2+F^\bot*h+g\right)_{ij}F^\alpha,
\end{align}
clearly implying that \eqref{f1} holds for $l=0$.

Suppose that \eqref{f1} holds for $l=k$, i.e.,
\begin{align*}
\nabla_t\nabla^kh=&c\eamf\Delta\nabla^kh+\eamf\Big(\nabla|F|^2*\nabla^{k+1}h+\left(\nabla|F|^2\right)^2*\nabla^kh+F^\bot*\nabla^kh*h\\
&+g*\nabla^kh+h^2*\nabla^kh+P_1(g,F,\nabla F,h,\nabla h,\cdots,\nabla^{k-1}h)\Big)\\
&+b\eamf\left(F^\bot*\nabla^kh+P_2(g,F,\nabla F,h,\nabla h,\cdots,\nabla^{k-1}h)\right)F^\bot
\end{align*}
Then for $l=k+1$ we have, by the time-like Ricci identity, \eqref{rtijk2} and \eqref{rtiab2}, that
\begin{align}\label{tk+1h}
\nabla_t\nabla^{k+1}h=&\nabla(\nabla_t\nabla^kh)+\eamf\left(\nabla^kh*(\nabla|F|^2*(h^2+bF^\bot*h)+h*\nabla h)\right)\nnm\\
=&c\nabla\left(\eamf\Delta\nabla^kh\right)+\nabla\left(\eamf\right)\Big(\nabla|F|^2*\nabla^{k+1}h +\left(\nabla|F|^2\right)^2*\nabla^kh+F^\bot*\nabla^kh*h\nnm\\
&+g*\nabla^kh+h^2*\nabla^kh+P_1(g,F,\nabla F,h,\nabla h,\cdots,\nabla^{k-1}h)\Big)\nnm\\
&+\eamf\nabla\Big(\nabla|F|^2*\nabla^{k+1}h+\left(\nabla|F|^2\right)^2*\nabla^kh+F^\bot*\nabla^kh*h\nnm\\
&+g*\nabla^kh+h^2*\nabla^kh+P_1(g,F,\nabla F,h,\nabla h,\cdots,\nabla^{k-1}h)\Big)\nnm\\
&+b\nabla\left(\eamf\right)\left(F^\bot*\nabla^kh+P_2(g,F,\nabla F,h,\nabla h,\cdots,\nabla^{k-1}h)\right)F^\bot\nnm\\
&+b\eamf\nabla\left(F^\bot*\nabla^kh+P_2(g,F,\nabla F,h,\nabla h,\cdots,\nabla^{k-1}h)\right)F^\bot\nnm\\
&+b\eamf\left(F^\bot*\nabla^kh+P_2(g,F,\nabla F,h,\nabla h,\cdots,\nabla^{k-1}h)\right)\nabla^\bot F^\bot\nnm\\
&+\eamf\left(\nabla^kh*(\nabla|F|^2*(h^2+bF^\bot*h)+h*\nabla h)\right)\nnm\\
=&\eamf\nabla|F|^2*(\Delta\nabla^kh)+c\eamf\nabla(\Delta\nabla^kh)\nnm\\
&+\eamf\nabla|F|^2*\Big(\nabla|F|^2*\nabla^{k+1}h+\left(\nabla|F|^2\right)^2*\nabla^kh+F^\bot*\nabla^kh*h\nnm\\
&+g*\nabla^kh+h^2*\nabla^kh+P_1(g,F,\nabla F,h,\nabla h,\cdots,\nabla^{k-1}h)\Big)\nnm\\
&+\eamf\Big(\nabla^2|F|^2*\nabla^{k+1}h+\nabla|F|^2*\nabla^{k+2}h+\nabla|F|^2*\nabla^2|F|^2*\nabla^kh
+\left(\nabla|F|^2\right)^2*\nabla^{k+1}h\nnm\\
&+\nabla^\bot F^\bot*\nabla^kh*h+F^\bot*\nabla^{k+1}h*h+F^\bot*\nabla^kh*\nabla h\nnm\\
&+g*\nabla^{k+1}h+h*\nabla h*\nabla^kh+h^2*\nabla^{k+1}h+P_1(g,F,\nabla F,h,\nabla h,\cdots,\nabla^kh)\Big)\nnm\\
&+b\eamf\nabla|F|^2*\left(F^\bot*\nabla^kh+P_2(g,F,\nabla F,h,\nabla h,\cdots,\nabla^{k-1}h)\right)F^\bot\nnm\\
&+b\eamf\left(\nabla^\bot F^\bot*\nabla^kh+F^\bot*\nabla^{k+1}h+P_2(g,F,\nabla F,h,\nabla h,\cdots,\nabla^kh)\right)F^\bot\nnm\\
&+b\eamf\left(F^\bot*\nabla^kh+P_2(g,F,\nabla F,h,\nabla h,\cdots,\nabla^{k-1}h)\right)\nabla^\bot F^\bot\nnm\\
&+\eamf\left(\nabla^kh*(\nabla|F|^2*(h^2+bF^\bot*h)+h*\nabla h)\right).
\end{align}

On the other hand, for any $S\in\Gamma(\otimes^r{\mathcal H^*}\otimes{\mathcal N})$, we have the following formula of commuting the Laplacian and gradient (\cite{l-z}):
\begin{align*}
\nabla_k(\Delta S)^\alpha_{i_1\cdots i_r}
=\Delta(\nabla_k S)^\alpha_{i_1\cdots i_r}+(\nabla S*h^2+S*h*\nabla h )^\alpha_{i_1\cdots i_rk}.
\end{align*}
In particular, by setting $S=\nabla^kh$, we have
$$\nabla_{i_{k+1}}(\Delta\nabla^kh)^\alpha_{iji_1\cdots i_k}
=\Delta(\nabla^{k+1}h)^\alpha_{iji_1\cdots i_{k+1}}+(\nabla^{k+1}h*h^2+\nabla^kh*h*\nabla h)^\alpha_{iji_1\cdots i_{k+1}}.
$$
This together with \eqref{tk+1h} and the following equalities
\begin{align*}
&F^\top=\sum g^{ij}\lagl F,F_i\ragl F_j=\fr12\sum g^{ij}\nabla_i|F|^2F_j,\\
&F^\top*h=\nabla|F|^2*h,\quad \nabla^2F=h,\quad\nabla^2|F|^2=g+\lagl F,h\ragl,\\
&\nabla^\bot F^\bot=\left(\nabla(F-F^\top)\right)^\bot=F^\top*h=\nabla|F|^2*h,
\end{align*}
directly gives
\begin{align*}
\nabla_t\nabla^{k+1}h=&c\eamf\Delta\nabla^{k+1}h+\eamf\Big(\nabla|F|^2*\nabla^{k+2}h +\left(\nabla|F|^2\right)^2*\nabla^{k+1}h+F^\bot*\nabla^{k+1}h*h\\
&+g*\nabla^{k+1}h+h^2*\nabla^{k+1}h+P_1(g,F,\nabla F,h,\nabla h,\cdots,\nabla^kh)\Big)\\
&+b\eamf\left(F^\bot*\nabla^{k+1}h+P_2(g,F,\nabla F,h,\nabla h,\cdots,\nabla^kh)\right)F^\bot.
\end{align*}
Thus \eqref{f1} also holds for $l=k+1$.
\end{proof}

Since $\Delta|\nabla^lh|^2=2\lagl\nabla^lh,\Delta\nabla^lh\ragl+2|\nabla^{l+1}h|^2$,
we easily obtain the following corollary by \eqref{f1}:
\begin{cor}\label{evonhiderh}
Let $l\geq 1$. Then the following evolution formula of $|\nabla^lh|^2$ holds:
\begin{align}
\pp{}{t}|\nabla^lh|^2=&c\eamf\Delta|\nabla^lh|^2-2c\eamf|\nabla^{l+1}h|^2+\eamf\Big(\nabla|F|^2*\nabla^{l+1}h*\nabla^lh
+\left(\nabla|F|^2\right)^2*(\nabla^lh)^2\nnm\\
&+F^\bot*(\nabla^lh)^2*h+g*(\nabla^lh)^2+h^2*(\nabla^lh)^2+P_1(g,F,\nabla F,h,\nabla h,\cdots,\nabla^{l-1}h)*\nabla^lh\Big)\nnm\\
&+b\eamf\left(F^\bot*\nabla^lh+P_2(g,F,\nabla F,h,\nabla h,\cdots,\nabla^{l-1}h)\right)F^\bot*\nabla^lh\nnm\\
\equiv&c\eamf\Delta|\nabla^lh|^2-2c\eamf|\nabla^{l+1}h|^2\nnm\\
&+\eamf\left(C_1*\nabla^{l+1}h*\nabla^lh+C_2*(\nabla^lh)^2+C_3*\nabla^lh\right),\label{tlh2}
\end{align}
where $C_1,C_2$ and $,C_3$ are polynomials of $g,F,\nabla F$ and $\nabla^rh$ with $0\leq r\leq l-1$.
\end{cor}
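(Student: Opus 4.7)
The plan is to derive \eqref{tlh2} as a straightforward consequence of the evolution formula \eqref{f1} for $\nabla^l h$ combined with two bookkeeping identities: the Weitzenb\"ock-type identity
$$\Delta|\nabla^lh|^2=2\lagl\nabla^lh,\Delta\nabla^lh\ragl+2|\nabla^{l+1}h|^2,$$
which is purely an instance of the Leibniz rule for $\Delta$ acting on a tensor inner product and holds on any Riemannian manifold, and the evolution of the inverse metric \eqref{evogij1}, which we need because the pointwise norm $|\nabla^l h|^2$ is assembled from $\nabla^l h$ by contracting its indices against copies of $g^{ij}$. So a priori
$$\pp{}{t}|\nabla^lh|^2\;=\;2\lagl\nabla^lh,\nabla_t\nabla^lh\ragl\;+\;\big(\text{terms from }\pp{}{t}g^{ij}\text{ and }\nabla^\bot_te_\alpha\big),$$
and I will process the two summands separately.

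For the first summand, I would pair the right-hand side of \eqref{f1} with $\nabla^l h$. The top-order piece $c\eamf\Delta\nabla^lh$ contributes $2c\eamf\lagl\nabla^lh,\Delta\nabla^lh\ragl$, which by the Weitzenb\"ock identity equals exactly $c\eamf\Delta|\nabla^lh|^2-2c\eamf|\nabla^{l+1}h|^2$, producing the two leading terms on the right-hand side of \eqref{tlh2}. Each of the remaining terms on the right of \eqref{f1} is an $\eamf$ (or $b\eamf$) multiple of an expression of the form (schematic factor) $*\nabla^lh$ or $*\nabla^{l+1}h$ or $P_j\cdot F^\bot$. Pairing with $\nabla^l h$ produces precisely the three types listed in the compact form: a $\nabla|F|^2*\nabla^{l+1}h*\nabla^lh$ term (absorbed into $C_1*\nabla^{l+1}h*\nabla^lh$ since $\nabla|F|^2$ is a polynomial in $F,\nabla F$); quadratic-in-$\nabla^lh$ terms with coefficients in $g,F,\nabla F,h,\ldots,\nabla^{l-1}h$ (absorbed into $C_2*(\nabla^lh)^2$); and linear-in-$\nabla^lh$ terms arising from the $P_1$ and $P_2$ pieces (absorbed into $C_3*\nabla^lh$).

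For the second summand, I would use \eqref{evogij1}, which expresses $\pp{}{t}g^{ij}$ as $\eamf$ times a $g$-constant-linear combination of $\nabla|F|^2\otimes\nabla|F|^2$, $H*h$ and $g$; similarly $\nabla^\bot_t e_\alpha$ from \eqref{ealpt} contributes quantities built from $\nabla|F|^2$, $H$ and $\nabla H$. Each appearance of $\pp{}{t}g^{ij}$ (or $\nabla^\bot_t e_\alpha$) in the differentiation of the contracted expression $|\nabla^lh|^2$ yields a quadratic-in-$\nabla^lh$ term whose prefactor is a polynomial in $g,F,\nabla F$ and at most $h$ (so in particular in $\nabla^r h$ with $0\le r\le l-1$ once $l\geq 1$). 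These all go into the $C_2*(\nabla^lh)^2$ bucket.

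The only potential subtlety\,--\,and therefore the place where I will be careful\,--\,is to make sure that no derivative of $h$ higher than $\nabla^{l+1}h$ creeps in through the Weitzenb\"ock identity or through the differentiation of contractions, and that the coefficients of the top-order factors $\nabla^{l+1}h$, $(\nabla^lh)^2$, $\nabla^lh$ really have the claimed dependence only on $g,F,\nabla F,\nabla^r h$ with $r\leq l-1$. Both checks reduce to inspection of \eqref{f1} and \eqref{evogij1}. Once confirmed, the compact form in \eqref{tlh2} follows immediately by renaming the accumulated coefficients $C_1,C_2,C_3$, and the corollary is proved.
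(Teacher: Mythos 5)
Your approach is essentially the paper's: pair the evolution formula \eqref{f1} for $\nabla^lh$ against $\nabla^lh$ and rewrite the leading Laplacian term via $\Delta|\nabla^lh|^2=2\lagl\nabla^lh,\Delta\nabla^lh\ragl+2|\nabla^{l+1}h|^2$. One correction is worth making, though: the ``second summand'' you introduce is actually identically zero, because the time-connection $\nabla_t$ is metric-compatible (the paper asserts this in Section 3 following Andrews and Baker, and one verifies directly from \eqref{gamat} and \eqref{evogij} that $\nabla_t g_{ij}=\pp{}{t}g_{ij}-\Gamma^k_{it}g_{kj}-\Gamma^k_{jt}g_{ik}=0$, and similarly for the normal-bundle metric), so $\pp{}{t}|\nabla^lh|^2=2\lagl\nabla^lh,\nabla_t\nabla^lh\ragl$ holds exactly; adding separate $\pp{}{t}g^{ij}$ and $\nabla^\bot_te_\alpha$ contributions double-counts what is already carried by the covariant derivative $\nabla_t$ in \eqref{f1}. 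This happens to be harmless here only because the spurious terms would land in the $C_2*(\nabla^lh)^2$ bucket, but the cleaner statement is that they are not present at all.
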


Next, we shall consider the flow \eqref{flow} and make some estimations for all the higher order derivatives of the second fundamental form $h$ and the $\bbr^{m+p}$-valued function $F$.

\begin{prop}\label{prop5.7}
Suppose that the curvature flow \eqref{flow} has a solution on a time interval $[0,\tau]$ where $F_t$ is an immersion for each $t\in[0,\tau]$ with $|F_0|^2\neq m$ everywhere on $M^m$. If $|h|^2$ and $F$ are bounded on $[0,\tau]$, say, $\max\{|h|^2,|F|^2\}\leq C^0_0$ with $C^0_0$ independent of $\tau$, then for each $l\geq 1$, it holds that $|\nabla^lh|^2\leq C^0_l(1+1/t^l)$ for all $t\in(0,\tau]$, where $C^0_l$ is a constant that only depends on $m,l$ and $C^0_0$.
\end{prop}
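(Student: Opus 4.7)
The plan is to argue by induction on $l \geq 1$, using the evolution formula \eqref{tlh2} from Corollary \ref{evonhiderh} combined with the parabolic maximum principle applied to suitable auxiliary functions of Bernstein--Bando--Shi type. Under the standing hypotheses $|F|^2, |h|^2 \leq C^0_0$, the Gaussian weight $e^{|F|^2/m}$ is uniformly bounded above and below, and \eqref{evogij} together with Gr\"onwall's inequality yields uniform bounds on the induced metric $g$, its inverse $g^{-1}$, and on $\nabla F$ over $[0,\tau]$. Consequently the polynomial coefficients $C_1,C_2,C_3$ in \eqref{tlh2} at the base level $l=1$ depend only on $m$ and $C^0_0$.

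For the base case $l=1$, I would consider the auxiliary function $G_1 := t|\nabla h|^2 + K|h|^2$ for a constant $K>0$ to be determined. Differentiating in $t$ and substituting \eqref{th2} together with \eqref{tlh2} at $l=1$, then applying Young's inequality (Lemma \ref{young}) to the cross term $C_1 * \nabla^2 h * \nabla h$ with a small parameter so that the resulting multiple of $|\nabla^2 h|^2$ is absorbed into the good term $-2c\, e^{|F|^2/m}|\nabla^2 h|^2$, one arrives at a schematic inequality
$$\partial_t G_1 \leq c\, e^{|F|^2/m}\Delta G_1 + A\bigl(1+|\nabla h|^2\bigr) - 2Kc\, e^{|F|^2/m}|\nabla h|^2,$$
with $A = A(m,C^0_0)$. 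Choosing $K$ large enough that $2Kc \geq 2A$ (permissible since $e^{|F|^2/m}\geq 1$) absorbs the $|\nabla h|^2$ term, leaving $\partial_t G_1 \leq c\, e^{|F|^2/m}\Delta G_1 + A'$. The parabolic maximum principle then gives $G_1 \leq G_1|_{t=0} + A'\tau$ on $M^m\times[0,\tau]$, whence $|\nabla h|^2 \leq C^0_1/t \leq C^0_1(1+1/t)$.

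For the inductive step, suppose $|\nabla^j h|^2 \leq C^0_j(1+1/t^j)$ for $j=0,1,\ldots,l-1$, and set $G_l := t^l|\nabla^l h|^2 + K\, t^{l-1}|\nabla^{l-1} h|^2$. Using \eqref{tlh2} at levels $l$ and $l-1$, applying Young's inequality to absorb the cross term $C_1 * \nabla^{l+1} h * \nabla^l h$ into the good contribution $-2c\, e^{|F|^2/m}|\nabla^{l+1} h|^2$, and invoking the inductive hypothesis to bound every factor $\nabla^r h$ (with $r\leq l-1$) appearing inside $C_1,C_2,C_3$, one obtains an inequality whose right-hand side is $c\, e^{|F|^2/m}\Delta G_l$ plus terms in which the weights $t^l$ and $t^{l-1}$ exactly cancel the $1/t^r$ singularities coming from the inductive bounds. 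Enlarging $K$ so that $-2Kc\, e^{|F|^2/m} t^{l-1}|\nabla^l h|^2$ dominates both the resulting $B\, t^l|\nabla^l h|^2$ and the term $l\, t^{l-1}|\nabla^l h|^2$ arising from $\partial_t(t^l)$, the maximum principle yields $G_l \leq C^0_l$ on $M^m\times[0,\tau]$, hence $|\nabla^l h|^2 \leq C^0_l/t^l \leq C^0_l(1+1/t^l)$.

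The principal obstacle is the bookkeeping in the inductive step: the polynomials $P_1,P_2$ hidden in $C_1,C_2,C_3$ (see \eqref{f1}) contain arbitrary products of lower-order derivatives $\nabla^r h$ for $r\leq l-1$, each of which carries a singular factor $1/t^{r/2}$ near $t=0$ under the inductive hypothesis. One must verify that after multiplication by the prefactor $t^l$ and a suitable application of Peter--Paul (Lemma \ref{young}), every such term is dominated either by the good Laplacian-type term $-2c\, e^{|F|^2/m} t^l|\nabla^{l+1} h|^2$ or by a bounded multiple of $G_l+1$. This is precisely why the weights are chosen to be $t^l$ and $t^{l-1}$ (no other powers work cleanly), and why the constant $K$ must be enlarged at each inductive stage.
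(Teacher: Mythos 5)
Your proposal is correct and follows essentially the same Bernstein--Bando--Shi strategy as the paper: an auxiliary function of the form $G_l = t^l|\nabla^l h|^2 + (\text{coefficient})\cdot t^{l-1}|\nabla^{l-1}h|^2$, the evolution inequality \eqref{tlh2}, Young's inequality to absorb the $\nabla^{l+1}h$ cross term into the good $-2c\,\emf|\nabla^{l+1}h|^2$ term, and the parabolic maximum principle. There is one genuine (though minor) structural difference worth noting. The paper fixes the lower-order coefficient to be $l$, so that $G_l = t^l|\nabla^l h|^2 + l\,t^{l-1}|\nabla^{l-1}h|^2$; after Young's inequality the sign of the resulting $-(1-\veps-tc)|\nabla^l h|^2$ term is only controlled for $t \leq (1-\veps)/c$, and the paper then covers $(0,\tau]$ by finitely many intervals of this fixed length (using the a priori finiteness $\tau<T_0$ from Corollary~\ref{cor4.9}). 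You instead leave the coefficient as a free parameter $K$ and choose it large once and for all, so that $-2Kc\,\emf\,t^{l-1}|\nabla^l h|^2$ dominates both the term $l\,t^{l-1}|\nabla^l h|^2$ from differentiating $t^l$ and the $t^l$-weighted bad term, valid on the whole interval $[0,\tau]$. This gives a cleaner one-shot maximum-principle argument with no covering step; the price is that the admissible $K$, and hence $C^0_l$, depends on the upper bound $T_0$ for $\tau$ — but the paper's covering argument has the same implicit dependence, so nothing is lost. One small caution in your schematic inequality for the base case: the quantity you label $A$ actually picks up a contribution linear in $K$ from the $K\partial_t|h|^2$ terms, so the sentence ``choose $K$ so that $2Kc\geq 2A$'' reads as circular; the fix is to first fix the Young parameter $\veps<1$ so the $K$-weighted cross term only costs $K\veps|\nabla h|^2$ (strictly smaller than the gain $2Kc\,\emf|\nabla h|^2$), after which the remaining $K$-independent bad terms are absorbed by taking $K$ large. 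Finally, you are right that the real content is hidden in the bookkeeping for $C_1,C_2,C_3$ in \eqref{tlh2}: under the inductive hypothesis each $\nabla^r h$ carries a $t^{-r/2}$ singularity, and one must check that the prefactors $t^l,t^{l-1}$ actually cancel these. The paper's assertion that ``$|h|^2,\dots,|\nabla^{l-1}h|^2$ are all bounded from above'' elides this point (it is only true away from $t=0$), so your explicit flagging of this as the principal obstacle is accurate, and your proposal is at the same level of rigor as the published proof on this point.
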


\begin{proof} The idea of proving Proposition \ref{prop5.7} originally comes from \cite{a-b}, and some detailed argument can also be found in \cite{l-z}.

For each $l\geq 1$ define
$$G_l=t^l|\nabla^l h|^2+lt^{l-1}|\nabla^{l-1}h|^2.$$

We shall prove the proposition by induction on $l$. For the case $l=1$, we can use \eqref{th2} and \eqref{tlh2} to find that
\begin{align}
\pp{}{t}G_1=&\emf\Delta G_1+|\nabla h|^2+t\emf\left(-2|\nabla^2 h|^2+C_1*\nabla^2h*\nabla h+C_2*(\nabla h)^2+C_3*\nabla h\right)\nnm\\
&+\emf\left(-2|\nabla h|^2+C'_1*\nabla h+C'_2\right)\label{gl1}
\end{align}
where, as indicated in Corollary \ref{evonhiderh},  $C_1,C_2,C_3$ and $C'_1,C'_2$ are polynomials of $g,F,\nabla F$ and $h$.

Note that $|\nabla F|^2=m$ and $t\leq\tau<T_0<+\infty$ with some finite positive number $T_0$ (see Corollary \ref{cor4.9}). It follows by the assumption of the proposition that all the norms of $C_1,C_2,C_3,C'_1,C'_2$ are dependent only on $m$ and $C^0_0$. This with the Young's inequality gives that
\begin{align*}
|C_1*&\nabla^2h*\nabla h|\leq 2|\nabla^2 h|^2+c'_1|\nabla h|^2,\quad |C_2*(\nabla h)^2|=|C_2*\nabla h*\nabla h|\leq c'_2|\nabla h|^2,\\
&\quad |C_3*\nabla h|\leq c'_3|\nabla h|^2+c'_4,\quad |C'_1*\nabla h|\leq\veps|\nabla h|^2+c'_5,
\end{align*}
where $c'_1,\cdots,c'_5$ and $\veps$ are positive constants with $\veps<1$. Since $\emf\geq 1$, we obtain from \eqref{gl1} the following estimate:
\be\label{pt g1}
\pp{}{t}G_1\leq\emf\Delta G_1-(1-\veps-tc_1)|\nabla h|^2+c_2 \leq \emf\Delta G_1+c_2,\quad\text{for any\ \ } t\leq\fr{1-\veps}{c_1},
\ee
where $c_1$ and $c_2$ are constants that only depend on $m$ and $C^0_0$. Then it follows from the maximum principle that
$$t|\nabla h|^2\leq G_1\leq C^0_0+c_2t$$
or $$|\nabla h|^2\leq\fr1t G_1\leq\fr{C^0_0}{t}+c_2\leq C^0_1(1+\fr1t)$$
on the interval $(0,\fr{1-\veps}{c_1}]$. When $t>\fr{1-\veps}{c_1}$, we can consider the interval $(t-\fr{1-\veps}{2c_1},t+\fr{1-\veps}{2c_1}]$ of length $\fr{1-\veps}{c_1}$ on which similar argument can give a similar estimation for $G_1$. Since $\veps$ can be chosen fixed, we can cover $(\fr{1-\veps}{c_1}-\delta,\tau]$, $0<\delta\leq\fr{1-\veps}{2c_1}$, with a family of such intervals of a fixed length. Due to the finiteness of $T_0$ and the fact that $\tau<T_0$, this consideration will directly lead to the conclusion for $l=1$.

Now we suppose the conclusion is true for less than or equal to $l-1\geq 1$. Then $|h|^2,\cdots |\nabla^{l-1}h|^2$ are all bounded from above. Once more we use Corollary \ref{evonhiderh} to find
\begin{align}\label{gl1'}
\pp{}{t}G_l=&\emf\Delta G_l+lt^{l-1}|\nabla^l h|^2\nnm\\
&+t^l\emf\big(-2|\nabla^{l+1} h|^2+C_4*\nabla^{l+1}h*\nabla^lh+C_5*(\nabla^lh)^2+C_6*\nabla^lh\big)\nnm\\
&+l(l-1)t^{l-2}|\nabla^{l-1}h|^2\nnm\\
&+lt^{l-1}\emf\big(-2|\nabla^l h|^2+C'_4*\nabla^lh*\nabla^{l-1}h+C'_5*(\nabla^{l-1}h)^2+C'_6*\nabla^{l-1}h\big),
\end{align}
where $C_4,C_5,C_6$ and $C'_4,C'_5,C'_6$ are polynomials of $g,F,\nabla F$ and $\nabla^r h$ with $0\leq r\leq l-1$.

As in the case $l=1$, the above argument and the Young's inequality together give that
\begin{align*}
|C_4*&\nabla^{l+1}h*\nabla^l h|\leq 2|\nabla^{l+1} h|^2+c'_6|\nabla^l h|^2,\quad |C_5*(\nabla^l h)^2|=|C_5*\nabla^l h*\nabla^l h|\leq c'_7|\nabla^l h|^2,\\
&\quad |C_6*\nabla^l h|\leq c'_8|\nabla^l h|^2+c'_9,\quad |C'_4*\nabla^l h*\nabla^{l-1} h|\leq\veps|\nabla^l h|^2+c'_{10},
\end{align*}
where $c'_6,\cdots,c'_{10}$ and $\veps$ are positive constants with $\veps<1$. Since $\emf\geq 1$, we obtain from \eqref{gl1'} the following estimate:
\be
\pp{}{t}G_l\leq\emf\Delta G_l-lt^{l-1}\big(1-\veps-tc_3\big)|\nabla^l h|^2+c_4
\leq\emf\Delta G_l+c_4,\quad t\leq\fr{1-\veps}{c_3},
\ee
for some positive $\veps<1$, where $c_3$ and $c_4$ are constants that only depend on $m$ and $C^0_0$. Thus the maximum principle gives
$$t^l|\nabla^l h|^2\leq G_l\leq c_4t, \text{\ or\ } |\nabla^l h|^2\leq \fr{c_4}{t^{l-1}}\leq C^0_l\Big(1+\fr1{t^{l-1}}\Big)$$
on the interval $(0,\fr{1-\veps}{c_3}]$. When $t>\fr{1-\veps}{c_3}$, we can fix a small $\veps>0$ and consider a family of intervals of fixed length no more than $\fr{1-\veps}{c_3}$ similarly as the case $l=1$ to reach the conclusion for $l$.
\end{proof}

From Lemma \ref{lem5.5} and Proposition \ref{prop5.7} we directly obtain the following corollary:

\begin{cor}\label{cor5.8} If $\max\{|h|^2,|F|^2\}<+\infty$, then there exist constants $C'(l)$ and $C''(l)$ such that
\be\label{5.19}
|\nabla^lh|^2\leq C'(l),\quad |\nabla^lF|^2\leq C''(l),\quad l\geq 0.
\ee
\end{cor}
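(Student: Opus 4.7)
The corollary is stated as a direct consequence of Proposition~\ref{prop5.7} and Lemma~\ref{lem5.5}, so the plan is simply to assemble these two ingredients; no new estimate is needed.

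First I would establish $|\nabla^l h|^2 \leq C'(l)$ for every $l\geq 0$. The case $l=0$ is the standing hypothesis $|h|^2\leq C^0_0$. For $l\geq 1$, Proposition~\ref{prop5.7} yields $|\nabla^l h|^2\leq C^0_l(1+t^{-l})$ on $(0,\tau]$. The factor $t^{-l}$ is a mild nuisance that is handled by the smoothness of the regular solution at $t=0$: by compactness of $M^m$ and continuity, $|\nabla^l h|^2$ is bounded on some initial slab $[0,t_0]$, while on $[t_0,\tau]$ Proposition~\ref{prop5.7} itself delivers the uniform bound $C^0_l(1+t_0^{-l})$; taking the maximum of the two produces the claimed $C'(l)$.

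Next I would prove $|\nabla^l F|^2 \leq C''(l)$ by induction on $l$ using Lemma~\ref{lem5.5}. The cases $l=0,1,2$ are immediate, since $|F|^2$ is bounded by hypothesis, $|\nabla F|^2 = g^{ij}g_{ij}=m$, and $|\nabla^2 F|^2 = |h|^2$ is bounded. For $l\geq 3$, set $l=k+2$ and invoke \eqref{nbla f1} or \eqref{nbla f2} to write $\nabla^l F = \nabla^k h + T$, where $T$ is a sum of terms of the form $(*^p_q h)^i F_*(e_i)$ or $(*^p_q h)^\alpha e_\alpha$ with $q\leq k-1$, and each star product is a finite contraction of $\nabla^{r_1}h,\ldots,\nabla^{r_p}h$ with $r_j\leq q\leq l-3$. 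Taking the $g$-norm, the $F_*(e_i)$-factors get absorbed into the $g^{ij}$-contractions defining $|\cdot|_g$ while $|e_\alpha|=1$, so the triangle inequality and iterated Cauchy--Schwarz bound $|\nabla^l F|^2$ by a $g$-constant multiple of $|\nabla^k h|^2$ plus sums of products of $|\nabla^{r_j}h|^2$, all of which are already controlled by the first step. The only points requiring care in executing this plan are the $t^{-l}$ singularity in Proposition~\ref{prop5.7} (resolved by initial smoothness) and the bookkeeping of orders in the star-product notation of Lemma~\ref{lem5.5}; neither is a genuine obstacle, which is exactly why the corollary is stated as a direct consequence.
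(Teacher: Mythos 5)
Your proposal is correct and follows exactly the route the paper intends: Proposition~\ref{prop5.7} to bound $|\nabla^lh|^2$, then Lemma~\ref{lem5.5} to transfer that to $|\nabla^lF|^2$. The paper states the corollary follows ``directly'' without addressing the $t^{-l}$ singularity in Proposition~\ref{prop5.7}; your fix (continuity and compactness give a bound on an initial slab $[0,t_0]$, while the proposition gives the bound $C^0_l(1+t_0^{-l})$ on $[t_0,\tau]$) is the right way to close that small gap, and your treatment of the base cases $|\nabla F|^2=m$, $|\nabla^2F|^2=|h|^2$ and the induction via \eqref{nbla f1}--\eqref{nbla f2} is likewise what the paper has in mind.
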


In order to prove Theorem \ref{thm5.1}, we follow \cite{b} to fix a smooth metric $\mathring g$ on $M$ with the Levi-Civita connection $\mathring\nabla$, which can trivially extend to a time-independent metric on $M\times[0,T)$, still denoted by $\mathring g$. Then we need to use the corresponding induced connections, also denoted by $\mathring\nabla$, on the relevant bundles on $M\times[0,T)$ for some computations.

Since the connection coefficients $\mathring\Gamma^j_{it}$ of the time-dependent connection $\mathring\nabla$ vanishes identically, it follows from \eqref{evogij} that
\begin{align*}
(\mathring\nabla_t g)_{ij}=&\pp{}{t}g_{ij}-g_{kj}\mathring\Gamma^k_{it}-g_{ik}\mathring\Gamma^k_{jt}=\pp{}{t}\lagl F_i,F_j\ragl\\
=&\emf\Big(\frac{1}m|F|^2_i|F|^2_j-2\lagl H,h_{ij}\ragl+2g_{ij}\Big).
\end{align*}
Consequently, for any non-zero vector $v=\sum v^ie_i\in TM$, we have
$$
\fr{(\mathring\nabla_t g)(v,v)}{g(v,v)}=\emf\left(\frac{1}m\fr{g(\nabla|F|^2,v)^2}{g(v,v)}-2\fr{\lagl H, h(v,v)\ragl}{g(v,v)}+2\right)
$$
which with Cauchy inequality implies that
$$
\left|\fr{(\mathring\nabla_t g)(v,v)}{g(v,v)}\right|\leq\emf\Big(\frac{1}m|\nabla|F|^2|^2+2|H||h|+2\Big).
$$
Since
$$\nabla|F|^2=2F^\top,\quad |H|^2\leq\fr1m|h|^2,$$
we know that
$$
\left|\fr{(\mathring\nabla_t g)(v,v)}{g(v,v)}\right|\leq\emf\left(\frac{4}m|F|^2+\fr{m+1}m|h|^2+2\right).
$$
Therefore,
if $F$ and $h$ are bounded, then there is a positive constant $C$ dependent only on the bounds of $F$ and $h$ such that
$$
\left|\pp{}{t}\left(\fr{g(v,v)}{\mathring g(v,v)}\right)\right|=\left|\fr{(\mathring\nabla_t g)(v,v)}{g(v,v)}\fr{g(v,v)}{\mathring g(v,v)}\right|\leq C\fr{g(v,v)}{\mathring g(v,v)},\quad\forall\,v\in TM,
$$
from which it is easily seen that (\cite{b})
\be\label{32}
\fr1{\ol c}\mathring g\leq g\leq\ol c\mathring g
\ee
for some constant $\ol c>0$. Thus any estimation of a length function with respect to the metric $\mathring g$ is equivalent to that with respect to the metric $g$.

Let $\mathring T=\mathring\nabla-\nabla$ be the difference of the two connections. Then $\mathring T\in\Gamma(\mathcal{H}^*\otimes \mathcal{H}^*\otimes\mathcal{H})$. Moreover, for any section $S$ of a bundle constructed from the induced bundle $F^*T\bbr^{m+p}$, the horizontal distribution ${\mathcal H}\subset T(M\times[0,T))$ and the normal subbundle ${\mathcal N}\subset F^*T\bbr^{m+p}$, we have that $\mathring\nabla S-\nabla S=S*\mathring T$.

The following lemma can be found in \cite{l-z}:

\begin{lem}[\cite{l-z}]\label{lem5.9} It holds that
\be
\mathring\nabla^{l+1} F=\nabla^{l+1} F+\sum_{q=1}^l\sum_{p=1}^{l+1-q}(\mathring*^p_{l-p-q+1}\mathring T)*\nabla^q F,\quad l\geq 0.
\label{tdnabla f1}
\ee
where, for integers $p\geq 1$ and $q\geq 0$,
$$
\mathring*^p_q\mathring T=\sum_{r_1+\cdots+r_p=q}\mathring\nabla^{r_1}\mathring T*\cdots*\mathring\nabla^{r_p}\mathring T.
$$
\end{lem}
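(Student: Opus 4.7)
The plan is to prove Lemma 5.9 by induction on $l \geq 0$, using as the sole analytic input the basic identity $\mathring\nabla S = \nabla S + S * \mathring T$, valid for any section $S$ of a bundle built out of $\mathcal H$, $\mathcal N$ and $F^*T\bbr^{m+p}$. The base case $l = 0$ is immediate: the sum on the right is empty, and $\mathring\nabla F = \nabla F$ since the two connections differ only in their Christoffel symbols, which contribute nothing to the first derivative of an $\bbr^{m+p}$-valued function.

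For the inductive step, I assume the formula for $l = k$ and apply $\mathring\nabla$ to both sides. The first summand gives
\begin{equation*}
\mathring\nabla(\nabla^{k+1}F) = \nabla^{k+2}F + \nabla^{k+1}F * \mathring T,
\end{equation*}
and for each factor $(\mathring*^p_{k-p-q+1}\mathring T) * \nabla^q F$ in the inductive sum, the Leibniz rule together with the two elementary facts $\mathring\nabla(\mathring\nabla^{r_i}\mathring T) = \mathring\nabla^{r_i+1}\mathring T$ (a definitional consequence of the $\mathring*^p_r$ notation) and $\mathring\nabla(\nabla^q F) = \nabla^{q+1}F + \mathring T * \nabla^q F$, plus the bookkeeping identity $(\mathring*^p_r \mathring T) * \mathring T = \mathring*^{p+1}_r \mathring T$, yields a decomposition of the form
\begin{equation*}
\mathring\nabla\bigl((\mathring*^p_{k-p-q+1}\mathring T)*\nabla^q F\bigr)
= (\mathring*^p_{k-p-q+2}\mathring T)*\nabla^q F
+ (\mathring*^p_{k-p-q+1}\mathring T)*\nabla^{q+1}F
+ (\mathring*^{p+1}_{k-p-q+1}\mathring T)*\nabla^q F.
\end{equation*}

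The remaining task is purely combinatorial re-indexing: I verify that these three families of output terms, together with the extra piece $\nabla^{k+1}F * \mathring T$ coming from the first summand, exactly exhaust the range $q' \in [1,\,k+1]$, $p' \in [1,\,k+2-q']$ required for the level $l = k+1$ formula, with subscript $(k+1) - p' - q' + 1$ in each case. Specifically, the first family contributes $(p',q') = (p,q)$ with subscript raised by one, the second contributes $(p, q+1)$, the third contributes $(p+1, q)$, and the leftover $\nabla^{k+1}F * \mathring T = (\mathring*^1_0 \mathring T) * \nabla^{k+1}F$ fills the corner $(p',q') = (1,\,k+1)$. Since the $*$ notation absorbs all numerical multiplicities arising from terms covered more than once, the induction closes.

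The main obstacle is not analytic but notational: one has to keep careful track of which index ranges each of the three output families produces and check that no new $(p',q')$-cell is missed and no cell of the wrong subscript is introduced. This is the standard kind of induction with which \eqref{nbla f1}--\eqref{nbla f2} and \eqref{nbla1} have already been proved in the paper, so the proof structure is essentially the same and offers no new difficulty beyond patient checking.
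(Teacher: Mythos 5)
The paper does not prove Lemma~5.9 itself --- it imports the statement verbatim from the authors' earlier work \cite{l-z}, so there is no in-paper proof against which to compare your argument. Evaluating your proposal on its own merits: the induction is the right approach and the re-indexing bookkeeping is correct. The base case $\mathring\nabla F = \nabla F$ holds because a first covariant derivative of the $\bbr^{m+p}$-valued function $F$ is connection-independent, and at level $l=k+1$ your four output families
\[
(p',q')=(p,q)\ \text{with raised subscript},\quad (p,q+1),\quad (p+1,q),\quad (1,k+1)
\]
do cover the target range $q'\in[1,k+1]$, $p'\in[1,k+2-q']$ with the correct subscript $(k+1)-p'-q'+1$ in every cell, and the overlaps are harmless under the $*$-convention.

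One point you should make explicit: the $*$-notation in this paper stands for $g$-constant-coefficient contractions, and $g$ is \emph{not} parallel for $\mathring\nabla$. So when you apply $\mathring\nabla$ to a term of the form $(\mathring*^p_r\mathring T)*\nabla^q F$, the Leibniz rule produces, in addition to the three contributions you list, terms coming from $\mathring\nabla g$. Fortunately $\mathring\nabla g = \mathring\nabla g - \nabla g = g*\mathring T$, so this extra output is again of the form $(\mathring*^p_r\mathring T)*\mathring T*\nabla^q F = (\mathring*^{p+1}_r\mathring T)*\nabla^q F$ and is absorbed into your third family --- the conclusion stands, but a careful proof should say so. Similarly, your identity $(\mathring*^p_r\mathring T)*\mathring T=\mathring*^{p+1}_r\mathring T$ is an inclusion (the left side corresponds to the sub-sum with $r_{p+1}=0$), not a literal equality; this is fine because the $*$-notation allows coefficients to vanish, but it is worth being aware that the equalities in this style of argument are one-sided ``can-be-written-as'' statements rather than true identities.
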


\begin{lem}\label{lem5.10}
If $|h|^2$ and $F$ are bounded, then for each $l\geq 0$, there is a constant $C(l)$, such that $|\mathring\nabla^l\mathring T|^2\leq C(l)$.
\end{lem}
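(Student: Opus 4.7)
We proceed by strong induction on $l$, invoking Lemma~\ref{lem5.9}, the uniform $\nabla$-bounds of Corollary~\ref{cor5.8}, and the metric equivalence $g\sim\mathring g$ recorded in~\eqref{32}. Assume inductively that $|\mathring\nabla^j\mathring T|^2\leq C(j)$ for $0\leq j\leq l-1$. Since the top-order $\mathring T$-factor in the expansion of $\mathring\nabla^{k+1}F$ provided by Lemma~\ref{lem5.9} is $\mathring\nabla^{k-1}\mathring T$, the induction hypothesis combined with Corollary~\ref{cor5.8} yields uniform bounds on $|\mathring\nabla^{k+1}F|_{\mathring g}$ for all $k\leq l-1$. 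Using $g_{ij}=\bar g(\mathring\nabla F,\mathring\nabla F)$ together with iterated Leibniz then produces uniform bounds on $|\mathring\nabla^{k}g|_{\mathring g}$ for $k\leq l$, and the uniform positive-definiteness of $g$ (from~\eqref{32}) propagates these to $|\mathring\nabla^{k}g^{-1}|_{\mathring g}$.

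The starting point for $\mathring T$ itself is the Koszul-type identity
\[
\mathring T^k_{ij}=-\tfrac{1}{2}\,g^{kp}\bigl(\mathring\nabla_i g_{jp}+\mathring\nabla_j g_{ip}-\mathring\nabla_p g_{ij}\bigr),
\]
which upon taking $\mathring\nabla^l$ and expanding by Leibniz displays $\mathring\nabla^l\mathring T$ as a polynomial in $\mathring\nabla^a(g^{-1})$ and $\mathring\nabla^{b+1}g$ with $a+b=l$. Every factor except the top-order $\mathring\nabla^{l+1}g$ is controlled by the previous paragraph, and $\mathring\nabla^{l+1}g$ is in turn controlled as soon as $|\mathring\nabla^{l+2}F|_{\mathring g}$ is bounded.

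The principal difficulty is the base case $l=0$: substituting Lemma~\ref{lem5.9} (with $l=1$) into the Koszul identity is tautological, so $|\mathring T|_{\mathring g}$ cannot be extracted from these algebraic relations alone. The way out is an external parabolic-regularity input: classical Schauder estimates applied to the nondegenerate DeTurck reformulation~\eqref{dtmcf} of the flow (whose coefficients are uniformly controlled by the bounds on $|h|$ and $|F|$) deliver a uniform $C^{2,\alpha}$-bound on the DeTurck solution $\hat F$; transferring back to $F$ via the smooth diffeomorphism family $\varphi_t$ furnished by Lemma~\ref{tmdptfl} gives a uniform bound on $|\mathring\nabla^2F|_{\mathring g}$, and the Pythagorean decomposition $|\mathring\nabla^2F|^2=|h|^2+|\mathring T\cdot dF|^2$ (from $h\perp F_k$) combined with~\eqref{32} then delivers $|\mathring T|^2_{\mathring g}\leq C(0)$. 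For the inductive step $l\geq 1$, the same Schauder argument iterated to order $l+2$ yields uniform $C^{l+2,\alpha}$-bounds on $F$, hence on $|\mathring\nabla^{l+2}F|_{\mathring g}$, and the reduction above delivers $|\mathring\nabla^l\mathring T|^2\leq C(l)$. The main obstacle is thus not the algebra but the necessity of invoking this external regularity, without which the bootstrap between $\mathring T$-derivatives and $F$-derivatives does not close.
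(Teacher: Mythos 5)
Your proposal diverges substantially from the paper's argument, and the divergence introduces a genuine gap. You correctly observe that the algebraic relations linking $\mathring\nabla^l\mathring T$, $\mathring\nabla^{l+1}g$ and $\mathring\nabla^{l+2}F$ (via Lemma~\ref{lem5.9} and the Koszul formula) are tautological and cannot close on their own — this is a sound diagnosis. The problem lies in the external input you invoke to break the circle. You assert that the coefficients of the DeTurck equation~\eqref{dtmcf} are ``uniformly controlled by the bounds on $|h|$ and $|F|$,'' but this is precisely where the circularity re-enters. The ellipticity and H\"older regularity of the leading coefficient $e^{\fr am|\hat F|^2}c\,g^{ij}_{\hat F}$, \emph{measured against the fixed background metric} $\mathring g$, depend on $g_{\hat F}=\hat\vfi_t^*g_F$, i.e.\ on the differential of the DeTurck diffeomorphism $\hat\vfi_t$. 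Equivalence~\eqref{32} controls $g_F$ against $\mathring g$, not $g_{\hat F}$. Controlling $D\hat\vfi_t$ requires, by ODE theory (Lemma~\ref{tmdptfl}), bounds on the generating vector field $W(F)$ and its spatial derivatives; but $W^k=g^{ij}\bigl(\Gamma^k_{ij}-\mathring\Gamma^k_{ij}\bigr)=-g^{ij}\mathring T^k_{ij}$, so bounding $W$ (and a fortiori its higher derivatives, needed for $C^{l+2,\alpha}$-Schauder) is essentially equivalent to the conclusion of the lemma. The Schauder step therefore presupposes what it is meant to establish, and the bootstrap does not close.

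The paper takes a quite different, self-contained route that avoids the DeTurck reformulation entirely. It computes the time evolution of $\mathring T$ directly, noting $\mathring\nabla_t\mathring T^k_{ij}=-\pp{}{t}\Gamma^k_{ij}$, and substitutes the metric evolution~\eqref{evogij} to obtain the closed-form identity~\eqref{nabla0t1}
\[
\mathring\nabla_t\mathring T=\emf\Bigl((\nabla|F|^2)^3+\nabla|F|^2*h^2+\nabla|F|^2*\id_{TM}+\nabla|F|^2*F^\bot*h+\nabla h*h\Bigr),
\]
whose right-hand side is controlled by the hypothesis (via Corollary~\ref{cor5.8}). Converting $\mathring\nabla_t$ to $\nabla_t$ by~\eqref{ad5.25} and applying Young's inequality yields $\pp{}{t}|\mathring T|^2\leq C_1\bigl(1+|\mathring T|^2\bigr)$, and the finiteness of $T$ (Corollary~\ref{cor4.9}) closes the Gronwall estimate. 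The inductive step for $\mathring\nabla^l\mathring T$ runs identically off the structural formula~\eqref{tdnablat t}. To repair your argument you would need to replace the Schauder appeal with some such evolution-equation or differential-inequality input for $\mathring T$ itself; the purely elliptic/parabolic regularity route, as stated, does not avoid the circularity you yourself flagged.
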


\begin{proof} To prove the lemma, we need the following formula:
\be\label{tdnablat t}
\mathring\nabla_t\mathring\nabla^l\mathring T=\emf(P_1+P_2*\mathring\nabla^{l-1}\mathring T),\quad l\geq 0,
\ee
where $\mathring\nabla^{-1}=0$, $\mathring\nabla^0=1$, and
$$P_1=P_1(g,F,\nabla F,h,\nabla h,\cdots,\nabla^{l+1}h,\mathring T,\mathring\nabla\mathring T,\cdots,\mathring\nabla^{l-2}\mathring T)$$
is a polynomial of $g,F,\nabla F,h,\nabla h,\cdots,\nabla^{l+1}h,\mathring T,\mathring\nabla\mathring T,\cdots,\mathring\nabla^{l-2}\mathring T$,
while
\be\label{p2}
P_2=(\nabla|F|^2)^3+\nabla|F|^2*h^2+\nabla|F|^2*\id_{TM}+\nabla|F|^2*F^\bot*h+\nabla h*h
\ee
is a fixed polynomial of $g,F,\nabla F,h,\nabla h$.

Formula \eqref{tdnablat t} can be shown by the method of induction.

First we consider $l=0$. By the definition of the connection $\mathring\nabla$ on $M\times[0,T)$, it is easily seen that $\mathring\nabla_t\mathring T^k_{ij}=\pp{}{t}\mathring T^k_{ij}=-\pp{}{t}\Gamma^k_{ij}$. Since for each fixed $t\in[0,T)$, $\pp{}{t}\Gamma^k_{ij}$ is a $(1,2)$-tensor on the Riemannian manifold $(M\times\{t\},g)$ w.r.t. the indices $i,j,k$, by using a normal coordinates $(u^i)$ of the induced metric $g$ and \eqref{evogij} we can find that, at each fixed point $p$ on $M\times\{t\}$,
\begin{align}
\mathring\nabla_t\mathring\nabla^0\mathring T^k_{ij}=&-\pp{}{t}\Gamma^k_{ij}=-\fr12g^{kl}\left(\pp{}{u^j}\pp{g_{il}}{t} +\pp{}{u^i}\pp{g_{jl}}{t}-\pp{}{u^l}\pp{g_{ij}}{t}\right)\nnm\\
=&-\fr12g^{kl}\bigg(\nabla_{\pp{}{u^j}}\Big(\emf\big(\fr1m|F|^2_i|F|^2_l-2\lagl H,h_{il}\ragl+2g_{il}\big)\Big)\nnm\\
&\qquad\qquad+\nabla_{\pp{}{u^i}}\Big(\emf\big(\fr1m|F|^2_j|F|^2_l-2\lagl H,h_{jl}\ragl+2g_{jl}\big)\Big)\nnm\\
&\qquad\qquad-\nabla_{\pp{}{u^l}}\Big(\emf\big(\fr1m|F|^2_i|F|^2_j-2\lagl H,h_{ij}\ragl+2g_{ij}\big)\Big)\bigg)\nnm\\
=&-\fr12g^{kl}\emf\bigg(\fr1m|F|^2_j\Big(\fr1m|F|^2_i|F|^2_l-2\lagl H,h_{il}\ragl+2g_{il}\Big)+\nabla_j\Big(\fr1m|F|^2_i|F|^2_l-2\lagl H,h_{il}\ragl+2g_{il}\Big)\nnm\\
&\qquad\quad+\fr1m|F|^2_i\Big(\fr1m|F|^2_j|F|^2_l-2\lagl H,h_{jl}\ragl+2g_{jl}\Big)+\nabla_i\Big(\fr1m|F|^2_j|F|^2_l-2\lagl H,h_{jl}\ragl+2g_{jl}\Big)\nnm\\
&\qquad\quad-\fr1m|F|^2_l \Big(\fr1m|F|^2_i|F|^2_j-2\lagl H,h_{ij}\ragl+2g_{ij}\Big)-\nabla_l\Big(\fr1m|F|^2_i|F|^2_j-2\lagl H,h_{ij}\ragl+2g_{ij}\Big)\bigg)\nnm\\
=&-\fr12g^{kl}\emf\bigg(\fr{1}{m^2}|F|^2_i|F|^2_j|F|^2_l-\fr2m\Big(|F|^2_j\lagl H,h_{il}\ragl+|F|^2_i\lagl H,h_{jl}\ragl-|F|^2_l\lagl H,h_{ij}\ragl\Big)\nnm\\
&\qquad\quad+\fr2m\Big(|F|^2_jg_{il}+|F|^2_ig_{jl}-|F|^2_lg_{ij}\Big)\nnm\\
&\qquad\quad+\fr1m\Big(|F|^2_{i,j}|F|^2_l+|F|^2_i|F|^2_{l,j}+|F|^2_{j,i}|F|^2_l+|F|^2_j|F|^2_{l,i} -|F|^2_{i,l}|F|^2_j-|F|^2_i|F|^2_{j,l}\Big)\nnm\\
&\qquad\quad-2\Big(\lagl H_{,j},h_{il}\ragl+\lagl H,h_{ilj}\ragl+\lagl H_{,i},h_{jl}\ragl+\lagl H,h_{jli}\ragl
-\lagl H_{,l},h_{ij}\ragl-\lagl H,h_{ijl}\ragl\Big)\bigg)\nnm\\
=&\emf\Big((\nabla|F|^2)^3+\nabla|F|^2*h^2+\nabla|F|^2*\id_{TM}+\nabla|F|^2*F^\bot*h+\nabla h*h\Big)^k_{ij}\label{nabla0t}
\end{align}
where in the last equality we have used the identity that $|F|^2_{i,j}=2(g_{ij}+\lagl F^\bot,h_{ij}\ragl)$. By the arbitrariness of the point $p$, we finally obtain from \eqref{nabla0t} that
\be\label{nabla0t1}
\mathring\nabla_t\mathring\nabla^0\mathring T=\emf((\nabla|F|^2)^3+\nabla|F|^2*h^2+\nabla|F|^2*\id_{TM}+\nabla|F|^2*F^\bot*h+\nabla h*h).
\ee
Thus \eqref{tdnablat t} holds for $l=0$ with
$$
P_1=(\nabla|F|^2)^3+\nabla|F|^2*h^2+\nabla|F|^2*\id_{TM}+\nabla|F|^2*F^\bot*h+\nabla h*h.
$$

Suppose that \eqref{tdnablat t} holds for $l=k\geq 0$, i.e.,
\be
\mathring\nabla_t\mathring\nabla^k\mathring T=\emf(P_1+P_2*\mathring\nabla^{k-1}\mathring T),
\ee
where
$P_1$ is a polynomial of $g,F,\nabla F,h,\nabla h,\cdots,\nabla^{k+1}h,\mathring T,\mathring\nabla\mathring T,\cdots,\mathring\nabla^{k-2}\mathring T$, and
$P_2$ is given by \eqref{p2}. Then for $l=k+1$, we compute
\begin{align}\label{tdnabk+1t}
\mathring\nabla_t\mathring\nabla^{k+1}\mathring T=&\mathring\nabla(\mathring\nabla_t\mathring\nabla^k\mathring T)
=\mathring\nabla\left(\emf\right)(P_1+P_2*\mathring\nabla^{k-1}\mathring T)+\emf\big((\nabla+\mathring T)*P_1\nnm\\
&+\big((\nabla+\mathring T)*P_2\big)*\mathring\nabla^{k-1}\mathring T+P_2*\mathring\nabla\mathring\nabla^{k-1}\mathring T\big)\nnm\\
=&\fr1m\emf\nabla|F|^2(P_1+P_2*\mathring\nabla^{k-1}\mathring T)
+\emf\big(\nabla P_1+\mathring T*P_1+\nabla P_2*\mathring\nabla^{k-1}\mathring T\nnm\\
&+\mathring T*P_2*\mathring\nabla^{k-1}\mathring T+P_2*\mathring\nabla\mathring\nabla^{k-1}\mathring T\big).
\end{align}
It follows easily from \eqref{tdnabk+1t} that
$$
\mathring\nabla_t\mathring\nabla^{k+1}\mathring T=\emf(P'_1+P_2*\mathring\nabla^{k+1-1}\mathring T),
$$
where $P'_1$ is a polynomial of $g,F,\nabla F,h,\nabla h,\cdots,\nabla^{k+1+1}h,\mathring T,\mathring\nabla\mathring T,\cdots,
\mathring\nabla^{k+1-2}\mathring T$, and $P_2$ is given by \eqref{p2}. Therefore, \eqref{tdnablat t} holds for $l=k+1$ and thus holds for all $l\geq 0$ by induction.

Now we can use \eqref{tdnablat t} to complete the proof of Lemma \ref{lem5.10}.

Note that, by Corollary \ref{cor5.8}, if $|h|^2$ and $F$ are uniformly bounded from above for $t\in[0,T)$, then all of $\nabla^lh$ and $\nabla^lF$ are also uniformly bounded for $l\geq 0$. On the other hand, by \eqref{gamat}, we know that $\nabla_t$ and $\mathring\nabla_t$ are related by
\be\label{ad5.25}
\nabla_tS=\mathring\nabla_tS+\emf\left(h^2+(\nabla|F|^2)^2+1\right)*S
\ee
for any bundle-valued tensor $S$ on $M\times[0,T)$. Then, from \eqref{nabla0t1} and Young's inequality, it follows that, when $l=0$,
\begin{align*}
\pp{}{t}|\mathring T|^2=&2\lagl\nabla_t\mathring T,\mathring T\ragl=\left\lagl\mathring\nabla_t\mathring T+\emf\left(h^2 +(\nabla|F|^2)^2+1\right)*\mathring T,\mathring T\right\ragl\\
=&\emf\left(P_1+\left(h^2 +(\nabla|F|^2)^2+1\right)*\mathring T\right)*\mathring T\\
\leq&C_1(1+|\mathring T|^2),
\end{align*}
which implies that
$$\pp{}{t}\log(1+|\mathring T|^2)\leq C_1,\quad\text{or}\quad\pp{}{t}(\log(1+|\mathring T|^2)-C_1t)\leq 0.$$
Since $M$ is compact, we find
$$
\log(1+|\mathring T|^2)-C_1t\leq(\log(1+|\mathring T|^2)-C_1t)|_{t=0}=(\log(1+|\mathring T|^2))|_{t=0}\leq C_2.
$$
So $|\mathring T|^2<1+|\mathring T|^2\leq e^{C_1t+C_2}\leq C(0)$ since the interval $[0,T)$ is bounded.

Suppose that $|\mathring T|^2$, $\cdots$, $|\mathring\nabla^{l-1}\mathring T|^2$ have been shown bounded for $l\geq 1$,
then by \eqref{tdnablat t}, \eqref{ad5.25} and Young's inequality,
\begin{align*}
\pp{}{t}|\mathring\nabla^l\mathring T|^2=&2\lagl\nabla_t\mathring\nabla^l\mathring T,\mathring\nabla^l\mathring T\ragl
=2\left\lagl\mathring\nabla_t\mathring\nabla^l\mathring T+\emf\left(h^2+(\nabla|F|^2)^2+1\right)*\mathring\nabla^l\mathring T,\mathring
\nabla^l\mathring T\right\ragl\\
=&\emf\left(P_1+P_2*\mathring\nabla^{l-1}\mathring T+\left(h^2+(\nabla|F|^2)^2+1\right)*\mathring\nabla^l\mathring T\right)*\mathring
\nabla^l\mathring T
\leq C_3(1+|\mathring\nabla^l\mathring T|^2).
\end{align*}
Thus, as in the case of $l=0$, there exists a $C(l)>0$ such that
$|\mathring\nabla^l\mathring T|^2\leq C(l)$.

Thus the principle of induction finishes the proof.
\end{proof}

Combining Corollary \ref{cor5.8}, Lemma \ref{lem5.9}, Lemma \ref{lem5.10} and the comparability  \eqref{32} of $g$ with $\mathring g$ we have proved the following boundedness result:

\begin{prop}\label{prop5.11} If $|h|^2$ and $F$ are bounded as $t\to T$, then for each $l\geq 0$, there exists a constant $C(l)>0$ such that
$|\mathring\nabla^l F|^2_{\mathring g}\leq\td C(l)|\mathring\nabla^l F|^2_g\leq C(l)$ where $\td C(l)>0$.
\end{prop}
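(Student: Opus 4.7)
The plan is to combine the three ingredients already on the table: the metric comparability \eqref{32}, the tensor expansion in Lemma \ref{lem5.9}, and the uniform bounds in Corollary \ref{cor5.8} and Lemma \ref{lem5.10}. The statement splits naturally into two separate inequalities, and I would handle them in different ways.

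For the first inequality $|\mathring\nabla^l F|^2_{\mathring g}\leq\td C(l)|\mathring\nabla^l F|^2_g$, I would simply invoke \eqref{32}. Since $\mathring\nabla^l F$ is a section of $(T^*M)^{\otimes l}\otimes F^*T\bbr^{m+p}$ and the bundle metric on the $F^*T\bbr^{m+p}$ factor comes from the fixed flat $\ol g$ (so it is common to both norms), only the horizontal cotangent factors are affected by the change of metric on $M$. The uniform two-sided bound $\fr1{\ol c}\mathring g\leq g\leq\ol c\mathring g$ then yields $|\cdot|_{\mathring g}\leq\ol c^{l/2}|\cdot|_g$ on such tensors, giving the claim with $\td C(l)=\ol c^{l}$.

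The work is in the second inequality $|\mathring\nabla^l F|^2_g\leq C(l)$. For $l=0$ it is the hypothesis on $F$, and for $l=1$ the connection coefficients do not yet enter, so $\mathring\nabla F=\nabla F$ has $g$-norm equal to $m^{1/2}$. For $l\geq2$, I would apply Lemma \ref{lem5.9} (with $l$ replaced by $l-1$) to write
\begin{equation*}
\mathring\nabla^l F=\nabla^l F+\sum_{q=1}^{l-1}\sum_{p=1}^{l-q}(\mathring*^p_{l-p-q}\mathring T)*\nabla^q F,
\end{equation*}
and then estimate each summand factor by factor. Corollary \ref{cor5.8} bounds $|\nabla^q F|_g$ for $1\leq q\leq l$, while Lemma \ref{lem5.10} bounds $|\mathring\nabla^{r_i}\mathring T|_{\mathring g}$ for the required orders $r_i$. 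Converting the $\mathring T$-bounds from $\mathring g$ to $g$ costs only another application of \eqref{32}. Since each $*$-contraction is $g$-linear with constant coefficients, the Cauchy--Schwarz inequality gives a pointwise bound on each term as a product of already-controlled factors, and the triangle inequality closes the estimate.

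I do not anticipate a genuine obstacle here: the three cited results are tailor-made for exactly this bookkeeping, and the only care needed is to keep track of which metric ($g$ or $\mathring g$) is being used at each step and to absorb the comparability constants. The most delicate (though still routine) point will be checking that the combinatorial shape of Lemma \ref{lem5.9} really only involves $\mathring\nabla^{r_i}\mathring T$ with $r_i\leq l-2$ and $\nabla^q F$ with $q\leq l$, so that Lemma \ref{lem5.10} and Corollary \ref{cor5.8} apply to every factor that appears.
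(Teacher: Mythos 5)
Your proposal is correct and follows the same route the paper intends: the paper gives no separate proof of Proposition 5.11, merely stating it as an immediate consequence of Corollary \ref{cor5.8}, Lemma \ref{lem5.9}, Lemma \ref{lem5.10}, and the metric equivalence \eqref{32}, and your argument is exactly that combination carried out explicitly, with the correct index check that the $\mathring*^p_{l-p-q}\mathring T$ factors only ever involve $\mathring\nabla^{r_i}\mathring T$ with $r_i\leq l-2$ (since $p,q\geq 1$) and $\nabla^q F$ with $q\leq l$.
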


To prove Theorem \ref{thm5.1}, we also need the following formula:
\begin{lem}\label{lem5.13} It holds that
\be\label{5.25}
\pp{}{t}\mathring\nabla^lF=\sum_{p+q=l}\left(\sum_{r+s+t=p}(\mathring*^r_s\mathring T)*\nabla^t \left(\emf\right)\right)*\left(\sum_{r+s+t=q+2,\,t\geq 1}(\mathring*^r_s\mathring T)*\mathring\nabla^t F+\mathring\nabla^q F\right),\quad l\geq 0.
\ee
\end{lem}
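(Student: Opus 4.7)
The crucial structural input is that the auxiliary metric $\mathring g$ is time-independent (it is just the trivial extension to $M\times[0,T)$ of a fixed metric on $M$), so $\mathring\Gamma^{j}_{it}\equiv 0$ and the connection $\mathring\nabla$ commutes with $\partial/\partial t$. The plan is to start from the commutation
$$\pp{}{t}\mathring\nabla^lF=\mathring\nabla^l\Big(\pp{F}{t}\Big)=\mathring\nabla^l\bigl(\emf(H+F)\bigr),$$
then split the right-hand side by Leibniz,
$$\mathring\nabla^l\bigl(\emf(H+F)\bigr)=\sum_{p+q=l}\binom{l}{p}\,(\mathring\nabla^p\emf)*\mathring\nabla^q(H+F),$$
with the binomial constants absorbed into the $*$-notation, and expand the two factors independently by two parallel inductions.

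For the scalar factor I would establish by induction on $p$ the identity
$$\mathring\nabla^p\emf=\sum_{r+s+t=p}(\mathring*^{r}_{s}\mathring T)*\nabla^{t}\emf,$$
interpreting the $r=s=0$ summand as $\nabla^{p}\emf$. The base case is $\mathring\nabla\emf=\nabla\emf$; the inductive step applies $\mathring\nabla$ to a generic summand and uses the elementary rules $\mathring\nabla(\mathring*^{r}_{s}\mathring T)=\mathring*^{r}_{s+1}\mathring T$ together with $(\mathring\nabla-\nabla)=-\,\mathring T*(\cdot)$ on covariant tensors, so that $\mathring\nabla(\nabla^{t}\emf)$ splits schematically into a $\nabla^{t+1}\emf$-term (raising $t$) and a $\mathring T*\nabla^{t}\emf$-term (raising $r$). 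In every case the triple $(r,s,t)$ acquires exactly one unit and the expansion closes at level $p+1$.

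For the second factor, the $F$-piece contributes the isolated term $\mathring\nabla^qF$, while the $H$-piece is seeded by
$$H=g^{ij}F_{i,j}=g^{ij}(\mathring\nabla^{2}F)_{ij}+g^{ij}\mathring T^{k}_{ij}F_{k},$$
which follows from $(\nabla^{2}F)_{ij}-(\mathring\nabla^{2}F)_{ij}=\mathring T^{k}_{ij}F_{k}$ and puts $H$ into the schematic form of a sum over $r+s+t=2,\,t\ge 1$ of $(\mathring*^{r}_{s}\mathring T)*\mathring\nabla^{t}F$. A second induction on $q$, repeatedly applying $\mathring\nabla$ via $\mathring\nabla(\mathring*^{r}_{s}\mathring T)=\mathring*^{r}_{s+1}\mathring T$ and $\mathring\nabla(\mathring\nabla^{t}F)=\mathring\nabla^{t+1}F$, promotes the constraint to $r+s+t=q+2,\,t\ge 1$: each differentiation raises exactly one of $s$ or $t$ and can never lower $t$ or introduce a new $\mathring T$-factor (since both sides of every identity are already expressed through $\mathring\nabla$ exclusively). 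Assembling the two expansions and absorbing the remaining constants into the $*$-notation yields \eqref{5.25}.

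The principal source of difficulty is the purely combinatorial bookkeeping of the triples $(r,s,t)$: one needs to verify that the condition $t\ge 1$ in the second factor is genuinely stable under $\mathring\nabla$ (no summand with $t=0$ can be spontaneously produced once the scalar $\emf$ has been peeled off by the initial Leibniz step), and one must accept that the $*$-notation, being a placeholder for arbitrary $g$-constant linear combinations (including zero coefficients), is what legitimises writing the first factor as an unrestricted sum over $r+s+t=p$ even though many of the specific coefficients that would arise from a full expansion will in fact vanish.
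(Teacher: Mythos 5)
Your proof is correct and follows essentially the same route as the paper: both start from $\pp{}{t}\mathring\nabla^lF=\mathring\nabla^l\bigl(\emf(H+F)\bigr)$ (using time-independence of $\mathring g$), apply Leibniz, and substitute expansion formulae for $\mathring\nabla^p\emf$ and $\mathring\nabla^q H$; the only difference is that the paper cites those two auxiliary identities directly from \cite{l-z} (equations (5.24) and (5.25) there), whereas you re-derive them by induction. One small inaccuracy in your inductive step for $\mathring\nabla^q H$: applying $\mathring\nabla$ \emph{can} introduce a new $\mathring T$-factor, via $\mathring\nabla g = g*\mathring T$ acting on the $g$-constant coefficients hidden in the $*$-notation, but since this only raises $r$ by one it still preserves both the constraint $r+s+t=q+2$ and $t\ge 1$, so your conclusion stands.
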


\begin{proof} Take $H$ as a section of the induced bundle $F^*T\bbr^{m+p}$. Then we have the following two equations (see equations (5.24) and (5.25) in \cite{l-z}):
\begin{align}
&\mathring\nabla^p\left(\emf\right)=\sum_{r+s+t=p}(\mathring*^r_s\mathring T)*\nabla^t\left(\emf\right),\quad p\geq 0,\label{5.26}\\
&\mathring\nabla^qH=\sum_{r+s+t=q+2,\,t\geq1}(\mathring*^r_s\mathring T)*\mathring\nabla^t F,\quad q\geq 0.\label{5.27}
\end{align}
Thus \eqref{5.25} easily follows by
$$
\pp{}{t}\mathring\nabla^lF=\mathring\nabla^l\pp{F}{t}=\mathring\nabla^l\left(\emf(H+F)\right) =\sum_{p+q=l}\mathring\nabla^p\left(\emf\right)*(\mathring\nabla^qH+\mathring\nabla^q F), \quad l\geq 0.
$$
\end{proof}

{\em The proof of Theorem \ref{thm5.1}}.

The finiteness of the time $T$ comes from Corollary \ref{cor4.9}.

If Theorem \ref{thm5.1} is not true, then by Corollary \ref{cor5.8}, all the covariant derivatives $\nabla^lh$ and $\nabla^lF$ will be uniformly bounded from above. This with Lemma \ref{lem-nbla1} shows that $\nabla^l\left(\emf\right)$ are bounded for all $l\geq 0$. On the other hand, by Lemma \ref{lem5.10} and Proposition \ref{prop5.11}, we obtain that $\mathring\nabla^l\mathring T$ and $\mathring\nabla^lF$ are also bounded for all $l\geq 0$. Therefore, by Lemma \ref{lem5.13}, it is easily seen that, for each $l\geq 0$, $|\pp{}{t}\mathring\nabla^lF|$ will be uniformly bounded from above by a constant $C(l)>0$. Consequently we have
$$\left|\mathring\nabla^lF(u,t_1)-\mathring\nabla^lF(u,t_2)\right|\leq\left|\int^{t_2}_{t_1}\pp{}{t}\mathring\nabla^lFdt\right|\leq C(l)|t_1-t_2|,\quad l\geq 0,$$
for all $u\in M^m$ and $t_1,t_2\in (0,T)$. So, due to the Cauchy convergence theorem,
$\mathring\nabla^lF$ will converge uniformly as $t\to T$, implying that $F(\cdot,t)$ will converge in $C^\infty$-topology to a limit immersion $F(\cdot,T):M^m\to\bbr^{m+p}$. By the existence theorem (Theorem \ref{exiuni}) of the short time solution, $[0,T)$ could not be the maximal time interval for the existence of the regular solution, which is a contradiction. Thus Theorem \ref{thm5.1} is proved.

\section{Proof of the main theorem}

Based on the previous discussions, we shall give a proof of the following main theorem in this paper:
\begin{thm}\label{thm6.1}
Let $F_0:M^m\to\bbr^{m+p}$ be an immersed compact submanifold satisfying $|F_0|^2\neq m$ everywhere on $M^m$. Denote
\be\label{t1t2}
\hat T_1:=\fr m{2(m-\max|F_0|^2)}\left(1-e^{-\frac1m\max|F_0|^2}\right),\quad \hat T_2:=\fr m{2(\min|F_0|^2-m)}e^{-\frac1m\min|F_0|^2}.
\ee
Then there exists a maximal $T>0$, and a smooth solution $F:M^m\times[0,T)\to\bbr^{m+p}$ to the flow \eqref{flow}, such that, for each $t\in[0,T)$, $F_t:M^m\to\bbr^{m+p}$ is an immersion. Furthermore,

(1) If $\min|F_0|^2<m$, then we have either
$$T<\hat T_1 \text{\ \ and\ \ }\lim\limits_{t\to T}\max|h|^2=+\infty$$
or
$$T=\hat T_1 \text{\ \ and\ \ }\lim\limits_{t\to \hat T_1}\max|F|^2=0,$$
namely, $F(M^m)$ converges to the origin as $t$ tends to $\hat T_1$;

(2) If $\max|F_0|^2>m$, then we have either $T<\hat T_2$ and one of the following two holds:

(i) $\lim\limits_{t\to T}\max|h|^2=+\infty$;

(ii) $\lim\limits_{t\to T}\max|F|^2=+\infty$;
\\
or $T=\hat T_2$ and $\lim\limits_{t\to\hat T_2}\min|F|^2=+\infty$.
\end{thm}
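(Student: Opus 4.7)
The plan is to combine the short-time existence of Theorem \ref{exiuni} and the finite-time blow-up dichotomy of Theorem \ref{thm5.1} with a direct scalar comparison between $|F|^2$ and the radius-squared of the associated spherical flow. The flow \eqref{flow} is the case $a = b = c = 1$ of \eqref{flow'}, so Theorem \ref{exiuni} produces a unique maximal regular solution on some interval $[0, T)$ with $T > 0$, and Theorem \ref{thm5.1} already asserts $T < +\infty$ together with either $\max|F|^2 \to \infty$ or $\max|h|^2 \to \infty$ as $t \to T$. What remains to do is to sharpen this into the quantitative statement involving $\hat T_1$ and $\hat T_2$, and to exclude the wrong alternative in the equality cases. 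The key tool will be the scalar evolution \eqref{|F|^2}:
$$\pp{}{t}|F|^2 = \emf\bigl(\Delta|F|^2 + 2(|F|^2 - m)\bigr).$$

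For Case (1), connectedness of $M^m$ together with $|F_0|^2 \neq m$ and $\min|F_0|^2 < m$ forces $\max|F_0|^2 < m$. I would set $\phi(t) := \max_{M^m}|F(\cdot,t)|^2$, which is locally Lipschitz in $t$; at any spatial maximum point of $|F|^2$ one has $\Delta|F|^2 \leq 0$, and a standard envelope (Danskin/Hamilton) argument converts the PDE above into the Dini differential inequality $\phi'(t) \leq 2 e^{\phi/m}(\phi - m)$. Comparing with the scalar ODE $\psi' = 2e^{\psi/m}(\psi - m)$ and $\psi(0) = \max|F_0|^2$ - precisely the spherical ODE analyzed just before Theorem \ref{thm4.5}, which for $\psi(0) \in [0, m)$ is defined on $[0, \hat T_1)$ and satisfies $\psi(t) \to 0$ as $t \to \hat T_1^-$ - gives $\phi \leq \psi$ on $[0, T) \cap [0, \hat T_1)$. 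If one had $T > \hat T_1$, continuity would force $\phi(\hat T_1) \leq 0$ at an interior regular time, making $F_{\hat T_1}$ the zero map and contradicting the immersion property, so $T \leq \hat T_1$. When $T < \hat T_1$, $|F|^2$ stays uniformly bounded by $\max|F_0|^2 < m$, so Theorem \ref{thm5.1} forces $\max|h|^2 \to \infty$; when $T = \hat T_1$, the bound $\max|F|^2 \leq \psi \to 0$ delivers the stated convergence of $F_t(M^m)$ to the origin.

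Case (2) proceeds symmetrically. Now $\max|F_0|^2 > m$, and connectedness gives $\min|F_0|^2 > m$. Set $\chi(t) := \min_{M^m}|F(\cdot,t)|^2$; at a spatial minimum $\Delta|F|^2 \geq 0$, so the reversed Dini inequality $\chi'(t) \geq 2 e^{\chi/m}(\chi - m)$ holds. Comparing with the same ODE, now with $\psi(0) = \min|F_0|^2 > m$, yields $\chi \geq \psi$; this $\psi$ blows up at time $\hat T_2$. If $T > \hat T_2$ then $\chi(t) \to +\infty$ as $t \to \hat T_2$ while $F_{\hat T_2}$ is still a smooth map of compact $M^m$, hence bounded, a contradiction; so $T \leq \hat T_2$. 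When $T = \hat T_2$, the inequality $\min|F|^2 \geq \psi \to +\infty$ is the desired conclusion, and when $T < \hat T_2$ the dichotomy between $\max|h|^2$- and $\max|F|^2$-blow-up is exactly what Theorem \ref{thm5.1} provides.

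The only genuinely technical step is the rigorous derivation of the Dini differential inequalities for the Lipschitz envelopes $\phi$ and $\chi$ from the pointwise evolution \eqref{|F|^2}, together with the ODE comparison on the right-maximal interval of the reference ODE; this is standard Danskin/Hamilton-type analysis but must be done carefully since $\phi$ and $\chi$ are a priori only Lipschitz. Everything else reduces to contradicting the fact that a smooth immersion of compact $M^m$ cannot have $|F|^2 \equiv 0$ or $|F|^2 \equiv +\infty$ at an interior regular time.
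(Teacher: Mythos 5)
Your proposal takes essentially the same route as the paper: compare the scalar $|F|^2$ with a spherical ODE solution via the evolution equation \eqref{|F|^2}, and then invoke the blow-up dichotomy of Theorem \ref{thm5.1} together with the $|F|^2$-bound from Proposition \ref{prop4.1} to identify which quantity must blow up. The one genuine departure is in how the comparison is packaged. The paper goes through Proposition \ref{prop4.8}, which compares $|F|^2$ against a spherical flow with a strictly shifted initial radius $R'_0$, obtains the strict inequality $|F|^2 - R'^2 < -\veps$ (resp.\ $> \veps$), and then lets $R'_0$ tend to the extremal value of $|F_0|^2$. You instead apply Hamilton's trick directly to $\phi(t)=\max|F|^2$ and $\chi(t)=\min|F|^2$ and compare via a Dini-derivative ODE inequality with the same reference ODE. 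This is a cleaner and more elementary packaging of the same maximum-principle idea and buys you a more direct argument for $T\leq\hat T_1$ (resp.\ $T\leq\hat T_2$); the trade-off is that you must carry out the Dini/comparison theorem carefully, as you note, rather than relying on the already-established Proposition \ref{prop4.8}.

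One caution, which you inherit from the paper's own Corollary \ref{cor4.6}: the assertion that the scalar ODE $\psi'=2e^{\psi/m}(\psi-m)$ with $\psi(0)=R_0^2\in(0,m)$ is ``defined on $[0,\hat T_1)$ and satisfies $\psi(t)\to 0$ as $t\to\hat T_1^-$'' is not actually correct. Since $\psi(t)<R_0^2$ strictly for $t>0$, integrating $(e^{-\psi/m})'=\tfrac2m(m-\psi)$ gives the \emph{strict} inequality $e^{-\psi(t)/m}-e^{-R_0^2/m}>\tfrac2m(m-R_0^2)\,t$ for $t>0$, so the extinction time $T^*$ (where $\psi=0$) satisfies $T^*<\hat T_1$ strictly; the same strictness holds for $T^*<\hat T_2$ in Case (2). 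Thus $\hat T_1$ and $\hat T_2$ are strict upper bounds for the spherical lifespan, not the exact extinction/blow-up times. This does not damage your conclusion: your ODE comparison in fact forces $T\leq T^*<\hat T_1$ (resp.\ $T\leq T^*<\hat T_2$), so the $T=\hat T_1$ (resp.\ $T=\hat T_2$) alternative in the theorem is vacuous and the ``$T<\hat T_1$, $\max|h|^2\to\infty$'' (resp.\ Theorem \ref{thm5.1} dichotomy) branch always holds. But you should not assert that $\psi(t)\to 0$ as $t\to\hat T_1^-$; what you actually use, and what is true, is only that $\psi$ hits $0$ at or before $\hat T_1$.
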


\begin{proof}
Let $F_0:M^m\to\bbr^{m+p}$ be the initial submanifold given in the theorem. Since $|F_0(p)|^2\neq m$ for each point $p\in M^m$, the connectedness of $M^m$ implies that either $\max|F_0|^2<m$ or $\min|F_0|^2>m$. By the existence theorem (Theorem \ref{exiuni}) and Corollary \ref{cor4.9}, there is a maximal interval $[0,T)$ with $T<+\infty$ such that a regular solution $F:M^m\times[0,T)\to\bbr^{m+p}$ to the flow \eqref{flow} exists. Moreover, from Theorem \ref{thm5.1}, we also know that either $\lim\limits_{t\to T}\max|F|^2=+\infty$ or $\lim\limits_{t\to T}\max|h|^2=+\infty$.

Case (1): If $\max|F_0|^2<m$, we can take $R_0=\max|F_0|$. Then $R^2_0<m$ and the solution $F^S:\bbs^{m+p-1}\times[0,T^S)\to\bbr^{m+p}$ of the ODE \eqref{R'} which comes from \eqref{flow'} with the original $m$ being replaced by $m+p-1$ and with $a=\fr{m+p-1}m$, $b=1$,
$c=\fr m{m+p-1}$, where the initial submanifold is an origin-centered standard hypersphere $S_0$ of radius $R_0$, gives a maximal regular solution $R^2\equiv R^2(t):=|F^S|^2$, $0\leq t<T^S$. Furthermore, by Corollary \ref{cor4.6}, we have $T^S=\hat T_1$ where $\hat T_1$ is given by \eqref{t1t2} and $\lim\limits_{t\to \hat T_1} R^2=0$.

On the other hand, for any $R'^2_0\in\left(\max|F_0|^2,\frac{1}{2}(m+\max|F_0|^2)\right)$ and any $\veps\in(0,R'^2_0-\max|F_0|^2)$, we let $R'^2=R'^2(t)$, $t\in[0,T_1)$, be the unique solution of \eqref{R'}, where $T_1$ is given by \eqref{T'} with $R^2_0$ being replaced by $R'^2_0$. So we can use Proposition \ref{prop4.8} to conclude that $|F|^2-R'^2<-\veps$ for all $p\in M^m$ and $t\in[0,T)\cap[0,T_1)$.
Furthermore, by the standard theory of ordinary differential equations, the solution $R'^2=R'^2(t)$ of \eqref{R'} continuously depends on the initial value $R'^2_0$. Therefore, by taking the limit of the inequality as $R'^2_0\to\max|F_0|^2\equiv R^2_0$, we conclude that $|F|^2-R^2\leq 0$ for all $p\in M^m$ and $t\in[0,T)\cap[0,\hat T_1)$ since $\veps\to 0$  and $T_1\to\hat T_1$ as $R'^2_0\to\max|F_0|^2$.

By discussions of the above two paragraphs, we can claim that $T\leq\hat T_1$.
In fact, since the initial submanifold $F_{0}(M^m)$ of the flow \eqref{flow} is bounded by an origin-centered standard hypersphere $S_0$ of radius $R_0$, $F_{t}(M^m)$ will always be bounded by a likewise standard hypersphere $S_t:=F^S(\bbs^{m+p-1})$ of radius $R(t)$
for each $t\in[0,T)$. Therefore, the maximal time interval $[0,T)$ in which the flowing submanifolds $F_t$ keep regular must be within the lifetime $[0,T^S)$ of the hyperspheres $S_t$. Moreover, we have shown that $\lim\limits_{t\to \hat T_1} R^2=0$. Therefore, we can obtain that $T\leq T^S=\hat T_1$.

If $T<\hat T_1$, then we may directly use Proposition \ref{prop4.1} to know that $|F|^2$ is uniformly bounded in the case that $\max|F_0|^2<m$. It thus follows that the only possibility should be that $\lim\limits_{t\to T}\max|h|^2=+\infty$.

If $T=\hat T_1$, then we have
$$
0\leq\lim_{t\to \hat T_1}\max|F|^2\leq\lim_{t\to\hat T_1}R^2=0
$$
implying that $\lim\limits_{t\to\hat T_1}\max|F|^2=0$.

Case (2): If $\min|F_0|^2>m$, we can take $R_0=\min|F_0|$. Then $R^2_0>m$ and the solution $F^S:\bbs^{m+p-1}\times[0,T^S)\to\bbr^{m+p}$ of the ODE \eqref{R'} with the initial submanifold being an origin-centered standard hypersphere $S_0$ of radius $R_0$, defines a maximal regular solution $R^2\equiv R^2(t):=|F^S|^2$, $0\leq t<T^S$. Furthermore, by Corollary \ref{cor4.6}, we have $T^S=\hat T_2$ with $\hat T_2$ being given by \eqref{t1t2} and $\lim\limits_{t\to \hat T_2} R^2=+\infty$.

On the other hand, for any $R'^2_0\in\left(\frac{1}{2}(m+\min|F_0|^2),\min|F_0|^2\right)$ and any $\veps\in(0,\min|F_0|^2-R'^2_0)$, there exists the unique solution $R'^2=R'^2(t)$, $t\in[0,T_2)$, of \eqref{R'}, where $T_2$ is defined by \eqref{T'} with $R^2_0=R'^2_0$. Then Proposition \ref{prop4.8} makes it sure that $|F|^2-R'^2>\veps$ for all $p\in M^m$ and $t\in[0,T)\cap[0,T_2)$.
Furthermore, since the solution $R'^2=R'^2(t)$ of \eqref{R'} continuously depends on the initial value $R'^2_0$, we can take the limit of the inequality as $R'^2_0\to\min|F_0|^2\equiv R^2_0$ to conclude that $|F|^2-R^2\geq 0$ everywhere on $M^m\times\big([0,T)\cap[0,\hat T_2)\big)$, since $\veps\to 0$ and $T_2\to\hat T_2$, as $R'^2_0\to\min|F_0|^2$.

Similar to Case (1), we can show that $T\leq\hat T_{2}$ by using a similar argument.

If $T<\hat T_2$, then we may directly know from Theorem \ref{thm5.1} that either $\lim\limits_{t\to T}\max|F|^2=+\infty$ or
$\lim\limits_{t\to T}\max|h|^2=+\infty$.

If $T=\hat T_2$, then
$$
\lim_{t\to\hat T_2}\min|F|^2\geq\lim_{t\to\hat T_2}R^2=+\infty,
$$
from which we obtain the conclusion that $\lim\limits_{t\to\hat T_2}\min|F|^2=+\infty$.
\end{proof}

\end{document}